\newcommand*{\MRref}[2]{ \href{http://www.ams.org/mathscinet-getitem?mr=#1}{MR #1}}
\newcommand*{\arxiv}[1]{\href{http://www.arxiv.org/abs/#1}{arXiv: #1}}
\numberwithin{equation}{section}
\theoremstyle{plain}
\newtheorem{theorem}[equation]{Theorem}
\newtheorem{lemma}[equation]{Lemma}
\newtheorem{proposition}[equation]{Proposition}
\newtheorem{corollary}[equation]{Corollary}
\theoremstyle{definition}
\newtheorem{definition}[equation]{Definition}
\newtheorem{notations}[equation]{Notations}
\theoremstyle{remark}
\newtheorem{remark}[equation]{Remark}
\newtheorem{example}[equation]{Example}
\DeclareMathOperator{\Aut}{Aut}
\DeclareMathOperator{\Rep}{Rep}
\DeclareMathOperator{\Ind}{Ind}
\DeclareMathOperator{\supp}{\mathrm{supp}}
\DeclareMathOperator{\id}{\mathrm{id}}
\DeclareMathOperator{\Inf}{Inf}
\DeclareMathOperator{\im}{Im}
\newcommand*{\nb}{\nobreakdash}
\newcommand*{\Star}{\(^*\)\nobreakdash-}
\newcommand*{\dd}{\,\mathrm{d}}
\newcommand*{\Lb}{\mathcal L}
\newcommand*{\K}{\mathcal K}
\newcommand{\cZ}{\mathcal{Z}}
\newcommand*{\Cst}{C^*}
\newcommand*{\cont}{C}
\newcommand*{\contz}{\cont_0}
\newcommand*{\contc}{\cont_c}
\newcommand*{\contb}{\cont_b}
\newcommand*{\M}{\mathcal M}
\newcommand*{\Id}{\textup{id}}
\newcommand*{\Ad}{\textup{Ad}}
\newcommand*{\U}{\mathcal U}
\newcommand*{\E}{\mathcal E}
\newcommand*{\EE}{\mathbb E}
\newcommand*{\X}{\mathcal X}%
\newcommand*{\defeq}{\mathrel{\vcentcolon=}}
\newcommand*{\congto}{\xrightarrow\sim}
\newcommand*{\braket}[2]{\langle#1\!\mid\!#2\rangle}
\newcommand*{\bbraket}[2]{\mathopen{\langle\!\langle}#1\!\mid\!#2\mathclose{\rangle\!\rangle}}
\newcommand*{\sbe}{\subseteq} 
\newcommand*{\F}{\mathcal F}
\newcommand*{\D}{\mathcal D}
\newcommand*{\cstar}{\texorpdfstring{$C^*$\nobreakdash-\hspace{0pt}}{*-}}
\newcommand*{\into}{\hookrightarrow}
\newcommand*{\onto}{\twoheadrightarrow}
\newcommand*{\red}{r}
\newcommand*{\un}{u}
\newcommand*{\pn}{\mathrm{\mu}}
\newcommand*{\dual}[1]{\widehat{#1}}
\newcommand*{\dualG}{\widehat{G}}
\newcommand*{\dualGN}{\widehat{G/N}}
\newcommand*{\dualalpha}{\widehat{\alpha}}
\newcommand*{\twu}{\omega}
\newcommand*{\twh}{\varsigma}
\newcommand{\Zt}{\mathcal{Z}} 
\newcommand*{\pt}[1]{\dot{#1}}
\newcommand*{\tw}{\upsilon}
\newcommand*{\st}{\mathrm{st}}
\newcommand*{\bs}{\backslash}
\newcommand{\ie}{\emph{i.e.}}
\newcommand{\eg}{\emph{e.g.}}
\newcommand{\cf}{\emph{cf.}}
\begin{document}
\title[Mansfield's Imprimitivity and twisted Landstad Duality]{Weakly proper group actions, Mansfield's imprimitivity and twisted Landstad duality}

\author{Alcides Buss}
\email{alcides.buss@ufsc.br}
\address{Departamento de Matem\'atica\\
 Universidade Federal de Santa Catarina\\
 88.040-900 Florian\'opolis-SC\\
 Brazil}

\author{Siegfried Echterhoff}
\email{echters@uni-muenster.de}
\address{Mathematisches Institut\\
 Westf\"alische Wilhelms-Universit\"at M\"un\-ster\\
 Einsteinstr.\ 62\\
 48149 M\"unster\\
 Germany}

\begin{abstract}
Using the theory of weakly proper actions of locally compact groups recently developed by the authors, we give a unified proof of both reduced and maximal versions of Mansfield's Imprimitivity Theorem and obtain a general version of Landstad's Duality Theorem for twisted group coactions. As one application, we obtain the stabilization trick for arbitrary twisted coactions, showing that every twisted coaction is Morita equivalent to an inflated coaction.
\end{abstract}

\subjclass[2010]{46L55, 22D35}

\keywords{weakly proper group action, generalized fixed-point algebra, Mansfield Imprimitivity Theorem,
exotic crossed product, twisted group coactions, Landstad Duality}

\thanks{Supported by Deutsche Forschungsgemeinschaft  (SFB 878, Groups, Geometry \& Actions) and by CNPq (Ciências sem Fronteira) -- Brazil.}

\maketitle

\section{Introduction}
\label{sec:introduction}

The main goal of this paper is to show that the theory of weakly proper actions of locally compact groups developed by the authors in \cite{Buss-Echterhoff:Exotic_GFPA,Buss-Echterhoff:Imprimitivity}
can be used to give unified proofs  and/or generalizations of some of the central results  about (twisted) coactions of groups.
More specifically, we want to explore Mansfield's Imprimitivity Theorem (and its generalizations) as well as Landstad Duality for twisted coactions of groups from the point of view of the theory of weakly proper actions and their generalized fixed-point algebras.

In \cite{Mansfield:Induced} Mansfield proved his main result, today called Mansfield's Imprimitivity Theorem, which says that for a (reduced) coaction $\delta\colon B\to \M(B\otimes C^*_\red(G))$ of
a locally compact group $G$ on a \cstar{}algebra $B$ and an amenable normal closed subgroup $N\sbe G$, the crossed product $B\rtimes_{\delta|}\dual{G/N}$ by the restricted coaction $\delta|\colon B\to \M(B\otimes C^*_\red(G/N))$ of $G/N$ is Morita equivalent to $B\rtimes_\delta\dualG\rtimes_{\dual\delta}N$, the crossed product by the dual $N$-action $\dual\delta$.
The bimodule implementing this equivalence is obtained as a certain completion of a special dense \Star{}subalgebra
$\D\sbe B\rtimes_\delta\dualG$, often called {\em Mansfield subalgebra}. Over time, several authors -- see \cite{anHuef-Raeburn:Mansfield,Kaliszewski-Quigg:Mansfield,Kaliszewski-Quigg:Imprimitivity} -- generalized Mansfield's theorem in different directions by allowing non-amenable and even non-normal closed subgroups of $G$ in combination with
different classes of coactions including  {\em full} normal or maximal coactions of $G$ (the word "full" means that we consider
  coactions of the full group \cstar{}algebra $C^*(G)$). We should emphasize that the theory of "full normal" coactions is equivalent to the theory of coactions
  by the reduced group algebra $C_r^*(G)$ (see \cite{Quigg:FullAndReducedCoactions}).

The  version of Mansfield's theorem for normal coactions can be obtained from the theory of Rieffel proper actions (\cite{Rieffel:Proper,Rieffel:Integrable_proper}) by proving that the dual action of $N$ on $B\rtimes_\delta \dualG$
is proper (in Rieffel's sense) with respect to Mansfield's subalgebra $\D\sbe B\rtimes_\delta\dualG$. Indeed, this fact has been first observed  in Mansfield's original paper
(see \cite[\S 7]{Mansfield:Induced}) and  it has been used to obtain generalizations of Mansfield's Imprimitivity Theorem to non-normal and/or non-amenable subgroups
in  \cite{Kaliszewski-Quigg:Imprimitivity, Huef-Kaliszewski-Raeburn-Williams:Fixed,anHuef-Raeburn:Mansfield}. On the other hand, the maximal version of Mansfield's theorem (obtained in \cite{Kaliszewski-Quigg:Mansfield}) is proved in an indirect way by
analysing relations between several imprimitivity bimodules (such as Green's imprimitivity bimodule and Katayama's bimodule).

One of our goals in this paper is to show that both, the maximal and normal versions of Mansfield's Imprimitivity Theorem can be obtained
by considering full or reduced generalized fixed-point algebras for appropriate weakly proper actions.
While the reduced generalized fixed-point algebras have been introduced by Rieffel in the 1980's (\cite{Rieffel:Proper}), the theory of full fixed-point algebras has been introduced only recently in the quite general situation of weakly proper $G$-algebras  by the authors in
\cite{Buss-Echterhoff:Exotic_GFPA}.
Recall that a $G$-action $\alpha$ on a \cstar{}algebra $A$ is called weakly proper if there is a proper $G$-space
$X$ and a $G$-equivariant nondegenerate \Star{}homomorphism $\contz(X)\to \M(A)$.
We then call $A$  a weakly proper $X\rtimes G$-algebra (or just a weak $X\rtimes G$-algebra).
 For such algebras we  constructed in \cite{Buss-Echterhoff:Exotic_GFPA} a Hilbert module
 $\F_\pn(A)$ over the $\pn$-crossed product $A\rtimes_{\alpha,\pn}G$ for any given crossed-product norm $\|\cdot\|_\pn$ on $\contc(G,A)$ which lies
 between the reduced  crossed-product norm $\|\cdot\|_\red$ and the maximal crossed-product norm $\|\cdot\|_\un$.
The algebra of compact operators $A^G_\pn=\K(\F_\pn(A))$ is a completion of the {\em generalized fixed-point algebra with compact supports:}
\begin{equation}\label{eq:GFPAwithCompSupp}
A^G_c=\contc(G\bs X)\cdot \{m\in \M(A)^G: m\cdot \contc(X), \contc(X)\cdot m\sbe A_c\}\cdot \contc(G\bs X),
\end{equation}
where $A_c=\contc(X)\cdot A\cdot \contc(X)$ and $\M(A)^G$ denotes the
algebra of $G$-fixed points in the multiplier algebra $\M(A)$.
If the action of $G$ on $X$ is free and proper,  $\F_\pn(A)$ implements a Morita equivalence $A^G_\pn\sim A\rtimes_{\alpha,\pn}G$.

Given a $G$-coaction $(B,\delta)$, the crossed product $B\rtimes_\delta \dualG$ may be viewed as a weak $G\rtimes G$-algebra in a canonical
way by taking the dual $G$-action $\dual\delta$ and the canonical homomorphism $j_B\colon\contz(G)\to \M(B\rtimes_\delta\dualG)$,
where $X=G$ is endowed with the  right translation action of $G$.
In particular, if $H$ is a closed subgroup of $G$, we may restrict the $G$-action to $H$ and view $B\rtimes_\delta\dualG$ as a weak
$G\rtimes H$-algebra. Therefore, by the general theory of weakly proper actions explained above, we get a Hilbert bimodule
$\F^H_\pn(B\rtimes_\delta \dualG)$ implementing a Morita equivalence
$(B\rtimes_\delta\dualG)^H_\pn\sim B\rtimes_\delta\dualG\rtimes_{\dual\delta,\pn}H$ for any crossed-product norm
$\|\cdot\|_\pn$ on $\contc(H,B\rtimes_\delta\dualG)$.
The only remaining point to get Mansfield's theorem is to suitably identify the fixed point algebra
$(B\rtimes_\delta\dualG)^H_\pn$ with a sort of "crossed product" $B\rtimes_{\delta|,\pn}\dual{G/H}$
by the homogeneous space $G/H$ whenever this is defined. In fact, we prove that if $N$ is a
normal closed subgroup of $G$ and $\pn=\un$ or $\pn=\red$ denotes either the maximal or
reduced crossed-product norm (for both groups $G$ and $N$), then $(B\rtimes_\delta\dualG)^N_\pn$
is indeed isomorphic to the  crossed product $B_\pn\rtimes_{\delta_\pn|}\dual{G/N}$ by the restricted coaction,
where $(B_\pn,\delta_\pn)$ denotes either the maximalization (for $\pn=\un$) or the
normalization (for $\pn=\red$) of $(B,\delta)$. For non-normal subgroups $H\sbe G$, it follows almost by definition
that $(B\rtimes_\delta\dualG)^H_\red$
identifies with the crossed product $B_\red\rtimes_{\delta,\red}\dual{G/H}$ as defined in
\cite{Echterhoff-Kaliszewski-Raeburn:Crossed_products_dual}. This has been observed before in
\cite{anHuef-Raeburn:Mansfield}*{Theorem~3.1} and \cite{Huef-Kaliszewski-Raeburn-Williams:Naturality_Rieffel}*{Proposition~5.2}.
Our results indicate that it would be useful to {\em define} the {\em full crossed product}
$B\rtimes_{\delta,\un}\dual{G/H}$ of $B$ by the restriction of a $G$-coaction $\delta$ to the homogeneous space $G/H$
 as the maximal fixed-point algebra  $(B\rtimes_\delta\dualG)^H_\un$. Our
Morita equivalence $(B\rtimes_\delta\dualG)^H_\un\sim  (B\rtimes_\delta\dualG)\rtimes_{\widehat{\delta}, \un}H$  then automatically provides a
 full version of Mansfield's Imprimitivity Theorem for crossed products by   $G/H$.

We should stress that our results are completely independent from Mansfield's original ideas of constructing certain
dense subalgebras $\D_H$ and $\D$ of $B\rtimes_{\delta, r}\dual{G/H}$ and $B\rtimes_\delta\dualG$, respectively.
Nevertheless, we show that our results are compatible with Mansfield's constructions by showing that
the algebra $\D_H$ of Mansfield lies inductive limit dense in the fixed-point algebra $(B\rtimes_{\delta}\dualG)_c^H$, and hence also dense in any fixed-point algebra $(B\rtimes_\delta\dualG)^H_\pn$.
In particular, this implies that one also obtains the full fixed-point algebra $(B\rtimes_{\delta}\dualG)_\un^H$ as
a certain completion of $\D_H$.

Mansfield's theorem motivated Phillips and Raeburn to introduce twisted coactions of groups in \cite{Phillips.Raeburn:Twisted}.
If $N\sbe G$ is a closed normal subgroup of $G$, a twist over $G/N$ for a (full) coaction $(B,\delta)$ of $G$ is a unitary corepresentation $\twu\in \M(B\otimes C^*(G/N))$ of $G/N$
such that the restriction $\delta|$ of $\delta$ to $G/N$ is implemented by conjugation with $\twu$ and such that $\delta$ coacts trivially on the first leg of $\twu$.
For each such twisted coaction, one can form the twisted crossed product $B\rtimes_{\delta,\twu}\dualG$ which is the quotient of the untwisted crossed product
$B\rtimes_\delta\dualG$ by a certain
 (twisting) ideal.
Another main result of this paper will be a Landstad Duality Theorem for twisted coactions of groups which will, in particular,
 provide us with the notion of {\em maximalizations} of twisted coactions:
we prove that for a weak $G\rtimes N$-algebra $A$,
there is a twisted coaction $(\delta^N_\pn,\twu^N_\pn)$ on $A^N_\pn$ -- for suitable crossed-product norms $\pn$ on $C_c(N,A)$ -- and a natural isomorphism
 $A^N_\pn\rtimes_{\delta_\pn,\twu_\pn}\dualG\cong A$ of weak $G\rtimes N$-algebras.
Conversely, if we start with any given twisted coaction $(\delta, \twu)$, the twisted crossed product
$A:=B\rtimes_{\delta,\twu}\dualG$ carries a canonical structure as weak $G\rtimes N$-algebra, and
for suitable crossed-product norms $\|\cdot\|_\pn$ on $C_c(N,B\rtimes_{\delta,\twu}\dualG)$ we obtain
a twisted $(G,G/N)$-coaction $(\delta^N_\pn,\twu^N_\pn)$ on  $B^N_\pn:=(B\rtimes_{\delta,\twu}\dualG)_\pn^N$ such that
$\F_\pn^N(B\rtimes_{\delta,\twu}\dualG)$ implements a  Morita equivalence
between $B_\pn^N$ and $B\rtimes_{\delta,\twu}\dualG\rtimes_{{\dual\delta,}\pn} N$ which provides a version
of Katayama duality for such twisted cosystems.  We then show:

\begin{enumerate}

\item Given an arbitrary twisted coaction $(B,\delta,\twu)$ there exists a unique norm $\|\cdot\|_\pn$ on $C_c(N, B\rtimes_{\delta,\twu}\dualG)$
such that $(B,\delta,\twu)\cong (B_\pn^N,\delta_\pn^N, \twu_\pn)$.  In particular, $(B,\delta,\twu)$ satisfies the above version of  Katayama duality
for $\|\cdot\|_\pn$. Moreover, we show that
$(B,\delta,\twu)$ is Morita equivalent to the trivially twisted inflated bidual coaction $(\Inf\dual{\dual\delta}_\pn, 1)$ on $B\rtimes_{\delta,\twu}\dualG\rtimes_{{\dual\delta,}\pn} N$.
This gives the \emph{stabilization trick for arbitrary twisted coactions} extending the main result of  \cite{Echterhoff-Raeburn:The_stabilisation_trick} where the stabilization trick was shown for amenable $N$.

\item There are canonical epimorphisms $B_u^N\onto B\onto B_\red^N$ which are equivariant  for the twisted coactions
$(\delta^N_\un,\twu^N_\un)$, $(\delta,\twu)$, and $(\delta^N_\red,\twu^N_\red)$, respectively, such that the resulting homomorphisms
$$B_u^N\rtimes_{\delta_u^N,\twu_u}\dualG\onto B\rtimes_{\delta,\twu}\dualG\onto B_\red^N\rtimes_{\delta_\red^N,\twu_\red}\dualG$$
are isomorphisms of weakly proper $G\rtimes N$-algebras.
\end{enumerate}

By the previous discussion we already know that the twisted cosystem $ (B_\un^N,\delta_\un^N, \twu_\un)$ satisfies full Katayama duality
$B_\un^N\sim_M (B_u^N\rtimes_{\delta_u^N,\twu_u}\dualG)\rtimes_uN$ and the twisted cosystem
 $ (B_\red^N,\delta_\red^N, \twu_\red)$ satisfies reduced Katayama duality
$B_\red^N\sim_M (B_\red^N\rtimes_{\delta_\red^N,\twu_\red}\dualG)\rtimes_\red N$. They  therefore
give twisted analogues of a {\em maximalization} and a {\em normalization} of the  coaction
$(\delta,\twu)$, thus extending  similar concepts for ordinary coactions as introduced in \cite{Echterhoff-Kaliszewski-Quigg:Maximal_Coactions}
and \cite{Quigg:FullAndReducedCoactions}  and we obtain complete twisted analogues of the  results on exotic coactions
obtained in \cite{Buss-Echterhoff:Exotic_GFPA}.

The outline of the paper is as follows: after a short preliminary section (\S\ref{sec-prel}) we prove in \S \ref{sec-iterated-fixed}
some useful general results on fixed-point algebras including  a theorem on iterated fixed-point algebras for
normal subgroups: if $A$ is a weakly proper $X\rtimes G$-algebra and $N$ is a closed normal subgroup of $G$, then
$A_u^G\cong (A_u^N)_u^{G/N}$ and similarly for the reduced fixed-point algebra. This result will give the main tool for
proving our versions of Mansfield's theorem in \S \ref{sec-Mansfield}. The results on twisted  Landstad Duality and
maximalizations and normalizations for twisted coactions are given in the final section \S \ref{sec:Twisted-Landstad}.
As one application, we  give a new proof of the decomposition theorem
$B\rtimes_{\delta}\dualG\cong \big(B\rtimes_{\delta|}\dual{G/N}\big)\rtimes_{\tilde\delta,\tilde\twu}\dualG$
of Phillips and Raeburn in \cite{Phillips.Raeburn:Twisted}.

\section{Preliminaries}\label{sec-prel}
To fix notation and for reader's convenience we recall in this section some basic definitions and constructions from \cite{Buss-Echterhoff:Exotic_GFPA, Buss-Echterhoff:Imprimitivity}
about weakly proper actions and their generalized fixed-point algebras as well as some notations on  coactions that will be needed in this paper.

Let $G$ be a locally compact group and let $A$ be a $G$-algebra, that is, a \cstar{}algebra endowed with a (strongly continuous) $G$-action $\alpha\colon G\to \Aut(A)$.
We endow $\contc(G,A)$ with the usual \Star{}algebra structure by
$$f*g(t)\defeq \int_G f(s)\alpha_s(g(s^{-1}t))\dd{s},\quad f^*(t)\defeq \Delta(t)^{-1}\alpha_t(f(t^{-1})^*),$$
where $\Delta$ denotes the modular function of $G$. The full and reduced crossed product, denoted $A\rtimes_\alpha G$ and $A\rtimes_{\alpha,\red}G$, respectively, are also defined in the usual way as completions of $\contc(G,A)$ with respect to the universal and reduced \cstar{}norms $\|\cdot\|_\un$ and $\|\cdot\|_\red$, respectively  -- the latter is defined in terms of the regular representation $\Lambda\colon \contc(G,A)\to \Lb(L^2(G,A))$.
More generally, we call a \emph{crossed-product norm} any \cstar{}norm $\|\cdot\|_\pn$ between the full and reduced norm and write $A\rtimes_{\alpha,\pn}G$ for the corresponding \cstar{}algebra completion, sometimes called an \emph{exotic crossed product}. Certain special exotic crossed product of this type have been constructed in \cite{Kaliszewski-Landstad-Quigg:Exotic}: they are associated to crossed-product norms coming from $G$-invariant ideals in the Fourier-Stieltjes algebra $B(G)$ of $G$.

Let now $X$ be a locally compact Hausdorff $G$-space with left $G$-action $G\times X\to X$, $(t,x)\mapsto t\cdot x$. We usually denote
by $\tau: G\to\Aut(C_0(X))$ the corresponding action given by $\big(\tau_t(f)\big)(x)=f(t^{-1}\cdot x)$.
An important special situation will be the case where $X=G$ is endowed with (right) translation $G$-action $t\cdot g:=gt^{-1}$.

By a \emph{weak $X\rtimes G$-algebra} we mean a \cstar{}algebra $A$ endowed with a $G$-action $\alpha$ and a $G$-equivariant nondegenerate
\Star{}homomorphism $\phi\colon\contz(X)\to \M(A)$.  We often write  $f\cdot a\defeq \phi(f)a$ and $a\cdot f\defeq a\phi(f)$ for  $f\in \contz(X)$, $a\in A$.
We are mainly interested in the situation where $X$ is a proper $G$-space, in which case we say that $A$ is a weakly proper $X\rtimes G$-algebra
(notice that these actions are also proper in Rieffel's sense \cite{Rieffel:Proper} by \cite{Rieffel:Integrable_proper}*{Theorem~5.7}).
 Recall that the $G$-action on $X$ is proper if and only if for every compact subsets $K,L\sbe X$, the set $\{t\in G: t\cdot K\cap L\not=\emptyset\}$ is compact in $G$. In this situation, the space $\F_c(A)\defeq \contc(X)\cdot A$ can be endowed with a canonical structure of a pre-Hilbert module
over $\contc(G,A)\sbe A\rtimes_{\alpha,\pn}G$ (for any crossed-product norm $\|\cdot\|_\pn$) with inner product and right $\contc(G,A)$-action given by
$$\bbraket{\xi}{\eta}_{\contc(G,A)}|_t\defeq \Delta(t)^{-1/2}\xi^*\alpha_t(\eta),\quad \xi*f=\int_G\Delta(t)^{-1/2}\alpha_t(\xi\cdot f(t^{-1}))\dd{t}$$
for all $\xi,\eta\in \F_c(A)$, $f\in \contc(G,A)$ and $t\in G$. The Hilbert $A\rtimes_{\alpha,\pn}G$-module completion of $\F_c(A)$ is denoted by $\F_\pn(A)$ (sometimes also
$\F_\pn^G(A)$ if it is important to keep track of the group $G$).
The \cstar{}algebra of compact operators on $\F_\pn(A)$ can be canonically identified with a completion $A^G_\pn$ of the generalized fixed-point algebra with compact supports $A^G_c$ (see \eqref{eq:GFPAwithCompSupp})
via the (left) $A^G_c$-valued inner product and left action on $\F_c(A)$ given by
$$_{A^G_c}\bbraket{\xi}{\eta}\defeq \EE(\xi\eta^*)=\int_G^\st\alpha_t(\xi\eta^*)\dd{t},\quad a\cdot \xi\defeq a\xi \mbox{ (multiplication in $\M(A)$)},$$
where $\EE(a)\defeq \int_G^\st\alpha_t(a)\dd{t}$ denotes the strict (unconditional) integral (\cite{Exel:Unconditional})
whenever this makes sense (which is the case for elements $a=\xi\eta^*\in A_c\defeq \contc(X)\cdot A\cdot\contc(X)$).
We shall write $\EE^G$ if it is important to keep track of the group in the notation.
Note that in this construction $\F_c(A)$ becomes a (partial) $A_c^G$--$C_c(G,A)$-pre-imprimitivity bimodule in which all
possible pairings are jointly continuous with respect to the respective inductive limit topologies (see
\cite[Definition 2.11]{Buss-Echterhoff:Exotic_GFPA} for the definition of these topologies). This is shown in
 \cite[Lemma 2.12]{Buss-Echterhoff:Exotic_GFPA} and the proof of that lemma also shows that the $\EE:A_c\to A_c^G$ is  inductive limit continuous as well.

If the above construction is applied to the special case where $A=\contz(X)$, we obtain a $C_0(X)\rtimes_\tau G$-Hilbert module
$\F(X)\defeq \F(\contz(X))$ with algebra of compact operators isomorphic to $\contz(G\bs X)$.
Recall that $\contz(X)\rtimes_{\tau}G\cong \contz(X)\rtimes_{\tau,\red}G$, that is, there is only one crossed-product norm $\pn=\un=\red$ on $\contc(G,\contz(X))$ because the action is proper.
This is definitely not the case in general: every exotic crossed product appears as an (exotic) $\pn$-generalized fixed point algebra $A^G_\pn$ (see \cite{Buss-Echterhoff:Imprimitivity}*{Corollary~3.25}).
The relation between $\F(X)$ and $\F_\pn(A)$ is given by the (balanced) tensor product decomposition (obtained in \cite{Buss-Echterhoff:Exotic_GFPA}*{Proposition~2.9}):
$$\F_\pn(A)\cong \F(X)\otimes_{\contz(X)\rtimes_\tau G}(A\rtimes_{\alpha,\pn}G).$$
In particular, if the action on $X$ is free,
 this decomposition implies that $\F_\pn(A)$ is full as a right Hilbert $A\rtimes_{\alpha,\pn}G$-module and hence may be viewed as an imprimitivity bimodule between
$A^G_\pn$ and  $A\rtimes_{\alpha,\pn}G$.
One special class of weakly proper actions where the above theory of generalized fixed point algebras can be successfully applied comes from crossed products by
group coactions. Recall that a (full) coaction of $G$ on a \cstar{}algebra $B$ is a nondegenerate \Star{}homomorphism $\delta\colon B\to \M(B\otimes C^*(G))$ satisfying $(\delta\otimes\id)\circ\delta=(\id\otimes\delta_G)\circ\delta$
and such that $\delta(B)(1\otimes C^*(G))=B\otimes C^*(G)$ ("nondegeneracy" of the coaction), where $\delta_G\colon C^*(G)\to \M(C^*(G)\otimes C^*(G))$, $\delta_G(u_t)=u_t\otimes u_t$ for all $t\in G$, denotes the comultiplication of $C^*(G)$ and $G\ni t\mapsto u_t\in \M(C^*(G))$ denotes the universal representation. Although we have mentioned reduced coactions
 in the introduction, meaning (injective) coactions modelled on $C^*_\red(G)$ in place of $C^*(G)$, we only work with full coactions in the main body of this paper.

Given a coaction $(B,\delta)$, one can assign the crossed product $B\rtimes_\delta\dualG$ which is endowed with a universal covariant representation pair $j_B\colon B\to \M(B\rtimes_\delta\dualG)$ and $j_G\colon \contz(G)\to \M(B\rtimes_\delta\dualG)$ in such a way that elements of the form $j_B(b)j_G(f)$ linearly span a dense subspace of $B\rtimes_\delta\dualG$. By a covariant representation we mean a pair of (nondegenerate) \Star{}homomorphisms
$\pi,\sigma\colon B,\contz(G)\to \M(D)$, for some \cstar{}algebra $D$, satisfying $$(\pi\otimes\id)(\delta(b))=(\sigma\otimes\id)(\omega_G)(\pi(a)\otimes 1)(\sigma\otimes\id)(\omega_G)^*\quad\mbox{for all }b\in B,$$
where $\omega_G\in \M(\contz(G)\otimes C^*(G))$ is the unitary represented by the function $t\mapsto u_t$. The universality of $(j_B,j_G)$ means that any such pair $(\pi,\sigma)$ gives rise to a (unique) nondegenerate \Star{}homomorphism $\pi\rtimes\sigma\colon B\rtimes_\delta\dualG\to \M(D)$ with $(\pi\rtimes\sigma)\circ j_B=\pi$ and $(\pi\rtimes\sigma)\circ j_G=\sigma$. The theory of crossed products by coactions turns out to be "amenable", in the sense that the regular representation of $B\rtimes_\delta\dualG$ into $\M(B\otimes\K(L^2G))$ given by the covariant pair $(\pi,\sigma)=((\id\otimes\lambda)\circ\delta, 1\otimes M)$, where $\lambda$ denotes the left regular representation of $G$ and $M$ the representation of $\contz(G)$ by multiplication operators, is faithful for every coaction. In other words, $B\rtimes_\delta\dualG$ may be identified with the image of the regular representation in $\M(B\otimes\K(L^2G))$. On the other hand, the representation $j_B$
need not be faithful in general (as happens for some dual coactions on full crossed products by actions of non-amenable $G$). A coaction is said to be \emph{normal} if $j_B$ is injective.

The crossed product $B\rtimes_\delta\dualG$ carries a \emph{dual action} $\dual\delta$ of $G$ given on generators by the formula:
$$\dual\delta_t(j_B(b)j_G(f))=j_B(b)j_G(\tau_t(f)),$$
where $\tau$ denotes the right translation action of $G$ on itself: $\tau_t(f)|_s=f(st)$. This action turns $j_G$ into a $G$-equivariant homomorphism and therefore enriches $A=B\rtimes_\delta\dualG$
with the structure of a weakly proper $G\rtimes G$-algebra. The double (full) crossed product $B\rtimes_\delta\dualG\rtimes_{\dual\delta}G$ is, in general, not
isomorphic to $B\otimes\K(L^2G)$, but there is a canonical surjection
$$\Phi\colon B\rtimes_\delta\dualG\rtimes_{\dual\delta}G\onto B\otimes\K(L^2G)$$
which is defined as the integrated form $\Phi\defeq (\pi\rtimes\sigma)\rtimes (1\otimes \rho)$, where $(\pi,\sigma)=((\id\otimes\lambda)\circ\delta, 1\otimes M)$ is the regular covariant representation of $(B,\delta)$ and $\rho$ denotes the right regular representation of $G$ on $L^2G$. The coaction $(B,\delta)$ is called \emph{maximal} if $\Phi$ is an isomorphism. Maximal coactions are exactly those which are Morita equivalent to dual coactions on full crossed products by actions. In general, there is a (unique, up isomorphism) \emph{maximalization} $(B_\un,\delta_\un)$ of $(B,\delta)$ which is a maximal coaction together with an equivariant surjection $B_\un\to B$ inducing an isomorphism $B_\un\rtimes_{\delta_\un}\dualG\congto B\rtimes_\delta\dualG$ (of weak $G\rtimes G$-algebras). Similarly, there is a \emph{normalization} $(B_\red,\delta_\red)$ of $(B,\delta)$, that is, a normal coaction with an equivariant surjection $B\to B_\red$ inducing an isomorphism $B\rtimes_\delta\dualG\cong B_\red\rtimes_{\delta_\red}\dualG$ (of weak $G\rtimes G$-algebras) in such a  way that the canonical surjection $\Phi$ factors through an isomorphism
 $$\Phi_\red\colon B\rtimes_\delta\dualG\rtimes_{\dual\delta,\red}G\congto B_\red\otimes\K(L^2G).$$
More generally, $\Phi$ determines a crossed-product norm $\|\cdot\|_\pn$ on $\contc(G,A)$ for $A=B\rtimes_\delta\dualG$ in such a way that $\Phi$ factors through an isomorphism
$$\Phi_\pn\colon B\rtimes_\delta\dualG\rtimes_{\dual\delta,\pn}G\onto B\otimes\K(L^2G).$$
It follows from \cite{Buss-Echterhoff:Exotic_GFPA}*{Theorem~4.6} that there is a coaction $\delta^G_\pn$ on the $\pn$-generalized fixed-point algebra $A^G_\pn$
and Corollary~4.7 in \cite{Buss-Echterhoff:Exotic_GFPA} says that $(B,\delta)\cong (A^G_\pn,\delta^G_\pn)$. In this situation we say that $(B,\delta)$ is a $\pn$-coaction,
or that it satisfies $\pn$-duality (which is implemented by $\Phi_\pn$). In particular, a coaction is maximal (resp. normal) if and only if $(B,\delta)\cong (A^G_\un,\delta^G_\un)$ (resp. $(B,\delta)\cong (A^G_\red,\delta^G_\red)$).

Summarizing, we may recover every coaction of $G$ as a coaction of the form $(A^G_\pn,\delta^G_\pn)$ for some weak $G\rtimes G$-algebra $A$.
Moreover, for crossed-product norms associated to ideals in $B(G)$ as in \cite{Kaliszewski-Landstad-Quigg:Exotic}, the assignment $A\mapsto (A^G_\pn,\delta^G_\pn)$ is an equivalence between the categories of weak $G\rtimes G$-algebras and $G$-coactions satisfying $\pn$-duality (see \cite{Buss-Echterhoff:Exotic_GFPA}*{Theorem~7.2}).

In this paper we extend these results and describe the category of weak $G\rtimes N$-algebras, for $N$ a closed normal subgroup of $G$, in terms of coactions of $G$ twisted over $G/N$. This will be done in Section~\ref{sec:Twisted-Landstad}, where we review the definition of twisted coactions and derive the relevant results.

\section{Green twisted actions and iterated fixed-point algebras}\label{sec-iterated-fixed}

Let $G$ be a locally compact group and $N\sbe G$ a normal closed subgroup.
In most of this section $\|\cdot\|_\pn$ will denote either the maximal or reduced crossed-product norm.

Let $(B,\beta)$ be a $G$-algebra. Recall that a (Green) \emph{twist} for $\beta$ is a strictly continuous group homomorphism $\tw\colon N\to \U\M(A)$ satisfying
$$\alpha_n(a)=\tw_na\tw_{n^{-1}}\quad\mbox{and}\quad\alpha_t(\tw_n)=\tw_{tnt^{-1}}\quad\forall t\in G, n\in N.$$
In this case we also say that $(\beta,\tw)$ is a (Green) {\em twisted action} of $(G,N)$ on a \cstar{}algebra $B$, or that $(B,\beta,\tw)$ is a $(G,N)$-algebra.
If $\tw$ is the trivial twist, that is, $\tw_n=1$ for all $n$, then $\beta$ is trivial on $N$ and hence factors through a $G/N$-action $\dot\beta$. Conversely,
If $(B,\pt\beta)$ is a $G/N$-algebra, then we may \emph{inflate} $\pt\beta$ to a $G$-action $\Inf\pt\beta$ on $B$ and this is a $(G,N)$-twisted action with respect to the trivial twist.
Hence, we may view $(G,N)$-algebras as generalizations of $G/N$-algebras.
The maximal twisted crossed product $B\rtimes_{\beta,v}(G,N)$
can be constructed as the universal completion of the convolution algebra $C_c(G, B,v)$ consisting of all
continuous functions $f:G\to B$ with compact supports modulo $N$ which satisfy the relation
$f(ns)=f(s)v_{n^{-1}}$ for all $s\in G, n\in N$. Convolution and involution on  $C_c(G,B,v)$ are defined by
$$f*g(s)=\int_{G/N} f(t)\beta_t(g(t^{-1}s))\,\dd_{G/N} tN\quad\text{and}\quad f^*(s)=\Delta_G(s^{-1})\beta_s(f(s^{-1})^*.$$
 We always choose Haar measures on $G, N$, and $G/N$ in such a way that the formula
\begin{equation}\label{iterated-int}
\int_G \varphi(s)\,\dd_G s=\int_{G/N}\left(\int_N \varphi(sn)\, \dd_N n\right)\dd_{G/N} sN
\end{equation}
holds for all $\varphi\in C_c(G)$.
The nondegenerate \Star{}representations of $B\rtimes_{\beta,v}G$ are in one-to-one correspondence with the
covariant representations $(\pi,U)$ of $(B,\beta)$ which preserve the twist $v$ in the sense that
$\pi(v_n)=U_n$ for all $n\in N$. Any such covariant representation integrates to a \Star{}representation $\pi\rtimes U$ of $C_c(G,B,v)$
by putting
$$\pi\rtimes U(f)=\int_{G/N} \pi(f(s)) U_s\, d_{G/N} sN.$$
The universal norm $\|\cdot\|_u$ on $C_c(G,B,v)$ is then given as $\|f\|_u=\sup_{(\pi,U)}\|\pi\rtimes U(f)\|$ where $(\pi,U)$ runs through
all twisted covariant representations of $(B, \beta, \tw)$. Alternatively, $B\rtimes_{\beta,\tw}(G,N)$ can be obtained as the quotient
of the untwisted crossed product by the ideal
$$I_v:=\cap\{\ker\pi\rtimes U: (\pi,U)\;\text{is a twisted covariant representation of $(B,\beta,v)$}\}.$$
Note that we  have a canonical isomorphism $C_c(G,B,1_N)\cong C_c(G/N,B)$ which induces an isomorphism
$B\rtimes_{\Inf\dot\beta, 1_N}(G,N)\cong B\rtimes_{\dot\beta}G/N$, if $\dot\beta$ is an action of $G/N$ on $B$.
Twisted actions of this kind have been introduced by Phil Green in \cite{Green:Local_twisted} and we refer to his
paper for more details.

If  $(A,\alpha, \tw^\alpha)$ and $(B,\beta, \tw^\beta)$ are two twisted $(G,N)$-algebras then a $(G,N)$-equi\-va\-riant
$A$--$B$-correspondence $(\E,\gamma)$ is a $G$-correspondence $(\E,\gamma)$
between $(A,\alpha)$ and $(B,\beta)$ which preserves the twists in the sense that
$$ \gamma_n(\xi)=\tw^\alpha_n\cdot \xi \cdot \tw^\beta_{n^{-1}} \quad\forall \xi\in \E, n\in N.$$
 If, in addition, $\E$ is an imprimitivity $A$--$B$-bimodule, we say that the twisted actions $(A,\alpha,\tw^\alpha)$ and $(B,\beta,\tw^\beta)$ are Morita equivalent. By the version of the Packer-Raeburn stabilization trick given in \cite{Echterhoff:Morita_twisted},
 we know that  every $(G,N)$-twisted action is Morita equivalent to a twisted $(G,N)$-action with trivial twist, \ie, to a $G/N$-action.

Given a twisted $(G,N)$-action $(B,\beta,\tw)$ and a Hilbert $B,G$-module $(\E,\gamma)$, the \emph{crossed-product module} (or \emph{descent}) $\E\rtimes_{\gamma,\tw} (G,N)$ is the Hilbert $B\rtimes_{\beta,\tw}(G,N)$-module defined as the completion of the space $\contc(G,\E,\tw)$ of all continuous functions $x\colon G\to \E$ with compact support mod $N$ satisfying $x(ns)=x(s)\tw_{n^{-1}}$ for all $s\in G$ and $n\in N$, endowed with the structure of a pre-Hilbert module over $\contc(G,B,\tw)$ given by:
$$x\cdot f|_t\defeq \int_{G/N}x(s)\gamma_s(f(s^{-1}t))\dd_{G/N}(tN),$$
$$\bbraket{x}{y}|_t\defeq \int_{G/N}\beta_{s^{-1}}(\braket{x(s}{y(ts)})\dd_{G/N}(tN).$$
Given a $G$-algebra $(A,\alpha)$, the crossed product $A\rtimes_{\alpha|}N$ (where $\alpha|$ denotes the restriction of $\alpha$ to $N$) carries a twisted $(G,N)$-action $(\tilde\alpha,\iota_N)$ given by $\tilde\alpha_t(f)|_n\defeq \delta(t)\alpha_t(f(t^{-1}nt))$, where $\delta(t)=\frac{\Delta_G(t)}{\Delta_{G/N}(tN)}$ for all $t\in G$, and $\iota_N$ is the canonical homomorphism $N\to \M(A\rtimes N)$, $\iota_N(n)(f)|_s=\alpha_n(f(n^{-1}s)$. Sometimes $(\tilde\alpha,\iota_N)$ is called the \emph{decomposition twisted action} of $(G,N)$ on $A\rtimes_{\alpha|}N$. There is a canonical isomorphism $A\rtimes_\pn N\rtimes_\pn (G,N)\cong A\rtimes_\pn G$ for $\pn=\un$ or $\pn=\red$
(see \cite[Proposition 1]{Green:Local_twisted} and \cite[Proposition 5.2]{Kirchberg-Wassermann:permanence}).
Note that $(\tilde\alpha, \iota_N)$ factors through a twisted action (which we also denote $(\tilde\alpha, \iota_N)$)
 on a given exotic crossed product $A\rtimes_{\alpha|,\pn}N$
if and only if the ideal
$$I_\mu:=\ker(A\rtimes_{\alpha|}N\to A\times_{\alpha|,\pn}N)$$
 is $\tilde\alpha$-invariant. However, it is not clear how crossed-product norms for actions of $N, G$, and $G/N$
 should be related to each other in general to obtain the description of the $G$-crossed products as iterated crossed products.

We shall need twisted actions for the proof that for any weakly proper $X\rtimes G$-algebra
$(A,\alpha,\phi)$ we have a canonical isomorphisms
$$(A_u^N)_u^{G/N}\cong A_u^G \quad\text{and similarly}\quad (A_r^N)_r^{G/N}\cong A_r^G,$$
where $\|\cdot\|_u$ and $\|\cdot\|_r$ denote, as usual, the universal or reduced norms on crossed products by
$G$, $N$, and $G/N$, respectively. Note that in case where $G=N\times H$ is a direct product of groups, this
result has been shown in \cite[Lemma 5.17]{Buss-Echterhoff:Imprimitivity}.
Observe that by restricting the action $\alpha$  to $N$ provides us with the
weakly proper $X\rtimes N$-algebra $(A,\alpha|,\phi)$. We then denote by
$$\EE^N:A_c\to A_c^N; \;\EE^N(a)=\int_N^\st\alpha_s(a)\dd_N(s)$$
the corresponding  surjective "conditional expectation".

\begin{proposition}\label{prop:FixingN}
Suppose that  $(A,\alpha,\phi)$ is a weakly proper $X\rtimes G$-algebra and let $\|\cdot\|_\pn$ be any crossed-product norm
on $C_c(N,A)$ such that the corresponding  ideal $I_\mu\subseteq A\rtimes_{\alpha|}N$
is invariant under the decomposition action $\tilde\alpha$. Then the formula
$$\gamma^N_t(\xi)\defeq \delta(t)^{1/2}\alpha_t(\xi)$$
for $\xi\in \F^N_c(A)$ extends to a $G$-action $\gamma^N:G\to \Aut(\F_\mu^N(A))$ which is compatible with the decomposition $G$-action $\tilde\alpha$ of $G$ on $A\rtimes_{\alpha|, \pn}N$.
The corresponding $G$-action $\alpha^N\defeq\Ad\gamma^N$ on $A^N_\pn\cong \K(\F_u^N(A))$  is given  on the dense subalgebra $A_c^N$ by
the restriction of $\alpha$ to $A_c^N\subseteq\M(A)$ and satisfies the equation
\begin{equation}\label{eq-fixaction}
\alpha^N_t(\EE^N(a))=\alpha_t(\EE^N(a))=\delta(t)\EE^N(\alpha_t(a))\quad\forall a\in A_c, t\in G.
\end{equation}
This action is trivial on $N$ (hence is inflated from an action of $G/N$) and
$(\F_\pn^N(A), \gamma)$ is a correspondence  between the twisted actions $(A_\pn^N, \alpha^N, 1_N)$
and $(A\rtimes_{\alpha|,\pn}N, \tilde\alpha,\iota_N)$.
If  $N$ acts freely on $X$, this correspondence will be a $(G,N)$-equivariant Morita equivalence.
\end{proposition}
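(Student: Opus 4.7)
The plan is to construct $\gamma^N$ at the dense pre-module level $\F^N_c(A) = C_c(X)\cdot A$ by the given formula, verify the two Hilbert-module equivariance identities with respect to $\tilde\alpha$, and then read off each remaining claim. Since $\phi\colon C_0(X)\to\M(A)$ is $G$-equivariant, $\alpha_t$ maps $\F^N_c(A)$ into itself, so $\gamma^N_t(\xi) = \delta(t)^{1/2}\alpha_t(\xi)$ is well defined on $\F^N_c(A)$, and the group law $\gamma^N_{ts} = \gamma^N_t\gamma^N_s$ is immediate because $\delta\colon G\to\Real_+$ is a homomorphism and $\alpha$ is an action.

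The crucial step will be verifying the pair of identities
$$\bbraket{\gamma^N_t(\xi)}{\gamma^N_t(\eta)} = \tilde\alpha_t\!\big(\bbraket{\xi}{\eta}\big),\qquad \gamma^N_t(\xi\cdot f) = \gamma^N_t(\xi)\cdot\tilde\alpha_t(f),$$
for $\xi,\eta\in\F^N_c(A)$ and $f\in C_c(N,A)$. Both boil down to the change of variables $n\mapsto t^{-1}nt$ inside integrals over $N$; under the Haar normalizations of \eqref{iterated-int}, the conjugation Jacobian is exactly $\delta(t)$, which is what forces the factor $\delta(t)^{1/2}$ in the definition and makes it cancel correctly against the $\Delta_N(\cdot)^{-1/2}$ appearing in the inner product and right action formulas from \S\ref{sec-prel}. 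Getting these modular-function computations right (tracking $\Delta_G|_N$, $\Delta_N$ and the conjugation cocycle) is the main bookkeeping hurdle. Once the identities hold on the pre-module, the hypothesis that $I_\pn$ is $\tilde\alpha$-invariant lets $\tilde\alpha_t$ descend to $A\rtimes_{\alpha|,\pn}N$, and $\gamma^N_t$ extends uniquely by continuity to a $\Cst$-module isomorphism $\F^N_\pn(A)\to\F^N_\pn(A)$ compatible with $\tilde\alpha_t$. Strong continuity of $\gamma^N$ is inherited from that of $\alpha$ on the dense subspace $\F^N_c(A)$.

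To identify $\alpha^N=\Ad\gamma^N$ on $A^N_\pn\cong\K(\F^N_\pn(A))$, I observe that $\gamma^N_t(a\xi) = \alpha_t(a)\gamma^N_t(\xi)$ for $a\in\M(A)^N$, which immediately shows that $\alpha^N_t$ acts on the dense subalgebra $A^N_c$ by the restriction of $\alpha$. Equation~\eqref{eq-fixaction} is then obtained by the same change of variables $n\mapsto tnt^{-1}$ applied inside $\EE^N(a)=\int_N^{\st}\alpha_n(a)\dd_N n$, with the interchange of $\alpha_t$ and the strict integral justified by the inductive-limit continuity of $\EE^N$ recalled from \cite{Buss-Echterhoff:Exotic_GFPA}*{Lemma~2.12}.

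Finally, for $n\in N$ one has $\Delta_{G/N}(nN)=1$, and running the same change-of-variables argument with $t=n$ yields $\gamma^N_n(\xi)=\xi\cdot\iota_N(n^{-1})$ in the right $A\rtimes_{\alpha|,\pn}N$-action on $\F^N_\pn(A)$. This is exactly the twist-preservation condition making $(\F^N_\pn(A),\gamma^N)$ a $(G,N)$-equivariant correspondence from $(A^N_\pn,\alpha^N,1_N)$ to $(A\rtimes_{\alpha|,\pn}N,\tilde\alpha,\iota_N)$; it also forces $\Ad\gamma^N_n$ to be the identity on $\K(\F^N_\pn(A))$, so $\alpha^N$ is trivial on $N$ and hence inflated from a $G/N$-action. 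When $N$ acts freely on $X$, the tensor-product decomposition $\F^N_\pn(A)\cong\F(X)\otimes_{C_0(X)\rtimes_\tau N}(A\rtimes_{\alpha|,\pn}N)$ from \S\ref{sec-prel}, together with the fullness of $\F(X)$ in the free proper case, shows that $\F^N_\pn(A)$ is full as a right $A\rtimes_{\alpha|,\pn}N$-module, upgrading this correspondence to a $(G,N)$-equivariant Morita equivalence.
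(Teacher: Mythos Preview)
Your proof is correct and arrives at the same conclusions, but the route differs from the paper's. The paper does not verify the two Hilbert-module equivariance identities directly on $\F^N_c(A)$; instead it invokes the tensor decomposition $\F^N_\pn(A)\cong \F^N(X)\otimes_{\contz(X)\rtimes_\tau N}(A\rtimes_{\alpha|,\pn}N)$ and imports the $G$-action from the known $(G,N)$-equivariant structure on $\F^N(X)$ established in \cite{Chabert-Echterhoff:Twisted}*{Remark~5.8}, obtaining $\gamma^N_t(f\odot g)=\tilde\tau_t(f)\odot\tilde\alpha_t(g)$. The explicit formula $\gamma^N_t(\xi)=\delta(t)^{1/2}\alpha_t(\xi)$ is then read off a posteriori. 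Your approach is more elementary and self-contained: it avoids the external reference and the tensor machinery at the cost of carrying out the modular-function bookkeeping (the substitution $n\mapsto t^{-1}nt$ with Jacobian $\delta(t)$, and the invariance $\Delta_N(t^{-1}nt)=\Delta_N(n)$) by hand. The paper's route, in turn, makes the well-definedness of $\gamma^N$ on the completion $\F^N_\pn(A)$ automatic once $\tilde\alpha$ descends, whereas you must argue it from the pre-module identities. Both arguments handle $\alpha^N$, equation~\eqref{eq-fixaction}, and the twist compatibility $\gamma^N_n(\xi)=\xi\cdot\iota_N(n^{-1})$ in essentially the same way.
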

\begin{proof}
We know that $\F_\pn^N(A)\cong \F^N_X\otimes_{\contz(X)\rtimes_\tau N} A\rtimes_\pn N$ as Hilbert $A\rtimes_\pn N$-modules, via the map that sends $f\otimes g\in \contc(X)\odot \contc(N,A)$ to
$$f*g=\int_N\Delta_N(s)^{-1/2}\alpha_s(f\cdot g(s^{-1}))\dd_N(s)\in \contc(X)\cdot A=\F_c(A).$$
It was observed  in \cite[Remark 5.8]{Chabert-Echterhoff:Twisted} that $\F^N_X=\F^N(\contz(X))$ carries a $G$-action $\tilde\tau^N$, given by $\tilde\tau^N_t(f)=\delta(t)^{1/2}\tau_t(f)$ for all $f\in \contc(X)$, which is compatible with the twisted decomposition $(G,N)$-action on $\contz(X)\rtimes N$ and the $G/N$-action on $X_N:=N\bs X$. Hence, there is a $G$-action on $\F^N_X\otimes_{\contz(X)\rtimes_\tau N} A\rtimes_\pn N$ given on $\contc(X)\odot \contc(N,A)$ by the formula $\gamma^N_t(f\odot g)=\tilde\tau_t(f)\odot \tilde\alpha_t(g)$.
Using the isomorphism $\F_\pn^N(A)\cong \F^N_X\otimes_{\contz(X)\rtimes_\tau N} A\rtimes_\pn N$
we may view $\gamma^N$ as an action on $\F_\pn^N(A)$ and
 a straightforward computation shows that $\gamma_t(\xi)=\delta(t)^{1/2}\alpha_t(\xi)$ for all $\xi\in \F^N_c(A)$.
The corresponding action $\alpha^N\defeq \Ad\gamma^N$ on $A^N_\pn\cong \K(\F_\pn^N(A))$ is given, for all $\xi,\eta\in \F^N_c(A)$, by:
$$
\alpha^N_t(\EE^N(\xi\eta^*))=_{A^N_\pn}\bbraket{\gamma^N_t\xi}{\gamma^N_t\eta}=\delta(t) \EE^N(\alpha_t(\xi\eta^*))=\alpha_t(\EE^N(\xi\eta^*)),
$$
where the last equation follows from a straightforward computation using the fact that $\int_N \varphi(tnt^{-1})\,\dd n=\delta(t)\int_N \varphi(n)\,\dd n$ for
every integrable function $\varphi$ on $N$.
This proves  (\ref{eq-fixaction}). Since the elements in $A_c^N$ are fixed by $\alpha_n$ for all $n\in N$, it follows that $\alpha^N$ is trivial on $N$.

It is  straightforward  to check that $\gamma^N_n(\xi)*g=\xi*(\iota_N(n^{-1})\cdot g)$ for all $n\in N$, $\xi\in\F^N_c(A)$ and $g\in \contc(N,A)$ and hence $\gamma^N_n(\xi)=\xi\cdot \iota_N(n^{-1})$, which implies the compatibility of $\gamma^N$ with the twists.
The last assertion follows from the  fact that $\F_\pn^N(A)$ is a $A_\pn^N$--$A\rtimes_\pn N$ imprimitivity bimodule if
$N$ acts freely on $X$.
\end{proof}

If $(A,\alpha,\phi)$ is a weakly proper $X\rtimes G$-algebra  as in the proposition and if $N$ is a closed normal subgroup
of $G$, then $G/N$ acts properly on $N\bs X$ in a canonical way and  we
 observe that the homomorphism
 \begin{equation}\label{eq-phiN}
  \phi^N\colon \contz(N\bs X)\to \M(A^N_\pn)\cong\L_{A\rtimes N}(\F_\mu^N(A))
  \end{equation}
   induced by $\phi$ corresponds to the canonical left $\contz(N\bs X)$-action on $\F_\mu^N(A)\cong \F^N(X)\otimes_{\contz(X)\rtimes N}A\rtimes_\pn N$ given by
\begin{equation}\label{eq-leftaction}
\phi^N(f)m=\phi(f)m\quad\forall m\in A^G_c
\end{equation}
(where $\phi$ has been tacitly extended to $\contb(X)$ and we view $\contz(N\bs X)$ as a subalgebra of $\contb(X)$ in the usual way). From this it is easy to see that $\phi^N$ is $G/N$-equivariant and hence $(A^N_\pn,\alpha^N)$ is a weakly proper $N\bs X\rtimes G/N$-algebra.
Thus we may study the iterated fixed-point algebra $(A_\pn^N)_\pn^{G/N}$, if $\pn$ stands either for the universal or for the reduced
crossed-product norms. For the corresponding conditional expectation $\EE^{G/N}: (A_\mu^N)_c\to (A_\mu^N)_c^{G/N}$, we get the following:

\begin{lemma}\label{iterated-exp}
Let $(A,\alpha,\phi)$ and $N\subseteq G$ be as above. Then $\EE^N(A_c)$ is inductive limit dense in $(A_\mu^N)_c$ and hence
$\EE^{G/N}(\EE^N(A_c))$ is inductive limit dense in $(A_\mu^N)_c^{G/N}$. Moreover, we have
$$\EE^{G/N}(\EE^N(a))=\EE^G(a)\quad \forall a\in A_c.$$
Hence $A_c^G=\EE^G(A_c)=\EE^{G/N}(\EE^N(A_c))$ is inductive limit dense in $(A_\mu^N)_c^{G/N}$.
\end{lemma}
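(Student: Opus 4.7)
The strategy is to prove the three assertions in sequence, relying on three inputs: the inductive-limit continuity and surjectivity of the conditional expectations $\EE^N$ and $\EE^{G/N}$ supplied by \cite{Buss-Echterhoff:Exotic_GFPA}*{Lemma~2.12} (applied, respectively, to the weakly proper $X\rtimes N$-algebra $(A,\alpha|,\phi)$ and to the weakly proper $N\bs X\rtimes G/N$-algebra $(A_\mu^N,\alpha^N,\phi^N)$ assembled in Proposition~\ref{prop:FixingN} together with \eqref{eq-phiN}); the covariance identity $\alpha^N_t(\EE^N(a))=\alpha_t(\EE^N(a))$ from Proposition~\ref{prop:FixingN}; and the iterated integration formula \eqref{iterated-int}.

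The cleanest of the three assertions is the identity $\EE^{G/N}(\EE^N(a))=\EE^G(a)$: since $\alpha$ is a group homomorphism, $\int_N\alpha_{tn}(a)\dd_N n=\alpha_t(\EE^N(a))=\alpha^N_t(\EE^N(a))$, and plugging this into \eqref{iterated-int} yields
\[
\EE^G(a)=\int_{G/N}\int_N\alpha_{tn}(a)\dd_N n\,\dd_{G/N}(tN)=\int_{G/N}^\st\alpha^N_t(\EE^N(a))\,\dd_{G/N}(tN)=\EE^{G/N}(\EE^N(a)).
\]
The inductive limit density of $\EE^N(A_c)=A_c^N$ in $(A_\mu^N)_c$ uses only that $A_c^N$ is norm-dense in its completion $A_\mu^N$: given $\xi=f\cdot m\cdot g\in(A_\mu^N)_c$ with $f,g\in\contc(N\bs X)$ and $m\in A_\mu^N$, a norm-approximation $m_i\in A_c^N$ of $m$ produces $f\cdot m_i\cdot g\in A_c^N$ lying in the fixed cut-off subspace $f\cdot A_\mu^N\cdot g$, which is precisely inductive-limit convergence.

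For the last claim, apply $\EE^{G/N}$, which by \cite{Buss-Echterhoff:Exotic_GFPA}*{Lemma~2.12} is inductive-limit continuous and surjective onto $(A_\mu^N)_c^{G/N}$: thus $\EE^{G/N}(\EE^N(A_c))$ is inductive limit dense in $(A_\mu^N)_c^{G/N}$, and by the identity above this dense subset coincides with $\EE^G(A_c)=A_c^G$. The main technical point I expect to require care is verifying that the hypotheses of \cite{Buss-Echterhoff:Exotic_GFPA}*{Lemma~2.12} are genuinely available for the iterated structure $(A_\mu^N,\alpha^N,\phi^N)$, but Proposition~\ref{prop:FixingN} (for the $G/N$-action) together with \eqref{eq-phiN} (for the $\contz(N\bs X)$-structure and its equivariance) essentially packages this information, so no new work beyond checking nondegeneracy of $\phi^N$ should be needed.
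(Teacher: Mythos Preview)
Your proof is correct and follows essentially the same approach as the paper's own (very terse) argument: inductive-limit density of $A_c^N$ in $(A_\mu^N)_c$, continuity of $\EE^{G/N}$, and the iterated integration formula~\eqref{iterated-int}. You supply more detail than the paper does---in particular, your explicit argument for the density of $A_c^N=\EE^N(A_c)$ in $(A_\mu^N)_c$ via norm-approximation with a fixed cut-off, and your invocation of the identity $\alpha^N_t(\EE^N(a))=\alpha_t(\EE^N(a))$ from Proposition~\ref{prop:FixingN} to justify the passage from $\alpha$ to $\alpha^N$ in the iterated integral---but these are exactly the steps the paper is tacitly relying on.
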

\begin{proof} The first assertion follows from the fact that $\EE^N$ and $\EE^{G/N}$ are continuous with respect to the
inductive limit topologies and that $A_c^N=\EE^N(A_c)$ is inductive limit dense in $(A_\mu^N)_c$. The second assertion
follows  from (\ref{iterated-int}).
\end{proof}

In what follows we abuse slightly the notation and
 write $\contc(G,\F^N_c(A),\iota_N)$ for the space of all functions $g\in \contc(G,\F_\pn^N(A),\iota_N)$  such
 that there exists an $f\in \contc(X)$ with $g(s)=f\cdot g(s)\in \F_c^N(A)$ for all $s\in G$.
 We leave it as an exercise for the reader to check that $\contc(G,\F^N_c(A),\iota_N)$ is inductive limit dense (with respect to compact supports mod $N$) in $\contc(G,\F_\pn^N(A),\iota_N)$.

\begin{proposition}\label{prop:FixingInStages}
Let $(A,\alpha,\phi)$ be a weakly proper $X\rtimes G$-algebra and consider the corresponding weakly proper $N\bs X\rtimes G/N$-algebra $(A^N_\pn,\alpha^N,\phi^N)$ as in Proposition~\ref{prop:FixingN}, where  $\pn$ stands either for the maximal or for the reduced crossed-product norms. Then there is a canonical isomorphism
$$\F_\pn^{G/N}(A^N_\pn)\otimes_{A^N_\pn\rtimes G/N}\big(\F_\pn^N(A)\rtimes (G,N)\big)\congto \F_\pn^G(A)$$
as Hilbert $A\rtimes_\pn N\rtimes_\pn (G,N)\cong A\rtimes_\pn G$-modules via the map sending $a\otimes g\in A^N_c\odot \contc(G,\F^N_c(A),\iota_N)$ to $a*g\defeq \int_{G/N}\Delta_G(s)^{-1/2}\alpha_s(a\cdot g(s^{-1}))\dd_{G/N}(sN)$.
\end{proposition}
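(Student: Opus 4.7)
The plan is to check that the prescribed formula on the algebraic tensor product $A_c^N\odot C_c(G,\F_c^N(A),\iota_N)$ descends through the balanced tensor product over $A_\pn^N\rtimes G/N$ to an isometric map into $\F_\pn^G(A)$ with dense range; then it automatically extends to the stated Hilbert-module isomorphism, with the target identification $A\rtimes_\pn N\rtimes_\pn(G,N)\cong A\rtimes_\pn G$ provided by the Green / Kirchberg--Wassermann decomposition isomorphism already cited.

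Well-definedness and balancing are bookkeeping. For $a\in A_c^N$ and $g\in C_c(G,\F_c^N(A),\iota_N)$ with $g(s)=f_g\cdot g(s)$, $f_g\in C_c(X)$, the integrand $\alpha_s(a\cdot g(s^{-1}))$ lies in $A_c$ with compact support modulo $N$, so the strict integral over $G/N$ gives an element of $C_c(X)\cdot A=\F^G_c(A)$. Balancing reduces to checking $(a\cdot\varphi)*g=a*(\varphi\cdot g)$ for $\varphi\in C_c(G/N,A_c^N)$, which unfolds to a double integral whose two sides agree after a change of variable on $G/N$ together with the identities $\alpha^N_t|_{A_c^N}=\alpha_t$ and $\gamma^N_n(\xi)=\xi\cdot\iota_N(n^{-1})$ from Proposition~\ref{prop:FixingN}.

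The main computational obstacle is the isometry identity
\[
\bbraket{a*g}{a'*g'}_{A\rtimes_\pn G} = \bbraket{g}{\bbraket{a}{a'}_{A_\pn^N\rtimes G/N}\cdot g'}_{A\rtimes_\pn G}
\]
in $A\rtimes_\pn G\cong A\rtimes_\pn N\rtimes_\pn(G,N)$. The right-hand side evaluated at $t\in G$ unfolds into an iterated integral over $G/N\times G/N$ whose inner pairing is an $\EE^N$-type expression involving $\alpha^N_s(a')=\alpha_s(a')$ weighted by $\Delta_{G/N}(s)^{-1/2}$ and the factor $\delta(s)^{1/2}$ coming from $\gamma^N$ in Proposition~\ref{prop:FixingN}, while the left-hand side is a single integral over $G$ weighted by $\Delta_G(t)^{-1/2}$. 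Collapsing $\int_{G/N}\int_N$ into $\int_G$ via the iterated integration formula~\eqref{iterated-int}, and aligning half-powers of modular functions through $\Delta_G(t)=\delta(t)\Delta_{G/N}(tN)$, produces the desired identity. This alignment of modular factors is the delicate point; structurally the computation parallels the analogous tensor product decomposition $\F_\pn(A)\cong \F(X)\otimes_{C_0(X)\rtimes G}(A\rtimes_\pn G)$ recalled in \S\ref{sec-prel}.

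For density of the range, every $e\cdot b\in \F^G_c(A)$ with $e\in C_c(X)$ and $b\in A$ is approximated in the inductive-limit topology by elements $a*g$: choose $g\in C_c(G,\F_c^N(A),\iota_N)$ supported near the identity coset of $G/N$ with $g\approx b$ (averaging against $\iota_N$ to enforce the twist) and $a\in A_c^N$ approximating $\phi(e)$. Since $\F^G_c(A)$ is norm-dense in $\F_\pn^G(A)$, this forces the image to be norm-dense, and the isometry established in the previous paragraph then upgrades the map to the desired Hilbert $A\rtimes_\pn G$-module isomorphism.
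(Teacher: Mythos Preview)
Your approach is essentially the same as the paper's: verify that $a*g$ lands in $\F_c^G(A)$, check that the map preserves the right inner products, and argue dense range; the paper in fact defers the inner-product computation and density to the analogous argument in \cite{Buss-Echterhoff:Exotic_GFPA}*{Proposition~2.9}, so your sketch is more explicit there than the paper itself. The one place where the paper is more careful and you gloss is the well-definedness of the $G/N$-integral: before integrating over $G/N$ you must check that $h(s)\defeq\Delta_G(s)^{-1/2}\alpha_s(a\cdot g(s^{-1}))$ is actually constant on $N$-cosets, which uses the twist relation $g(n^{-1}s^{-1})=g(s^{-1})\cdot\iota_N(n)=\Delta_G(n)^{1/2}\alpha_{n^{-1}}(g(s^{-1}))$ together with the $N$-invariance of $a$; only then can one pass to an honest $\int_G$ via a cut-off function $\varphi\in\contc(G)$ and see that $a*g\in\contc(X)\cdot A$.
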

\begin{proof}
Let us first observe that $A^N_c=\contc(N\bs X)\cdot A^N_c$ is indeed a dense subspace of $\F_\pn^{G/N}(A^N_\pn)$, and that the function $s\mapsto h(s)\defeq \Delta_G(s)^{-1/2}\alpha_s(a\cdot g(s^{-1}))$ is constant on $N$-orbits (and has compact support mod $N$) so that the integral over $G/N$ defining $a*g$ makes sense and gives an element of $A$. In fact, since $g\in \contc(G,\F^N_c(A),\iota_N)$, we have $g(n^{-1}s^{-1})=g(s^{-1})\cdot \iota_N(n)=\Delta_G(n)^{1/2}\alpha_{n^{-1}}(g(s))$ and since $a$ is $N$\nb-invariant, this implies
$$h(sn)=\Delta_G(sn)^{-1/2}\alpha_{sn}(a\cdot g(n^{-1}s^{-1}))=\Delta_G(s)^{-1/2}\alpha_s(a\cdot g(s^{-1}))=h(s)$$
 for all $s\in G$ and $n\in N$. Now observe that $a*g\in \F_\pn^G(A)=\contc(X)\cdot A$. In fact, take $K\sbe G$ compact such that $\supp(h)\sbe KN$ and let $\varphi\in \contc(G)$ with $\int_N\varphi(sn)\dd_N(n)=1$ for all $s\in K$. Then
$$\int_{G/N}h(s)\dd_{G/N}(sN)=\int_{G/N}\int_N h(sn)\varphi(sn)\dd_N(n)\dd_{G/N}(sN)=\int_G h(s)\varphi(s)\dd{s}.$$
Now, take $f\in \contc(X)$ with $g(s)=f\cdot g(s)$ for all $s\in G$. Since $a\in A^N_c$, we have $a\cdot f\in A_c=\contc(X)\cdot A\cdot \contc(X)$, so there is $\psi\in \contc(X)$ with $a\cdot f=\psi\cdot b$ for some $b\in A$. But then
$$a*g=\int_G \varphi(s)\Delta_G(s)^{-1/2}\tau_s(\psi)\cdot\alpha_s(b\cdot g(s^{-1}))\dd{s}$$
which is easily seen to be an element of $\F_c(A)$ (compare this to the formula~(2.10) in \cite{Buss-Echterhoff:Exotic_GFPA}). We therefore have a well-defined linear map from the dense subspace $A^N_c\odot \contc(G,\F^N_c(A),\iota_N)$ of $\F_\pn^{G/N}(A^N_\pn)\otimes_{A^N_\pn\rtimes G/N}\F_\pn^N(A)\rtimes (G,N)$ into the dense subspace $\F_c(A)$ of $\F(A)$. Now, a computation as in the proof of \cite{Buss-Echterhoff:Exotic_GFPA}*{Proposition~2.9} shows that this map preserves inner products and has dense range and hence extends to an isomorphism $\F_\pn^{G/N}(A^N_\pn)\otimes_{A^N_\pn\rtimes G/N}\F_\pn^N(A)\rtimes (G,N)\congto \F_\pn^G(A)$.
\end{proof}

Using the above proposition, we are now able to show the desired isomorphism $(A_\pn^N)_\pn^{G/N}\cong A_\pn^G$
for $\pn=u$ and $\pn=r$.
For reduced norms and {\em free} proper actions, this  result has been obtained in
\cite{Huef-Kaliszewski-Raeburn-Williams:Fixed}*{Theorem~4.5}.
Our  method of proof is, however, quite different from \cite{Huef-Kaliszewski-Raeburn-Williams:Fixed} and works for reduced and universal norms as well as for non-free proper actions.

\begin{theorem}\label{thm:FixingStages}
For a weakly proper $X\rtimes G$-algebra $A$, there is an isomorphism $(A^N_\pn)^{G/N}_\pn\cong A^G_\pn$
extending the inclusion map $A_c^G=\EE^{G/N}(A^N_c) \subseteq (A_\pn^N)_\pn^{G/N}$ into $A^G_c\subseteq A_\pn^G$ (where $\|\cdot\|_\pn$ denotes either the universal or the reduced crossed-product norm).
\end{theorem}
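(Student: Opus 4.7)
The plan is to upgrade the Hilbert module isomorphism of Proposition~\ref{prop:FixingInStages} to an isomorphism of the associated algebras of compact operators, identifying $A_\pn^G=\K(\F_\pn^G(A))$ with $(A_\pn^N)_\pn^{G/N}=\K(\F_\pn^{G/N}(A_\pn^N))$. Write
$$\Phi\colon \F_\pn^{G/N}(A^N_\pn)\otimes_{A^N_\pn\rtimes_\pn G/N}\M\congto \F_\pn^G(A),\qquad \M\defeq \F_\pn^N(A)\rtimes_\pn(G,N),$$
for the isomorphism given there. Transferring the canonical left action of $\K(\F_\pn^{G/N}(A^N_\pn))$ on the first tensor factor produces a $*$-homomorphism $\pi\colon (A^N_\pn)^{G/N}_\pn\to \Lb(\F_\pn^G(A))$.

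The first step is to pin down $\pi$ on the dense subalgebra $A_c^G=\EE^{G/N}(A_c^N)$, which is inductive-limit dense in $(A_\pn^N)_c^{G/N}$ by Lemma~\ref{iterated-exp} and hence norm-dense in $(A^N_\pn)^{G/N}_\pn$. A direct computation on an elementary tensor $a\otimes g\in A_c^N\odot \contc(G,\F_c^N(A),\iota_N)$, applying the explicit formula $\Phi(a\otimes g)=a*g$ from Proposition~\ref{prop:FixingInStages} and using the $G$-invariance $\alpha_s(m)=m$ of $m\in A_c^G\sbe \M(A)^G$ to commute $m$ past $\alpha_s$ inside the defining integral, yields $\Phi(\pi(m)(a\otimes g))=\Phi((m\cdot a)\otimes g)=(m\cdot a)*g=m\cdot(a*g)=m\cdot \Phi(a\otimes g)$. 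Thus $\pi(m)$ acts as left multiplication by $m\in \M(A)^G$ on $\F_c(A)\sbe \F_\pn^G(A)$, an operator that lies in $A_c^G\sbe A_\pn^G=\K(\F_\pn^G(A))$. By continuity and density, $\pi$ refines to a $*$-homomorphism $\pi\colon (A^N_\pn)^{G/N}_\pn\to A_\pn^G$ that restricts to the identity on $A_c^G$.

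Surjectivity of $\pi$ is then immediate, since the image is closed and contains the dense subalgebra $A_c^G$. The main obstacle is injectivity. Here I would invoke the general fact that for a $B$--$D$ correspondence $\E_2$, the inflation $T\mapsto T\otimes 1_{\E_2}$ of $\K(\E_1)$ into $\Lb(\E_1\otimes_B\E_2)$ is isometric whenever the left action of $B$ on $\E_2$ is faithful: if $(T\xi)\otimes \eta=0$ for all $\eta$, then $\bbraket{\eta}{\bbraket{T\xi}{T\xi}\eta}_D=0$ for all $\eta$, and a faithful, hence isometric, left action forces $\bbraket{T\xi}{T\xi}=0$ by a supremum argument. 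In our situation the left $B$-action on $\M$ is obtained by descent of the canonical left action of $A^N_\pn=\K(\F_\pn^N(A))$ on $\F_\pn^N(A)$; because the $(G,N)$-twist on $A^N_\pn$ is trivial (Proposition~\ref{prop:FixingN}) and $\pn\in\{u,r\}$, the standard descent isomorphism $\K(\M)\cong \K(\F_\pn^N(A))\rtimes_\pn(G,N)=A^N_\pn\rtimes_\pn G/N=B$ identifies this action with the tautological embedding $\K(\M)\into \Lb(\M)$, which is manifestly faithful. This yields the required norm equality on $A_c^G$ and promotes $\pi$ to the asserted isomorphism.
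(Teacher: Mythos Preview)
Your proof is correct and follows essentially the same route as the paper's: both transport the compact-operator algebra through the module isomorphism of Proposition~\ref{prop:FixingInStages} and rely on the descent identification $\K(\M)\cong A^N_\pn\rtimes_\pn G/N$. The only difference is packaging---the paper notes that left-fullness of $\M$ makes $S\mapsto S\otimes 1$ an isomorphism $\K(\F_\pn^{G/N}(A^N_\pn))\congto\K(\E)$ outright and then checks it is the identity on $A_c^G$, whereas you verify the identity on $A_c^G$ first and argue injectivity and surjectivity separately.
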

\begin{proof}
Let $\psi\colon \E\congto\F_\pn^G(A)$, $\psi(a\otimes g)=a*g$, be the isomorphism of Proposition~\ref{prop:FixingInStages}, where $\E\defeq \F_\pn^{G/N}(A^N_\pn)\otimes_{A^N_\pn\rtimes G/N}\big(\F_\pn^N(A)\rtimes_\pn (G,N)\big)$. This isomorphism induces an isomorphism $\widetilde\psi\colon \K(\E)\congto\K(\F_\pn^G(A))=A^G_\pn$ determined by the equation
$$\widetilde\psi(T)(\psi(a\otimes g))=\psi(T(a\otimes g)).$$
On the other hand, since $A^N_\pn\rtimes G/N\cong\K(\F_\pn^N(A)\rtimes_\pn(G,N))$, we have a canonical isomorphism $(A^N_\pn)^{G/N}_\pn=\K(\F_\pn^{G/N}(A^N_\pn))\congto \K(\E)$ sending an operator $S\in \K(\F_\pn^{G/N}(A^N_\pn))$ to the operator $S\otimes 1\in \K(\E)$ given by $(S\otimes 1)(a\otimes g)=S(a)\otimes g$.

We therefore get an isomorphism $(A^N_\pn)^{G/N}_\pn\congto A^G_\pn$ sending $S\in (A^N_\pn)^{G/N}_\pn$ to $\widetilde\psi(S\otimes 1)\in A^G_\pn$. Applying this to $S=\EE^{G/N}(b)=\EE^G(c)$ for $b=\EE^N(c)$, $c\in A_c$,
we see that $S$ is $G$-invariant. Hence we get:
\begin{align*}
\tilde\psi(S)(\psi(a\otimes g))&=\psi(Sa\otimes g)=Sa*g\\
&=\int_{G/N}\Delta_G(s)^{-1/2}\alpha_s(Sa\cdot g(s^{-1})\dd_{G/N}{sN}\\
&=S\left(\int_{G/N}\Delta_G(s)^{-1/2}\alpha_s(a\cdot g(s^{-1})\dd_{G/N}{sN}\right)\\
&=S\cdot (a*g)=S\cdot \psi(a\otimes g)
\end{align*}
so that $\tilde\psi\colon (A^N_\pn)^{G/N}_\pn\to A^G_\pn$ is the extension of the identity map on $\EE^{G/N}(A^N_c)$.
\end{proof}

Suppose  that $H$ is a closed subgroup of a group $G$ and that $(A,\alpha)$ is an $H$-algebra.
Then $C_0(G)\otimes A$ becomes a
weak $G\rtimes (G\times H)$ algebra with respect to the  structure map $\psi:f\mapsto f\otimes 1$ from $C_0(G)$
into $\cZ\M(C_0(G)\otimes A)$. We let $G$ act on $C_0(G)\otimes A$ via $\tau\otimes\id$, where
$\tau$ denotes the left translation action of $G$ on itself, and we let $H$ act on $C_0(G)\otimes A$ via
$\sigma\otimes\alpha$, with $\sigma_h(f)(s)=f(sh)$ for all $f\in C_0(G)$, $s\in G, h\in H$.
The actions of $G$ and $H$ on $C_0(G)\otimes A$ clearly commute and make the structure map $\psi$ equivariant with respect to both $G$ and $H$ actions.
Since $H$ acts properly on $G$, the restriction of the action to $H$ gives $C_0(G)\otimes A$ the structure
of a proper $G\rtimes H$-algebra and we can form the $H$-fixed-point algebras
$(C_0(G)\otimes A)_\pn^H$ for this structure. Since the structure map $\psi$ takes its values in
the center $\cZ\M(C_0(G)\otimes A)$, it follows from
 \cite[Theorem~3.28]{Buss-Echterhoff:Imprimitivity} that they do not depend on the given crossed-product norm $\|\cdot\|_\pn$
on $C_c(H,C_0(G)\otimes A)$ (indeed, for centrally proper actions all such norms coincide with  the universal norm $\|\cdot\|_u$).
Note that the $G$-action on $C_0(G)\otimes A$ factors to a $G$-action on the $H$-fixed-point algebra $(C_0(G)\otimes A)^H$
(we may now omit the norm $\pn$ in the notation).

The algebra $(C_0(G)\otimes A)^H$ is actually well-known under the name of
{\em induced algebra} $\Ind_H^G(A,\alpha)$ and can be described as follows:
$$
\Ind_H^G(A,\alpha)=\left\{F\in C_b(G,A): \begin{matrix} F(sh)=\alpha_{h^{-1}}(F(s))\;\forall s\in G, h\in H\\
\text{and}\;  \big(sH\mapsto \|F(s)\|)\in C_0(G/H)\end{matrix}\right\}.
$$
Indeed, identifying $\M(C_0(G)\otimes A)$ with the set of bounded strictly continuous functions from $G$ to $A$, it is an
easy exercise to check that $(C_0(G)\otimes A)_c^H$ is just the set of functions in $\Ind_H^G(A,\alpha)$ which have
compact supports mod $H$. In this picture, the $G$-action is given as the {\em induced action}
$$\Ind\alpha:G\to \Aut(\Ind_H^G(A,\alpha)); \Ind\alpha_s(F(t))=F(s^{-1}t)\quad \forall s,t\in G.$$
If $A=C_0(Y)$ for an $H$-space $Y$, we get $\Ind_H^GC_0(Y)\cong C_0(G\times_HY)$, where the {\em induced $G$-space}
$G\times_HY$ is defined as the quotient $H\bs(G\times Y)$ under the action of $H$ on $G\times Y$ given by
$h(s,y)=(sh^{-1}, hy)$. Moreover, if we start with a weakly proper $Y\rtimes H$-algebra
$(A,\alpha,\phi)$, the algebra $C_0(G)\otimes A$ actually becomes a weakly proper $(G\times Y)\rtimes(G\times H)$-algebra
via the obvious structure map $\psi:C_0(G\times Y)\to \M(C_0(G)\otimes A)$.
It follows then from \cite[Proposition 3.12]{Buss-Echterhoff:Imprimitivity} that the $H$-fixed-point algebras $(C_0(G)\otimes A)_\pn^H$
and the corresponding modules $\F_\pn(C_0(G)\otimes A)$ coincide,  no matter whether we regard $C_0(G)\otimes A$ as
a weakly proper $(G\rtimes Y)\rtimes H$ or a weakly proper $G\rtimes H$-algebra. But if we view it as a
$(G\times Y)\rtimes (G\times H)$-algebra, we see that $\Ind_H^G(A,\alpha)=(C_0(G)\otimes A)^H$ carries the
structure of a weakly proper $(G\times_HY)\rtimes G$-algebra, and by Theorem~\ref{thm:FixingStages} we obtain
isomorphisms
\begin{equation}\label{eq-isoinduced}
\begin{split}
(\Ind_H^G(A,\alpha))_\pn^G &\cong ((C_0(G)\otimes A)^H)_\pn^G\cong
(C_0(G)\otimes A)_\mu^{G\times H}\\
&\cong ((C_0(G)\otimes A)^G)_\pn^H\cong A_\mu^H,
\end{split}
\end{equation}
where $\|\cdot\|_\pn$ denotes either the universal or the reduced crossed-product norm (everywhere).
The last isomorphism in (\ref{eq-isoinduced}) is induced by the $H$-equivariant isomorphism
$A\cong \Ind_G^G(A,\id)$; $a\mapsto 1_G\otimes a$. We summarize our discussion as follows:

\begin{proposition}\label{prop:IndPreservesFix}
Let $H$ be a closed subgroup of $G$ and let $(A,\alpha,\phi)$ be a weakly proper $Y\rtimes H$-algebra.
Let $\|\cdot\|_\pn$ denote either the universal or reduced crossed-product norm for both $G$ and $H$.
Then there is an isomorphism
\begin{equation}\label{eq:IndPreservesFix}
A^{H,\alpha}_\pn\congto (\Ind_H^G(A,\alpha))^{G,\Ind\alpha}_\pn,
\end{equation}
sending $m\in A^{H,\beta}_c\sbe \M(A)^H$ to the constant function $t\mapsto m$ from $G$ to $\M(A)$ viewed as an element of
$\big(\Ind_H^G(A,\alpha)\big)^{G,\Ind\beta}_c\sbe \M(\contz(G)\otimes A)^G$.
\end{proposition}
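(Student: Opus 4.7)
My plan is to extract Proposition~\ref{prop:IndPreservesFix} from the isomorphism chain (\ref{eq-isoinduced}) already assembled just above, and then to verify that the composite isomorphism is implemented by the constant-function map $m\mapsto(t\mapsto m)$. The key tool is Theorem~\ref{thm:FixingStages} applied in two different orders to the weakly proper $(G\times Y)\rtimes(G\times H)$-algebra $C_0(G)\otimes A$, using the two normal subgroups $\{1\}\times H$ and $G\times\{1\}$ of $G\times H$.

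First, applying Theorem~\ref{thm:FixingStages} with $N=\{1\}\times H$ I obtain
\[
(C_0(G)\otimes A)^{G\times H}_\pn\cong \bigl((C_0(G)\otimes A)^H\bigr)^G_\pn=\bigl(\Ind_H^G(A,\alpha)\bigr)^G_\pn,
\]
since the $H$-fixed-point algebra is $\Ind_H^G(A,\alpha)$ by construction with residual action $\Ind\alpha$. Applying Theorem~\ref{thm:FixingStages} instead with $N=G\times\{1\}$ yields
\[
(C_0(G)\otimes A)^{G\times H}_\pn\cong \bigl((C_0(G)\otimes A)^G\bigr)^H_\pn\cong A^H_\pn,
\]
where the second isomorphism comes from the identification $A\cong \Ind_G^G(A,\id)=(C_0(G)\otimes A)^G$, $a\mapsto 1_G\otimes a$, which is $H$-equivariant because $(\sigma\otimes\alpha)_h(1_G\otimes a)=1_G\otimes\alpha_h(a)$. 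Composing these two isomorphisms yields the required isomorphism~(\ref{eq:IndPreservesFix}). To identify the map on generators, I would trace $m\in A^H_c$: under the inclusion $A\hookrightarrow \M(C_0(G)\otimes A)^G$ it corresponds to the multiplier $1_G\otimes m$, which coincides with the constant function $t\mapsto m$ viewed as an element of $\bigl(\Ind_H^G(A,\alpha)\bigr)^G_c\sbe \M(C_0(G)\otimes A)^{G\times H}$. Since Theorem~\ref{thm:FixingStages} extends the natural inclusion on generators with compact supports, both paths around the diagram agree on $m$.

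The main technical point to check is the compact-support bookkeeping from~(\ref{eq:GFPAwithCompSupp}): one must verify that constant multipliers of the form $1_G\otimes m$ with $m\in\M(A)^H$ do land in $(C_0(G)\otimes A)^{G\times H}_c$ after appropriate cutoffs, using the natural identifications $G\bs(G\times Y)\cong Y$ and $(G\times H)\bs(G\times Y)\cong H\bs Y$. Since the structure map $\psi$ lands in the center of the multiplier algebra and the relevant cutoff spaces match, this reduces to a routine verification once one keeps careful track of which algebra each multiplier belongs to.
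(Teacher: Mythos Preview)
Your proposal is correct and follows essentially the same approach as the paper: both derive the isomorphism from the chain~(\ref{eq-isoinduced}) by applying Theorem~\ref{thm:FixingStages} in two orders to the $(G\times Y)\rtimes(G\times H)$-algebra $C_0(G)\otimes A$, and both identify the explicit form of the map by tracing $m\mapsto 1_G\otimes m$ through the chain, using that Theorem~\ref{thm:FixingStages} acts as the identity on the common dense subalgebra $(C_0(G)\otimes A)^{G\times H}_c$.
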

\begin{proof}
The only statement which is not instantly clear from the above discussion is the special
description of the isomorphism $A_\pn^H\cong (\Ind_H^G(A,\alpha))_\pn^G$.
But this follows easily from the description of the isomorphism  $((C_0(G)\otimes A)^G)_\pn^H\cong A_\mu^H$
in (\ref{eq-isoinduced}) and the fact that, according to
Theorem  \ref{thm:FixingStages}, all other isomorphisms in (\ref{eq-isoinduced}) are induced
by the identity map on $(C_0(G)\otimes A)_c^{G\times H}$.
\end{proof}

\begin{remark}\label{rem:SpecialStructure}
Later we shall apply the above proposition to the special situation in which $Y=G$ equipped with the right translation
action of $H$. Let $(A,\alpha,\phi)$ be a $G\rtimes H$-algebra. Notice that the induced space $G\times_HG$ is $G$-homeomorphic to
$G/H\times G$ via $[(s,t)]\mapsto (sH, s t)$. If we forget the factor $G/H$, we see that $\Ind_H^G(A,\alpha)$
carries a structure of a weakly proper $G\rtimes G$-algebra with structure map $\psi:C_0(G)\to \M(\Ind_H^G(A,\alpha))$ given by
the formula
$$(\psi(f)F)(s)=\phi(\tau_{s^{-1}}(f))F(s)\quad\forall f\in C_0(G), F\in \Ind_H^G(A,\alpha).$$
It follows from \cite[Proposition 3.12]{Buss-Echterhoff:Imprimitivity} that the $G$-fixed-point algebra
 for this $G\rtimes G$-structure on $\Ind_H^G(A,\alpha)$
coincides with the $G$-fixed-point algebra for the $(G\times_HG)\rtimes G$-structure, hence Proposition \ref{prop:IndPreservesFix}
will still apply if we just consider the $G\rtimes G$-structure.
\end{remark}

Suppose now that $(A,\alpha,\phi)$ is a weakly proper $X\rtimes G$-algebra and that $H$ is a closed subgroup of $G$.
Then $(A,\alpha|,\phi)$, where $\alpha|$ denotes the restriction of $\alpha$ to $H$, is a weakly proper $X\rtimes H$-algebra,
and we close this section by proving an  isomorphism
$$\F_\pn^G(A)\otimes_{A\rtimes_\pn G}\X^G_{H,\pn}(A)\cong \F_\pn^H(A),$$
where $\|\cdot\|_\pn$ denotes either the universal or the reduced crossed-product norm.
Here $\X^G_{H,\pn}(A)$ denotes Green's $C_0(G/H,A)\rtimes_\pn G$--$A\rtimes_\pn H$ imprimitivity bimodule of \cite[\S 2]{Green:Local_twisted}.
Recall that it is the completion of $C_c(G,A)$ viewed as a $C_c(H,A)$-pre-Hilbert module with respect to the
module action and inner product
 given by the formulas
\begin{align*}
\xi\cdot\varphi(t)&=\int_H\gamma_H(h)\xi(th) \alpha_{th}\big(\varphi(h^{-1})\big)\dd{h}\\
\bbraket{\xi}{\eta}_{\contc(H,A)}(h)
&=\gamma_H(h)\int_G\alpha_{s^{-1}}\big(\xi(s)^*\eta(sh))\big)\dd{s}
\end{align*}
where $\gamma_H(h):=\sqrt{\Delta_G(h)\Delta_H(h^{-1})}$ for $h\in H$. The above formulas are taken from
\cite[Theorem 4.15]{Williams:crossed-products}. The left action of $C_c(G,C_0(G/H,A))\subseteq C_0(G/H,A)\rtimes_\pn G$ on $C_c(G,A)$ is given by the
formula
$$f*\xi(s)=\int_G f(t,sH)\alpha_t(\xi(t^{-1}s))\,\dd t.$$
The $G$-equivariant  imbedding of $A$ into $\M(C_0(G/H,A))$ as constant functions induces the left action
of $A\rtimes_\pn G$ on $\X^G_{H,\pn}(A)$ given on the level of functions on $f,\xi\in C_c(G,A)$ by
the usual convolution $(f\cdot \xi)(s)=f*\xi(s)=\int_G f(t)\alpha_t(\xi(t^{-1}s))\,\dd t$.
Combining these formulas with the formulas for the pre-imprimitivity modules
$\F_c^G(A)$ and $\F_c^H(A)$ as given in \S \ref{sec-prel} we get the following:

\begin{proposition}\label{prop:inductionViaGFPA}
 Let $A$ be a weakly proper $X\rtimes G$-algebra and let $H$ be a closed subgroup of $G$.
Let $\pn=\un$ or $\pn=\red$. Then there is an isomorphism
$$\F^G_\pn(A)\otimes_{A\rtimes_{\alpha,\pn}G}\X^G_{H,\pn}(A)\congto \F^H_\pn(A)$$
of Hilbert $A\rtimes_{\beta|,\pn}H$-modules which sends an elementary tensor $a\otimes\xi$ in $\F^G_c(A)\odot \contc(G,A)$ to
$a*\xi\defeq \int_G\Delta_G(t)^{-1/2}\alpha_t(a\cdot \xi(t^{-1}))\dd{t}$.

The induced \Star{}homomorphism on compact operators $A^G_\pn\to \M(A^H_\pn)$ is
given by the identity map on $A^G_c$ by viewing $m\in A_c^G$ as a multiplier of $A^H_\pn$
via multiplication in $\M(A)$: $a\cdot m=am$ and $m\cdot a=ma$ for all $a\in A^H_c$.
\end{proposition}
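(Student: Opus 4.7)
The plan is to define the claimed map on the algebraic tensor product $\F^G_c(A)\odot \contc(G,A)$ and verify step by step that it descends to a well-defined Hilbert $A\rtimes_{\alpha|,\pn}H$-module isomorphism. First I would check that for $a=f\cdot b\in \F^G_c(A)$ (with $f\in \contc(X)$, $b\in A$) and $\xi\in \contc(G,A)$, the integral $a*\xi=\int_G\Delta_G(t)^{-1/2}\alpha_t(a\cdot \xi(t^{-1}))\dd t$ converges to an element of $\F^H_c(A)=\contc(X)\cdot A$. This follows by pulling out $\alpha_t(f)$ and using that $t\mapsto \alpha_t(f)$ moves the support continuously through $X$, combined with the compact support of $\xi$; this is the same kind of argument used to show $\F_c(A)\subseteq \F_\pn(A)$ is well defined in \cite{Buss-Echterhoff:Exotic_GFPA}.

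Next I would verify the two compatibility identities. For the right action, one must check $(a*\xi)\cdot_H\phi=a*(\xi\cdot_G\phi)$ for $\phi\in \contc(H,A)$, where $\cdot_H$ denotes the right $\contc(H,A)$-action on $\F^H_c(A)$ and $\cdot_G$ the left $\contc(G,A)$-action/right action on Green's module $\X^G_{H,\pn}(A)$ (recorded just before the proposition). This reduces, after substitution $t\mapsto th$, to Fubini together with the identity $\Delta_G(th)^{-1/2}=\Delta_G(t)^{-1/2}\Delta_G(h)^{-1/2}$ and the definition of $\gamma_H$. For the inner products the analogous identity to prove is
\[
\BBraket{a*\xi}{b*\eta}_{\contc(H,A)}^{\F^H}(h)=\BBraket{\xi}{\BBraket{a}{b}_{\contc(G,A)}^{\F^G}\cdot \eta}_{\contc(H,A)}^{\X}(h),
\]
which, after expanding both sides using the formulas from \S\ref{sec-prel} and from the recollection of Green's module, is an exercise in Fubini and the change of variables $s\mapsto ts$ in one of the $G$-integrals together with $\gamma_H(h)=\sqrt{\Delta_G(h)\Delta_H(h^{-1})}$. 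The main obstacle I expect is bookkeeping these modular factors; a convenient way is to follow the computation in the proof of \cite{Buss-Echterhoff:Exotic_GFPA}*{Proposition~2.9}, whose pattern is already reused in the proof of Proposition~\ref{prop:FixingInStages}.

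Once isometry is established, density of the image follows because any element of $\F^H_c(A)=\contc(X)\cdot A$ can be written in the form $a*\xi$ for a judicious choice of $a\in \F^G_c(A)$ and $\xi\in \contc(G,A)$: concretely, pick $\varphi\in \contc(G)$ with $\int_G \varphi(t)\,\dd t=1$ on a neighbourhood of $\supp(\xi)$ and use approximate units of $\contc(X)$ to realize arbitrary elements of $\contc(X)\cdot A$ as inductive limits of such combinations. Since $\F^G_c(A)\odot \contc(G,A)$ is dense in $\F_\pn^G(A)\otimes_{A\rtimes_\pn G}\X^G_{H,\pn}(A)$ and the image is dense in $\F^H_\pn(A)$, the map extends uniquely to the announced Hilbert-module isomorphism.

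Finally, for the description of the induced \Star{}homomorphism on compacts, I would argue as follows. Identifying $A^G_\pn=\K(\F^G_\pn(A))$ and $A^H_\pn=\K(\F^H_\pn(A))$, the tensor product induces a nondegenerate \Star{}homomorphism $A^G_\pn\to \Lb(\F^H_\pn(A))=\M(A^H_\pn)$, sending $S\in \K(\F^G_\pn(A))$ to $S\otimes 1$. On a generator $m=\EE^G(ab^*)\in A^G_c$, this operator acts on $a*\xi$ as $(m\cdot a)*\xi$ and a small computation shows $(m\cdot a)*\xi=m\cdot(a*\xi)$ in $\M(A)$ because $m\in \M(A)^G$ commutes with $\alpha_t$. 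Hence the induced map is precisely left multiplication by $m$ as a multiplier of $A^H_\pn$, which is what the proposition asserts.
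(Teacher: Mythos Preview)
Your proposal is correct and follows essentially the same route as the paper: verify that $a\otimes\xi\mapsto a*\xi$ preserves the $\contc(H,A)$-valued inner products by a direct computation with Fubini and a change of variables, check compatibility with the right $\contc(H,A)$-action, obtain density of the image via approximate-unit arguments (the paper simply cites \cite{Buss-Echterhoff:Imprimitivity}*{Proposition~3.32} for this step), and deduce the description of the induced map on compacts from the identity $(m\cdot a)*\xi=m\cdot(a*\xi)$ using $G$-invariance of $m$. Your extra preliminary check that $a*\xi\in\F^H_c(A)$ is a point the paper leaves implicit; your density sketch is a bit loose (the phrase ``$\int_G\varphi(t)\,\dd t=1$ on a neighbourhood of $\supp(\xi)$'' needs rewording), but the underlying idea matches the standard argument the paper invokes.
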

\begin{proof}
It is enough to check that the map $a\otimes\xi\mapsto a*\xi$ preserves inner products and has dense range.
For the inner products we let $a,b\in \F_c(A)$ and $\xi,\eta \in C_c(G,A)$  we compute
\begin{align*}
&\bbraket{a\otimes \xi}{b\otimes \eta}_{C_c(H,A)}(h)=
\bbraket{\xi}{\bbraket{a}{b}_{C_c(G,A)}\cdot \eta}_{C_c(H,A)}(h)\\
&=\gamma_H(h)\int_G\int_G \Delta_G(t)^{-1/2} \alpha_{s^{-1}}\big(\xi(s)^*\alpha_h\big(a^* \alpha_t(b\eta(t^{-1}sh))\big)\big)\,\dd t\,\dd s.
\end{align*}
On the other hand, we compute
\begin{align*}
&\bbraket{a*\xi}{b*\eta}_{C_c(H,A)}(h)=\Delta_H(h)^{-1/2}(a*\xi)^*\alpha_h(b*\eta)\\
&=\Delta_H(h)^{-1/2}\int_G\int_G \Delta_G(st)^{-1/2}\alpha_s\big(\xi(s^{-1})^*a^*)\alpha_{ht}\big(b\xi(t^{-1})\big)\,\dd t\,\dd s
\end{align*}
If we apply the transformation $s\mapsto s^{-1}$ followed by the transformation $t\mapsto h^{-1}s^{-1}t$ to the above integral, we see
 that $\bbraket{a\otimes \xi}{b\otimes \eta}_{C_c(H,A)}(h)=\bbraket{a*\xi}{b^*\eta}_{C_c(H,A)}(h)$ for all $h\in H$.
A similar, but easier computation shows that $a*(\xi\cdot\varphi)=(a*\xi)\cdot\varphi$ for
all $a\in \F_c^G(A), \xi\in C_c(G,A)$ and $\varphi\in C_c(H,A)$,  which then implies
that the map $a\otimes \xi\mapsto a*\xi$ extends to an isometric $A\rtimes_{\pn}H$-Hilbert-module map from
$\F^G_\pn(A)\otimes_{A\rtimes_{\alpha,\pn}G}\X^G_{H,\pn}(A)$ into $\F_\pn^H(A)$. Surjectivity of this map
follows from standard approximative unit arguments as done, for example,
 in the proof of \cite{Buss-Echterhoff:Imprimitivity}*{Proposition~3.32}.
For the final assertion, it is enough to check that given $m\in A^G_c$, we have $(m\cdot a)*\xi=m\cdot (a*\xi)$ for all $a\in \F^G_c(A)$ and $\xi\in \contc(G,B)$. This
 follows from a simple computation using that $m$ is $G$-invariant.
\end{proof}

Observe that the canonical homomorphism $A^G_\pn\to \M(A^H_\pn)$ can be used to induce representations from $A^H_\pn$ to $A^G_\pn$.
The above proposition says that this corresponds to the induction process from representations of $A\rtimes_\pn H$ to representations of $A\rtimes_\pn G$
 via Green's imprimitivity bimodule.
This is especially interesting if the involved actions are saturated, in which case $\F^G_\pn(A)$ and $\F^H_\pn(A)$ are imprimitivity bimodules implementing Morita equivalences
$A^G_\pn\sim A\rtimes_\pn G$ and $A^H_\pn\sim A\rtimes_\pn H$, so that we get bijections between the spaces of representations
$\Rep(A^G_\pn)\cong \Rep(A\rtimes_\pn G)$ and $\Rep(A^H_\pn)\cong \Rep(A\rtimes_\pn H)$. In this situation the induction process
 $\Rep(A\rtimes_\pn H)\to \Rep(A\rtimes_\pn G)$ is therefore essentially equivalent to $\Rep(A^H_\pn)\to\Rep(A^G_\pn)$, but the latter might be easier to describe in some situations.

\section{Mansfield's Imprimitivity Theorem}\label{sec-Mansfield}

As a consequence of our previous results, we deduce Mansfield's Imprimitivity Theorem for both universal and reduced crossed-product norms.
So in what follows next we let $\delta:B\to M(B\otimes C^*(G))$ be a coaction of $G$ on the \cstar{}algebra $B$. Recall from
\S \ref{sec-prel} that $B\rtimes_{\delta}\dualG$ carries a canonical weakly proper $G\rtimes G$-structure $(B\rtimes_\delta\dualG, j_G,\widehat{\delta})$
which restricts to a $G\rtimes H$-structure $(B\rtimes_\delta\dualG, j_G,\widehat{\delta}|_H)$ for every closed subgroup $H$ of $G$.
Since right translation of $H$ on $G$ is free, we see that $\F_\pn^H(B\rtimes_\delta\dualG)$ implements a
$(B\rtimes_{\delta}\dualG)_\pn^H$--$(B\rtimes_{\delta}\dualG)\rtimes_\pn H$ imprimitivity bimodule for every crossed-product norm
$\|\cdot\|_\pn$ on $C_c(H, B\rtimes_{\delta}\dualG)$.

In what follows, we want to compare this result with the various versions of Mansfield's Imprimitivity Theorem for coactions which
give rise to Morita equivalences between $B_\pn\rtimes_{\delta_\pn|}\widehat{G/N}$ and $(B\rtimes_{\delta}\dualG)\rtimes_\pn N$,
where the notation $(B_\pn,\delta_\mu)$ indicates that we have to be careful about the type of coactions we may consider here.
Indeed, we shall restrict below to the two cases where $\|\cdot\|_\pn$ is either the universal norm $\|\cdot\|_u$ or the reduced
norm $\|\cdot\|_r$. Then, as explained in \S \ref{sec-prel}, $(B_u,\delta_u)$ is the maximalization and
$(B_r,\delta_r)$ is  the normalization of $(B,\delta)$. Recall that for any coaction $\delta:B\to M(B\otimes C^*(G))$, the restriction
$\delta|$ of $\delta$ to the quotient group $G/N$ is given by the composition
$$\delta|: B\stackrel{\delta}{\longrightarrow} M(B\otimes C^*(G))\stackrel{\id_B\otimes q_N}{\longrightarrow} M(B\otimes C^*(G/N)),$$
where $q_N:C^*(G)\to C^*(G/N)$ denotes the canonical quotient map.

\begin{theorem}[Mansfield's Imprimitivity Theorem]\label{thm:GPFA(BxG)N}
Let $(B,\delta)$ be a $G$-coaction and equip the crossed product $B\rtimes_\delta\dualG$ with the canonical weak $G\rtimes G$\nb-algebra structure. Then there are canonical
isomorphisms
$$B_r\rtimes_{\delta_r|}\widehat{G/N}\cong (B\rtimes_\delta\dualG)_r^N\quad\text{and}\quad
B_u\rtimes_{\delta_u|}\widehat{G/N}\cong (B\rtimes_\delta\dualG)_u^N.$$
In particular, if $(B,\delta)$ is normal, $\F_r^N(B\rtimes_\delta\dualG)$ becomes a
$B\rtimes_{\delta|}\widehat{G/N}$--$(B\rtimes_\delta\dualG)\rtimes_rN$ imprimitivity bimodule and if
$(B,\delta)$ is maximal, then $\F_u^N(B\rtimes_{\delta}\dualG)$ becomes a
$B\rtimes_{\delta|}\widehat{G/N}$--$(B\rtimes_\delta\dualG)\rtimes_uN$ imprimitivity bimodule.
\end{theorem}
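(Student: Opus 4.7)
The plan is to iterate $\pn$-duality (reviewed in \S\ref{sec-prel}) using the iterated fixed-point theorem (Theorem~\ref{thm:FixingStages}). Let $A\defeq B\rtimes_\delta\dualG$ carry its canonical weak $G\rtimes G$-structure $(\dual{\delta},j_G)$; restricting $\dual{\delta}$ to $N$ makes $A$ a weak $G\rtimes N$-algebra. Since $N$ is normal in $G$, Proposition~\ref{prop:FixingN} equips $A^N_\pn$ with a $G/N$-action $\dual{\delta}^N$, and the induced nondegenerate homomorphism $j_G^N\colon\contz(N\bs G)\cong \contz(G/N)\to \M(A^N_\pn)$ from \eqref{eq-phiN} turns $A^N_\pn$ into a weak $G/N\rtimes G/N$-algebra.

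Next I apply $\pn$-duality to the weak $G/N\rtimes G/N$-algebra $A^N_\pn$ (for $\pn=\un$ or $\pn=\red$): there is a coaction $\eta$ on $(A^N_\pn)^{G/N}_\pn$ and a canonical isomorphism
\[
(A^N_\pn)^{G/N}_\pn\rtimes_\eta\dualGN\cong A^N_\pn
\]
of weak $G/N\rtimes G/N$-algebras. By Theorem~\ref{thm:FixingStages}, $(A^N_\pn)^{G/N}_\pn\cong A^G_\pn$, and by the results of \cite{Buss-Echterhoff:Exotic_GFPA} recalled in \S\ref{sec-prel}, $A^G_\pn$ is $B_\un$ (resp.\ $B_\red$) with coaction $\delta_\un$ (resp.\ $\delta_\red$) when $\pn=\un$ (resp.\ $\pn=\red$). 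Pulling back $\eta$ along this identification gives a coaction on $B_\pn$ and an isomorphism $B_\pn\rtimes_\eta\dualGN\cong A^N_\pn$.

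The main obstacle I expect is to verify that $\eta$ coincides with the restricted coaction $\delta_\pn|$. To settle this, I would argue by naturality of the $\pn$-duality functor together with the explicit description of the iterated isomorphism in Theorem~\ref{thm:FixingStages}: the $C^*(G/N)$-leg of the coaction recovered from a weak $G/N\rtimes G/N$-algebra is built out of the structure map $j_G^N$ inherited from $j_G$, and this is precisely the datum whose Landstad-type reconstruction on $B_\pn\cong A^G_\pn$ yields $(\id_{B_\pn}\otimes q_N)\circ\delta_\pn=\delta_\pn|$, where $q_N\colon \Cst(G)\onto \Cst(G/N)$ is the quotient map. Equivalently, since both $\delta_\pn|$ and $\eta$ are $\pn$-duality coactions on $B_\pn$ producing the same weak $G/N\rtimes G/N$-algebra $A^N_\pn$, functoriality of $\pn$-duality forces them to agree.

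Finally, because the right translation action of $N$ on $G$ is free and proper, the tensor-product decomposition of $\F^N_\pn(B\rtimes_\delta\dualG)$ recalled in \S\ref{sec-prel} shows that it is full as a right Hilbert $(B\rtimes_\delta\dualG)\rtimes_{\dual{\delta},\pn}N$-module, and its algebra of compact operators is $(B\rtimes_\delta\dualG)^N_\pn$. Combining this with the isomorphism already established yields the stated Morita equivalences; taking $\pn=\red$ when $(B,\delta)$ is normal (so $B=B_\red$) and $\pn=\un$ when $(B,\delta)$ is maximal (so $B=B_\un$) gives the final assertions of the theorem.
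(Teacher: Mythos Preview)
Your strategy is exactly the paper's: set $A=B\rtimes_\delta\dualG$, use Theorem~\ref{thm:FixingStages} to identify $(A^N_\pn)^{G/N}_\pn\cong A^G_\pn\cong B_\pn$, apply Landstad/$\pn$-duality to the weak $G/N\rtimes G/N$-algebra $A^N_\pn$, and then chain the isomorphisms. The only substantive step you flag as an obstacle---showing that the reconstructed coaction $\eta$ on $(A^N_\pn)^{G/N}_\pn$ transports to $\delta_\pn|$ on $B_\pn$---is precisely the step the paper works out, and your first explanation points at the correct computation: the paper checks directly that $(j_G\otimes q_N)(\omega_G)=(\kappa\circ j_G^N\otimes\id)(\omega_{G/N})$ in $\M(A^N_\pn\otimes C^*(G/N))$, which immediately gives $\delta^G_\pn|=\delta^{G/N}_\pn$ on the dense subalgebra $A^G_c$.

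Your alternative ``equivalently'' argument, however, is circular: asserting that $\delta_\pn|$ and $\eta$ are both $\pn$-duality coactions producing the \emph{same} weak $G/N\rtimes G/N$-algebra $A^N_\pn$ presupposes an isomorphism $B_\pn\rtimes_{\delta_\pn|}\dual{G/N}\cong A^N_\pn$, which is the content of the theorem. Drop that sentence and carry out the $\omega_G$ computation instead; then your proof matches the paper's.
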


\begin{remark}\label{rem-Mans}
We should remark that the isomorphism for normal coactions has been established before
by Quigg and Raeburn in \cite[Proposition 4.1]{Quigg-Raeburn:Induced} in case where $N$ is amenable, and
shortly after that by Kaliszewski and Quigg in \cite{Kaliszewski-Quigg:Mansfield} for arbitrary closed
normal subgroups $N$. Both proofs rely heavily on Mansfield's original proof of his imprimitivity theorem
and they use the Mansfield algebra $\mathcal D\subseteq B\rtimes_{\delta}\dualG$ as a dense
\Star{}algebra which implements properness in Rieffel's sense (\cite{Rieffel:Proper}). So a priori, the fixed-point algebras
and the bimodules considered in those papers could be different from ours, but we shall see below that they are not.

The isomorphism for maximal coactions is new but a version of Mansfield's Imprimitivity Theorem
for maximal coactions has been shown by   Kaliszewski and Quigg in \cite{Kaliszewski-Quigg:Mansfield}
using quite different techniques. The above gives a unified treatment to all of these different versions and
does not rely on Mansfield's techniques.
\end{remark}

\begin{proof}[Proof of Theorem~\ref{thm:GPFA(BxG)N}] In what follows let $A:=B\rtimes_\delta\dualG$. It follows then from
 \cite[Theorem 4.6]{Buss-Echterhoff:Exotic_GFPA} that  $(B_u,\delta_u)\cong (A_u^G,\delta_u^G)$ and $(B_r,\delta_r)\cong
 (A_r^G,\delta_r^G)$  where the coactions $\delta_\pn^G$, with $\pn=u$ or $\pn=r$, are given by the formulas
 \begin{equation}\label{eq-coaction}
 \delta^G_\pn(m)=(j_G\otimes\id)(\omega_G)(m\otimes 1)(j_G \otimes\id)(\omega_G)^*\quad\mbox{for all }m\in A^G_c
 \end{equation}
 (see also \cite[Remark 4.14]{Buss-Echterhoff:Exotic_GFPA} for the correct interpretation of this formula).
  On the other hand, $A^N_\pn$ is a weak $G/N\rtimes G/N$-algebra and by
 Proposition \ref{prop:FixingInStages} we have a canonical isomorphism $\Psi :A^G_\pn\stackrel{\cong}{\longrightarrow} (A^N_\pn)^{G/N}_\pn$ given via the
 canonical inclusion of $A_c^G$ into $(A_\pn^N)_c^{G/N}$.
Applying  \cite[Theorem 4.6]{Buss-Echterhoff:Exotic_GFPA} again, we see that  $(A^N_\pn)^{G/N}_\pn$ carries a $G/N$-coaction $\delta_\pn^{G/N}$
given by
$$\delta_\pn^{G/N}(n)=(j_G^N\otimes\id)(\omega_{G/N})(n\otimes 1)(j_G^N\otimes\id)(\omega_{G/N})^*\quad\mbox{for all }n\in (A^N_\pn)_c^{G/N},$$
where $j_G^N\colon\contz(G/N)\to \M(A^N_\pn)$
is the structural homomorphism induced from $j_G\colon \contz(G)\to \M(A)$. It is given by
the equation $j_G^N(\EE^{\tau,N}(f))=\EE^{\alpha,N}(j_G(f))$ for all $f\in \contc(G)$.
We claim that $\delta_\pn^{G/N}$ corresponds to the restriction $\delta_\pn^{G}|$ of $\delta_\mu^G$ to the quotient $G/N$
via the isomorphism $A^G_\pn\cong (A^N_\pn)^{G/N}_\pn$.
Notice that $j_G^N$, once composed with the canonical homomorphism $\kappa\colon A^N_\pn\to \M(A)$,
coincides with the restriction of $j_G$ to $\contz(G/N)\sbe \contb(G)$, that is, $\kappa\circ j_G^N=j_G|_{\contz(G/N)}$.
Then, if $q_N\colon C^*(G)\to C^*(G/N)$ is the quotient map, we get
 $$(j_G\otimes q_N)(\omega_G)=(j_G\otimes\id)(\omega_{G/N})=(\kappa\circ j_G^N\otimes\id)(\omega_{G/N}),$$ so that
\begin{align*}
\delta^G_\pn|(m)&=(\id\otimes q)\circ \delta^G_\pn(m)=(\kappa\circ j_G^N\otimes\id)(\omega_{G/N})(m\otimes 1)(\kappa\circ j_G^N\otimes\id)(\omega_{G/N})^*\\
&=(\kappa\otimes\id)\big(\delta_\pn^{G/N}(m)\big),
\end{align*}
for $m\in A_c^G\subseteq (A_\pn^N)_c^{G/N}$, which proves the claim.

Now \cite[Theorem 4.6]{Buss-Echterhoff:Exotic_GFPA} applied to the weak $G/N\rtimes G/N$-algebra
$A_\pn^N$ gives an isomorphism $A^N_\pn\cong (A^N_\pn)^{G/N}_\pn\rtimes_{\delta^{G/N}_\pn}\dual{G/N}$ and if we combine
this with the $\widehat{G/N}$\nb-iso\-mor\-phisms $((A_\pn^N)_\pn^{G/N}, \delta_\pn^{G/N})\cong (A_\pn^G, \delta_\pn^G|)\cong (B_\pn,\delta_\pn)$
and the fact that $B_\pn\rtimes_{\delta_\pn}\dualG\cong B\rtimes_{\delta}\dualG$ for $\pn=u,r$, we finally obtain a chain of isomorphisms
$$(B\rtimes_\delta\dualG)_\pn^N=
A^N_\pn\cong (A^N_\pn)^{G/N}_\pn\rtimes_{\delta^{G/N}_\pn}\dual{G/N}\cong A_\mu^G\rtimes_{\delta_\mu|}\widehat{G/N}\cong
B_\mu\rtimes_{\delta_\mu|}\widehat{G/N}.$$
This finishes the proof.
\end{proof}

In what follows next, we want to compare our module $\F_\pn^H(B\rtimes_\delta\dualG)$ with Mansfield's original construction
in \cite{Mansfield:Induced} which provides us with
an explicit description of  a dense submodule of the $(B\rtimes_\delta\dualG)^H_\mu$--$(B\rtimes_\delta\dualG)\rtimes_\mu H$ bimodule
$\F_\mu^H(B\rtimes_\delta\dualG)$ and a subalgebra $\mathcal D^H$ sitting densely inside the fixed-point algebra
with compact supports $(B\rtimes_\delta\dualG)_c^H$ with respect to any chosen norm $\mu$ on $C_c(H, B\rtimes_\delta\dualG)$
as above.
\begin{notations}[{\cf~\cite{Mansfield:Induced}}]\label{not-Mansfield}
For a locally compact group $G$ we let $B(G)\cong C^*(G)^*$ denote the Fourier-Stieltjes algebra and we denote by
$A(G)\subseteq B(G)$ the Fourier algebra of $G$, \ie, the set of matrix coefficients of the regular representation $\lambda_G$ of $G$.
For  $w\in B(G)$ we let $\delta_w:B\to B$ denote the composition
$\delta_w(b)=(\id_B\otimes\, w)\circ \delta(b)$.
Let $A_c(G):=A(G)\cap C_c(G)\subseteq B(G)$.
For a compact subset $E\subseteq G$ we denote by $C_E(G)$ the set of functions $f\in C_c(G)$ with
support in $E$. Recall that $\EE^H: C_c(G)\to C_c(G/H)$ denotes the surjective linear map  given by
$$\EE^H(f)(gH)=\int_H f(gh)\;\dd h.$$
For $w\in A_c(G)$ and $E\subseteq G$ compact, let
 $$\mathcal D_{w,E, H}:=\overline{j_B(\delta_w(B))j_G(\EE^H(C_E(G)))}\subseteq \M(B\rtimes_\delta\dualG)$$
and
$$\mathcal D_H:=\bigcup\big\{\mathcal D_{(w,E, H)}: w\in A_c(G), E\subseteq G\;\text{compact}\big\}.$$
We call $\mathcal D_H$ the {\em $H$-Mansfield subalgebra of $\M(B\rtimes_\delta\dualG)$}.
We simply write $\mathcal D$ in case where $H=\{e\}$.
\end{notations}

We should note for later use that the general assumption on our coactions being nondegenerate -- in the
sense that $\delta(B)(1_B\otimes C^*(G))=B\otimes C^*(G)$ -- implies that
\begin{equation}\label{eq-Bc}
B_c:=\delta_{A_c(G)}(B)=\{\delta_w(B): w\in A_c(G)\}
\end{equation}
is norm dense in $B$. This follows from  \cite[Theorem 5]{Katayama:Takesaki_Duality} together with fact
that $A_c(G)$ is weak-* dense in $A(G)$.
Mansfield shows  the following:

\begin{lemma}\label{lem-Mansfield} Let $(B,\delta)$ be a coaction of $G$ and let $H$ be a closed subgroup of $G$.
Then
\begin{enumerate}
\item $\D_H$ is a dense \Star{}subalgebra of $C^*\big(j_B(B)j_G(C_0(G/H))\big)\subseteq \M(B\rtimes_\delta\dualG)$ and we have
$\D_H=j_G(C_c(G/H))\cdot\D_H=\D_H\cdot j_G(C_c(G/H))$.
\item  $\mathcal D$ is a dense \Star{}subalgebra of $B\rtimes_\delta\dualG$.
\end{enumerate}
\end{lemma}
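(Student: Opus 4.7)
Part (2) is the $H=\{e\}$ case of (1): there $\EE^{\{e\}}=\id$, and by construction of the crossed product the $C^*$-algebra generated by $j_B(B) j_G(C_0(G))$ is all of $B\rtimes_\delta\dualG$. So it suffices to prove (1), which I would do in three steps: density, closure under the \Star{}algebra operations, and the bimodule identity.

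For density I would use the two approximation facts already at hand. First, (\ref{eq-Bc}) gives that $\delta_{A_c(G)}(B)$ is norm-dense in $B$. Second, the Weil formula (\ref{iterated-int}) makes $\EE^H\colon C_c(G)\twoheadrightarrow C_c(G/H)$ surjective, and $C_c(G/H)$ is norm-dense in $C_0(G/H)$. Combined, these show that every generator $j_B(b) j_G(\bar f)$ of $C^*(j_B(B) j_G(C_0(G/H)))$ is a norm-limit of elements from the $\mathcal{D}_{w,E,H}$, so $\mathcal{D}_H$ is norm-dense in this $C^*$-algebra as soon as we know it is a \Star{}subalgebra.

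For the \Star{}algebra structure I would exploit the covariance identity
\[
(j_B\otimes\id)(\delta(b)) = (j_G\otimes\id)(\omega_G)\bigl(j_B(b)\otimes 1\bigr)(j_G\otimes\id)(\omega_G)^*,
\]
sliced by elements of $A(G)$, to produce a Landstad-type commutation rewriting $j_G(g)\, j_B(\delta_w(b))$ (for $g\in C_c(G)$, $w\in A_c(G)$, $b\in B$) as a norm-convergent combination of terms $j_B(\delta_{w'}(b'))\, j_G(g')$ with $w'\in A_c(G)$ and $g' \in C_c(G)$ of controlled support. Inserting this into a product of two generators, and using that $A_c(G)=A(G)\cap C_c(G)$ is closed under pointwise multiplication (since $A(G)$ is), yields multiplicative closure. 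For involutive closure I use $(j_B(\delta_w(b)) j_G(\EE^H(f)))^* = j_G(\overline{\EE^H(f)})\, j_B(\delta_{\check w}(b^*))$ with $\check w(t) := \overline{w(t^{-1})} \in A_c(G)$, followed by one more application of the commutation. The bimodule identity $\mathcal{D}_H = j_G(C_c(G/H))\mathcal{D}_H = \mathcal{D}_H j_G(C_c(G/H))$ follows: for $\bar f = \EE^H(f)\in C_c(G/H)$ pick a bump $\bar\varphi\in C_c(G/H)$ equal to $1$ on $\supp \bar f$, so $j_G(\bar\varphi)j_G(\bar f)=j_G(\bar f)$; the opposite inclusion is built into the definition of $\mathcal{D}_{w,E,H}$.

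The main technical obstacle is the Landstad-type commutation: deriving it from covariance is standard, but carrying it out while keeping the new coefficient in $A_c(G)$ and the new $g'$ compactly supported requires a careful slicing argument with bump functions (essentially the technical core of Mansfield's original computation in \cite{Mansfield:Induced}). All other steps are then formal bookkeeping.
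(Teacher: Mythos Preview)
Your plan is essentially correct and in fact goes well beyond what the paper does: the paper's own ``proof'' of this lemma consists entirely of citations to Mansfield's original paper \cite{Mansfield:Induced} (Lemmas~9 and~11 and Theorem~12), with no argument given. Your sketch reconstructs the outline of Mansfield's proof, correctly identifying the Landstad-type commutation relation (rewriting $j_G(g)j_B(\delta_w(b))$ as a combination of terms $j_B(\delta_{w'}(b'))j_G(g')$ with controlled supports) as the technical heart of the matter, and correctly deferring its details to \cite{Mansfield:Induced}.

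One small imprecision: when you say ``the opposite inclusion is built into the definition of $\mathcal D_{w,E,H}$'', this is only immediately clear for the \emph{right} action $\D_H\cdot j_G(C_c(G/H))\subseteq\D_H$ (since $\EE^H(f)\cdot\bar g\in C_c(G/H)$). The \emph{left} inclusion $j_G(C_c(G/H))\cdot\D_H\subseteq\D_H$ is not visible from the definition alone; it requires either the commutation relation again or, more cleanly, the $*$-closure you have already established (since then $j_G(C_c(G/H))\D_H=(\D_H\, j_G(C_c(G/H)))^*\subseteq\D_H^*=\D_H$). This is a trivial fix, but worth stating since the bimodule identity is used later in the paper.
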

\begin{proof}
The first assertion in (1) follows from \cite[Lemma 11]{Mansfield:Induced} and the equation
$\D_H=j_G(C_c(G/H))\cdot\D_H=\D_H\cdot j_G(C_c(G/H))$ is a consequence of \cite[Lemma 9]{Mansfield:Induced} (see also \cite{anHuef-Raeburn:Mansfield}*{Lemma~3.2}).
Note that Mansfield proved both lemmas in the full generality of arbitrary closed subgroups $H$ of $G$.
Item (2) follows from \cite[Theorem 12]{Mansfield:Induced} in the special case $H=\{e\}$.
\end{proof}

In the special case where $H=G$ we get the \Star{}algebra
$\D_G=\bigcup_{w\in A_c(G)} \overline{j_B(\delta_w(B))}$.
This \Star{}algebra has an easier description:

\begin{lemma}\label{lem-B0}
$B_c=\delta_{A_c(G)}(B)$ is a dense \Star{}subalgebra of $B$ and $j_B:B_c\to \D_G$ is an isomorphism of \Star{}algebras.
In particular, $\D_G=j_B(\delta_{A_c(G)}(B))$.
\end{lemma}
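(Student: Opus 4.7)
The density of $B_c$ in $B$ is already noted in \eqref{eq-Bc}, and $j_B$ is a \Star{}homomorphism, so the plan is to verify three things: (a) $B_c$ is closed under products and involution, (b) $j_B$ is injective on $B_c$, and (c) $j_B(B_c) = \D_G$. The workhorse for (a) and (b) is the identity $\delta_z \circ \delta_w = \delta_{zw}$ for $z,w \in B(G)$, where $zw$ is the pointwise product in $B(G)$; this follows from coassociativity of $\delta$ together with $(z \otimes w) \circ \delta_G = zw$ on generators $u_t \in C^*(G)$.

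For (a), closure under involution is immediate from $\delta_w(b)^* = \delta_{\bar w}(b^*)$ with $\bar w \in A_c(G)$. Closure under addition and multiplication will exploit the regularity of the Fourier algebra: for any compact $K \subseteq G$ one can find $z \in A_c(G)$ which is identically $1$ on $K$. Given $u,v \in A_c(G)$, taking $K = \supp u \cup \supp v$ gives $zu = u$ and $zv = v$, so $\delta_z\bigl(\delta_u(a)+\delta_v(b)\bigr) = \delta_u(a)+\delta_v(b)$, placing the sum in $\delta_z(B) \subseteq B_c$. For the product, a short leg-notation computation shows $\delta_z\bigl(\delta_u(a)\delta_v(b)\bigr) = (\id \otimes u \otimes v)\bigl(\delta(a)_{12}\delta(b)_{13}\bigr) = \delta_u(a)\delta_v(b)$ whenever $z \equiv 1$ on the compact set $\supp u \cdot \supp v$; hence $\delta_u(a)\delta_v(b) \in \delta_z(B) \subseteq B_c$ as well.

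For (b), since the regular representation $\Lambda \colon B \rtimes_\delta \dualG \hookrightarrow \M(B \otimes \K(L^2G))$ is faithful, one has $\ker j_B = \ker \tilde\delta$ with $\tilde\delta := (\id \otimes \lambda)\delta$. Given $w \in A_c(G)$, I choose $z \in A_c(G)$ with $zw = w$ and realize $z$ as a matrix coefficient $z(\cdot) = \langle \lambda(\cdot)\xi,\eta\rangle$ of $\lambda$. The vector functional $\omega_z(T) := \langle T\xi,\eta\rangle$ on $\mathcal B(L^2G)$ then satisfies $\omega_z \circ \lambda = z$, whence $(\id \otimes \omega_z)\tilde\delta(c) = (\id \otimes z)\delta(c) = \delta_z(c)$ for every $c \in B$. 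Applied to $c = \delta_w(b)$ and using $\delta_z\delta_w = \delta_{zw} = \delta_w$, this reads $(\id \otimes \omega_z)\tilde\delta(\delta_w(b)) = \delta_w(b)$, so $\tilde\delta(\delta_w(b)) = 0$ forces $\delta_w(b) = 0$.

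The main obstacle is (c): the inclusion $j_B(B_c) \subseteq \D_G$ is clear, but the reverse requires removing the norm closure in the definition of $\D_G$. My plan is the following. Given $m \in \overline{j_B(\delta_w(B))}$, pick $v \in A_c(G)$ with $vw = w$. The covariance identity $(j_B \otimes \id)\delta = V(j_B \otimes 1)V^*$, with $V := (j_G \otimes \id)(\omega_G)$, yields a norm-continuous map $\Psi_v \colon \M(B \rtimes_\delta \dualG) \to \M(B \rtimes_\delta \dualG)$ defined by $\Psi_v(T) := (\id \otimes v)\bigl(V(T \otimes 1)V^*\bigr)$ and satisfying $\Psi_v \circ j_B = j_B \circ \delta_v$. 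Because $\delta_v\delta_w = \delta_{vw} = \delta_w$, the map $\Psi_v$ fixes every $j_B(\delta_w(b))$, hence fixes $m$ by continuity. Since $j_B(B)$ is closed in $\M(B \rtimes_\delta \dualG)$ (being the image of a \Star{}homomorphism), one gets $m = j_B(c)$ for some $c \in B$, and then $m = \Psi_v(j_B(c)) = j_B(\delta_v(c)) \in j_B(\delta_v(B)) \subseteq j_B(B_c)$. The subtlety lurking here is that when $\delta$ is not normal, $j_B$ has nontrivial kernel and limits in $\M(B \rtimes_\delta \dualG)$ cannot be pulled back to $B$ directly; the multiplier-level operator $\Psi_v$ is introduced precisely to bypass this difficulty.
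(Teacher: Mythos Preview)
Your proof is correct and follows essentially the same strategy as the paper's: both establish that $B_c$ is a \Star{}subalgebra via the identity $\delta_z\delta_w=\delta_{zw}$ (the paper cites Mansfield's Lemma~1 for this), and both prove injectivity of $j_B|_{B_c}$ by showing $\ker j_B\subseteq\ker\delta_v$ for $v\in A(G)$. The only notable difference is in the surjectivity step (c): the paper lifts a convergent sequence $j_B(\delta_w(b_n))\to j_B(b)$ back to $B$ by choosing correction terms $c_n\in\ker j_B$ with $\delta_w(b_n)+c_n\to b$, then applies $\delta_v$ in $B$; you instead stay in $\M(B\rtimes_\delta\dualG)$ and build the norm-continuous operator $\Psi_v$ directly from the covariance relation, which is a clean way to avoid the lifting step. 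Both arguments rest on the same two facts---$\delta_v\delta_w=\delta_w$ when $vw=w$, and $j_B(B)$ is closed---so the routes are really the same idea in slightly different packaging.
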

\begin{proof} We already observed above that $B_c=\delta_{A_c(G)}(B)$ is dense in $B$ and the fact that it is a \Star{}subalgebra
of $B$ follows from items (ii)-(iv) of \cite[Lemma 1]{Mansfield:Induced}. To see, for instance, that $B_c$ is a vector subspace, observe that, by \cite[Lemma 1(ii)]{Mansfield:Induced}, if $b_1,b_2\in B$, $v_1,v_2\in A_c(G)$ and $w\in A_c(G)$ is such that $w=1$ on $\supp(v_1)\cup\supp(v_2)$, then $\delta_{v_1}(a_1)+\delta_{v_2}(a_2)=\delta_{wv_1}(a_1)+\delta_{wv_2}(a_2)
=\delta_w(\delta_{v_1}(a_1)+\delta_{v_2}(a_2))$. Items (iii) and (iv) in \cite[Lemma 1]{Mansfield:Induced} imply that $B_c$ is  a \Star{}subalgebra of $B$.

To show that $j_B$ gives an \Star{}isomorphism $B_c\congto \D_G$, we first show that it is surjective, that is, $j_B(\delta_{A_c(G)}(B))=\D_G$.
For this assume that $w\in A_c(G)$ is fixed and that $(b_n)_n$
is a sequence in $B$ and $b\in B$ such that $j_B(\delta_w(b_n))\to j_B(b)$. It suffices to show that $j_B(b)=j_B(\delta_v(b))$
for some $v\in A_c(G)$.

We first note that $I=\ker j_B$ is annulated
by $\delta_w:B\to B$ for any $w\in A(G)$. This follows from the fact that the kernel $\ker\lambda_G\subseteq C^*(G)$
is annulated by the elements in $A(G)$ viewed as linear functionals of $C^*(G)$. Indeed, if we realize
$B\rtimes_\delta\dualG$ as a subalgebra of $\M(B\otimes \K(L^2G))$ via the covariant representation
\mbox{$((\id\otimes\lambda_G)\circ \delta, 1\otimes M)$}, we see that $\ker j_B=\ker(\id_B\otimes\lambda_G)\circ \delta$.
Moreover, since for $w\in A(G)$ the linear functional on $C^*(G)$ associated to $w$ factors through a functional $w_r$ of
$C_r^*(G)$, we see that
$\delta_w$ is given by the
composition
$$\delta_w=(\id_B\otimes \,w_r)\circ (\id_B\otimes \lambda_G)\circ \delta.$$
Hence $\ker j_B=\ker (\id_B\otimes\lambda_G)\circ \delta\subseteq \ker \delta_w$.
Suppose now that
 $j_B(\delta_w(b_n))\to j_B(b)$. By passing to a subsequence, if necessary,
we may choose elements $c_n\in \ker j_B$ with $\delta_w(b_n)+c_n\to b$ in $B$.
Now let $v\in A_c(G)$ such that $v\equiv 1$ on $\supp w$. Then
$$\delta_w(b_n)=\delta_{vw}(b_n)=\delta_v(\delta_w(b))=\delta_v(\delta_w(b_n)+c_n)\to \delta_v(b),$$
and hence $j_B(\delta_w(b_n))\to j_B(\delta_v(b))$, which proves that $j_B(b)=j_B(\delta_v(b))\in j_B(B_c)$,
hence $j_B(B_c)=\D_G$.

We now use item (ii) of \cite[Lemma 1]{Mansfield:Induced}, that is, the fact that $\delta_{wv}(b)=\delta_w(\delta_v(b))$ for all $w,v\in A_c(G)$,
to show that $j_B:B_c\to \D_G$ is injective and hence an isomorphism of \Star{}algebras. For this assume that $j_B(\delta_w(b))=0$
for some $w\in A_c(G)$ and $b\in B$. Let $v\in A_c(G)$ such that $vw=w$. Since $\ker j_B\subseteq \ker \delta_v$,
we get $0=\delta_v(\delta_w(b))=\delta_{vw}(b)=\delta_w(b)$, and the result follows.
\end{proof}

For later use, we should also note that $\D_H$ is a bimodule over $B_c\cong \D_G$
with bimodule operations given by the usual multiplication inside $\M(B\rtimes_\delta\dualG)$, that is,
\begin{equation}\label{eq-multiply}
\delta_w(b)\cdot d=j_B(\delta_w(b))d\quad\text{and}\quad d\cdot\delta_w(b)=d j_B(\delta_w(b)),
\end{equation}
for $d\in \D_H$. This gives a canonical imbedding of $B_c=\delta_{A_c(G)}(B)$ into the (algebraic) multiplier
algebra $\M(\D_H)$.

Recall from \cite{Buss-Echterhoff:Exotic_GFPA} that for any weakly proper $G\rtimes H$-algebra $A$
and for any given crossed-product norm $\|\cdot\|_\pn$ on $C_c(H,A)$, the
$A_\pn^H$--$A\rtimes_{\pn}H$-imprimitivity bimodule $\F_\pn(A)$ is given as the completion of the
pre-imprimitivity $A_c^H$--$C_c(H,A)$ bimodule  $\F_c(A):= C_c(G)\cdot A$.
Moreover, we showed in \cite[Lemma 2.12]{Buss-Echterhoff:Exotic_GFPA} that all bimodule operations are
continuous with respect to the inductive limit topologies on $A_c^H$, $C_c(H,A)$ and $\F_c(A)$, respectively,
and that in all three spaces, inductive limit convergence implies norm-convergence in their
respective completions for any chosen norm $\|\cdot\|_\pn$.
Similarly, the canonical "conditional expectation"
$$\EE^H: A_c\to A_c^H,\quad \EE^H(x)=\int_H^{st}\alpha_t(x)\,\dd t,$$
is inductive limit continuous on $A_c=C_c(G)\cdot A\cdot C_c(G)$ and we have $A_c^H=\EE^H(A_c)$.
Recall also that the inductive limit topology on $C_c(H,A)$ is the usual one, and that a net $(a_i)_{i\in I}$
in $\F_c(A)$  (resp. in $A_c$)
 converges in the inductive limit topology to some $a$ if it converges  to $a$ in norm and there exists an $f\in C_c(G)$
 such that $a_i=f\cdot a_i$ (resp. $a_i=f\cdot a_i\cdot f$) for all $i\in I$.
 Similarly, a net $(b_i)_{i\in I}$ in $A_c^H$ converges to $b\in A_c^H$ in the inductive limit topology, if
it converges in $\M(A)$ in norm and if the following are satisfied
\begin{enumerate}
\item there exists a $\psi\in C_c(G/H)$ such that $\psi\cdot b_i\cdot\psi=b_i$ for all $i\in I$, and
\item for all $f\in C_c(G)$ the net $b_i\cdot f$ converges to $b\cdot f$ in the inductive limit topology of $A_c$.
\end{enumerate}
(recall that $b\cdot f\in A_c$ for all $b\in A_c^H$ and $f\in C_c(G)$, where the multiplication is performed inside $\M(A)$).

Recall that we always use the notation $f\cdot a$ for $\phi(f)a$ if $\phi:C_0(G)\to \M(A)$ is the given structure map.
In case where $A=B\rtimes_\delta\dualG$, this structure map is given by the canonical map
$j_G:C_0(G)\to \M(B\rtimes_\delta\dualG)$. Thus we get the following:

\begin{lemma}\label{lem-compact supports}
Let $(B,\delta)$ be a coaction of $G$ and let $H$ be a closed subgroup of $G$. Then
\begin{enumerate}
\item  $\D$ is inductive limit dense in both  $(B\rtimes_\delta\dualG)_c$ and $\F_c(B\rtimes_\delta\dualG)$.
\item  $\D_H$ is inductive limit
 dense in $(B\rtimes_\delta\dualG)_c^H$.
 \end{enumerate}
 In particular, every generalized fixed-point algebra
 $(B\rtimes_\delta\dualG)_\mu^H$ is a norm completion of $\D_H$ for some suitable norm.
\end{lemma}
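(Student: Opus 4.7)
The strategy is to derive (1) from Lemma~\ref{lem-Mansfield} via a direct sandwiching argument, and then reduce (2) to (1) using the inductive-limit-continuous conditional expectation $\EE^H$.

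For (1), I would first observe that $\D\subseteq A_c$, where $A\defeq B\rtimes_\delta\dualG$: any element of $\D$ can, by two applications of Lemma~\ref{lem-Mansfield}(1), be written as $j_G(f_1)\cdot d'\cdot j_G(f_2)$ with $f_i\in \contc(G)$ and $d'\in \D\subseteq A$, hence lies in $j_G(\contc(G))\cdot A\cdot j_G(\contc(G)) = A_c$. To prove inductive-limit density, take a typical $a = j_G(f_1)\cdot x\cdot j_G(f_2)\in A_c$ with $x\in A$ and $f_i\in \contc(G)$, choose $d_n\in \D$ with $d_n\to x$ in norm (using density of $\D$ in $A$ from Lemma~\ref{lem-Mansfield}(2)), and set $a_n\defeq j_G(f_1)\cdot d_n\cdot j_G(f_2)$. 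By Lemma~\ref{lem-Mansfield}(1) each $a_n$ still lies in $\D$; they converge to $a$ in norm; and picking $\psi\in\contc(G)$ with $\psi f_i = f_i$ yields a common cutoff $j_G(\psi)\cdot a_n\cdot j_G(\psi) = a_n$, which is precisely inductive-limit convergence in $A_c$. The density of $\D$ in $\F_c(A) = j_G(\contc(G))\cdot A$ is proved the same way with a one-sided sandwich, using only $\D = j_G(\contc(G))\cdot \D$.

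For (2), I would reduce to (1) via the surjective, inductive-limit-continuous map $\EE^H\colon A_c\to A_c^H$ from \cite[Lemma~2.12]{Buss-Echterhoff:Exotic_GFPA}. Combined with (1), this immediately gives that $\EE^H(\D)$ is inductive-limit dense in $\EE^H(A_c) = A_c^H$, so everything reduces to the two inclusions $\EE^H(\D)\subseteq \D_H \subseteq A_c^H$. For the first I would compute on a generator $d = j_B(\delta_w(b))\cdot j_G(\phi)\in \D$ with $w\in A_c(G)$ and $\phi\in \contc(G)$: since $\widehat\delta_h\circ j_B = j_B$ and $\widehat\delta_h(j_G(\phi)) = j_G(\tau_h(\phi))$, the factor $j_B(\delta_w(b))$ pulls out of the strict integral, yielding
\[
\EE^H(d) = j_B(\delta_w(b))\cdot j_G\Bigl(\int_H^{\st}\tau_h(\phi)\,\dd h\Bigr) = j_B(\delta_w(b))\cdot j_G(\EE^H(\phi)),
\]
which belongs to $\D_{w,\supp\phi,H}\subseteq \D_H$ by definition. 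For the second inclusion, the identity $\D_H = j_G(\contc(G/H))\cdot\D_H\cdot j_G(\contc(G/H))$ of Lemma~\ref{lem-Mansfield}(1) reduces the question to checking that a typical $d'\in \D_H$ lies in the ``middle set'' $\{m\in \M(A)^H : m\cdot \contc(G), \contc(G)\cdot m\subseteq A_c\}$: $H$-invariance is immediate from $\widehat\delta|_H$-invariance of $j_B$ and of $j_G(\EE^H(\phi))$, and the $A_c$-absorption follows from $d'\cdot j_G(f) = j_B(\delta_w(b))\cdot j_G(\EE^H(\phi)\, f)\in \D\subseteq A_c$ (by (1)) and similarly on the left.

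The main obstacle is the bookkeeping for the inclusion $\D_H\subseteq A_c^H$: one must simultaneously control $H$-invariance, the compact support of $\EE^H(\phi)$ modulo $H$, and the absorption property $m\cdot \contc(G), \contc(G)\cdot m\subseteq A_c$, all of which hinge on combining the compact supports of $w\in A_c(G)$ and $\phi\in\contc(G)$ through Mansfield's module identities. Once (1) and (2) are established, the final assertion is immediate: $A^H_\mu$ is by construction the $\|\cdot\|_\mu$-completion of $A^H_c$, and since inductive-limit convergence implies $\|\cdot\|_\mu$-norm convergence in any such completion, part~(2) yields norm-density of $\D_H$ in $A^H_\mu$ for every admissible crossed-product norm.
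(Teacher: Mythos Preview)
Your proof is correct and follows essentially the same approach as the paper. Part~(1) is identical: the paper also argues via $\D=j_G(\contc(G))\cdot\D\cdot j_G(\contc(G))$ and norm density of $\D$ in $A$. For part~(2), the paper likewise verifies $\D_H\subseteq A_c^H$ exactly as you do, and obtains density via the chain $_{A_c^H}\bbraket{\D}{\D}\subseteq\EE^H(\D)\subseteq\D_H$ together with inductive-limit density of $_{A_c^H}\bbraket{\D}{\D}$ in $A_c^H$; since $_{A_c^H}\bbraket{\xi}{\eta}=\EE^H(\xi\eta^*)$, this is just your direct $\EE^H$-argument rephrased through the left inner product, so the two routes are equivalent.
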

\begin{proof}
Since $\D$ is norm dense in $B\rtimes_\delta\dualG$ it follows that
$\D=j_G(C_c(G))\cdot \D\cdot j_G(C_c(G))$ is inductive limit dense in $j_G(C_c(G))\cdot (B\rtimes_\delta\dualG)\cdot j_G(C_c(G))=(B\rtimes_\delta\dualG)_c$
and a similar argument shows density in $\F_c(B\rtimes_\delta\dualG)$.
To check that $\D_H$ is a subalgebra of  $(B\rtimes_\delta\dualG)_c^H$, we first observe that $\D_H$ lies in the
(classical) fixed-point algebra $\M(B\rtimes_\delta\dualG)^H$. Moreover, multiplying
$\D_H=j_G(C_c(G/H))\cdot \D_H\cdot j_G(C_c(G/H))$ with $j_G(f)$ for some $f\in C_c(G)$ from either side
gives an element in $\D\subseteq (B\rtimes_\delta\dualG)_c$. Thus it follows easily from the definition of the
fixed-point algebra with compact supports as given in (\ref{eq:GFPAwithCompSupp}) that
$\D_H\subseteq (B\rtimes_\delta\dualG)_c^H$.
Since
$$_{(B\rtimes_\delta\dualG)^H_c}\bbraket{\D}{\D}\subseteq \EE(\D)\subseteq \D_H$$
and since $_{(B\rtimes_\delta\dualG)^H_c}\bbraket{\D}{\D}$ is inductive limit dense in
$(B\rtimes_\delta\dualG)^H_c$ due to the facts that $\D$ is inductive limit dense in $(B\rtimes_\delta\dualG)_c$
and that every element in $(B\rtimes_\delta\dualG)^H_c$ can be written as an inner product
of two elements in $(B\rtimes_\delta\dualG)_c$, it  follows that $\D_H$ is inductive limit dense
in $(B\rtimes_\delta\dualG)^H_c$. The final assertion now follows from the fact that inductive limit convergence
implies norm convergence in $(B\rtimes_\delta\dualG)^H_\pn$ with respect to any given crossed-product norm $\|\cdot\|_\pn$
on $C_c(H, B\rtimes_\delta\dualG)$.
\end{proof}

\begin{remark}\label{rem-Bc} In the case $H=G$, the above result shows that for any coaction $(B,\delta)$ of $G$,
the dense \Star{}subalgebra $B_c=\delta_{A_c(G)}(B)$ of $B$ maps faithfully onto the inductive limit dense
\Star{}subalgebra $j_B(B_c)=\D_G$ of $(B\rtimes_\delta\dualG)_c^G$.
It follows then from the above lemma that  for a given
norm $\|\cdot\|_\pn$
on $C_c(G, B\rtimes_\delta\dualG)$ the
$\pn$-fixed-point algebra $B_\pn:=(B\rtimes_\delta\dualG)_\pn^G$ can be obtained
as a completion of $B_c\cong \D_G$ with respect to a suitable norm induced from
$\|\cdot\|_\mu$ via the bimodule $\F_\mu(B\rtimes_\delta\dualG)$. In this picture,
the canonical epimorphisms
$$B_u\onto B\onto B_r,$$
with $B_u:=(B\rtimes_\delta\dualG)_u^G$ and $B_r:=(B\rtimes_\delta\dualG)_r^G$, respectively,
are given by the identity  map on $B_c$ and it is
 easy to check that these maps are $\dualG$-equivariant
with respect to the coactions $\delta_u, \delta$, and $\delta_r$, respectively (use $(B,\delta)\cong (B_\mu,\delta_\mu)$
for a suitable crossed-product norm $\|\cdot\|_\mu$ and formula (\ref{eq-coaction})).
This gives a very concrete picture for the  maximalization $(B_u,\delta_u)$ and the normalization
$(B_r,\delta_r)$
of $(B,\delta)$ and their connections to the given coaction $(B,\delta)$.
\end{remark}

Suppose now that $H=N$ is a closed {\em normal} subgroup of $G$, and $\delta$ is an arbitrary coaction of $G$ on $B$.
Consider the representation $j_B\rtimes j_G|:B\rtimes_{\delta|}\widehat{G/N}\to\M(B\rtimes_\delta\dualG)$.
It is clear that it maps the dense subset $i_B(B_c)i_{G/N}(C_c(G/N))$ of
$B\rtimes_{\delta|}\widehat{G/N}$ onto the dense subspace $j_B(B_c)j_G(C_c(G/N))$ of $\D_N$ (where $(i_B, i_{G/N})$
denotes the canonical covariant representation of $(B,\delta|)$ into $\M(B\rtimes_{\delta|}\widehat{G/N})$), which implies
that $j_B\rtimes j_G|$ maps $B\rtimes_{\delta|}\widehat{G/N}$ onto the closure $(B\rtimes_\delta\dualG)_r^N$ of $D_N$ in $\M(B\rtimes_\delta\dualG)$, that is,
$$\im(j_B\rtimes j_G|)\cong (B\rtimes_\delta\dualG)^N_\red.$$
Moreover, by Lemma~3.1 in \cite{Kaliszewski-Quigg:Imprimitivity}, $j_B\rtimes j_G|$ is faithful if and only if
$\ker(j_B)\sbe \ker(i_B)$ (this also follows from our Lemma~\ref{lem:injectivityCovRep}). In particular, if $\delta$ is a normal coaction, \ie, if
$j_B:B\to \M(B\rtimes_\delta\dualG)$ is injective, then $j_B\rtimes j_G|$ can be viewed as an isomorphism:
$$B\rtimes_{\delta|}\widehat{G/N}\congto (B\rtimes_\delta\dualG)^N_\red.$$
This gives an alternative (and probably more concrete) description of the
isomorphism  $B\rtimes_{\delta|}\widehat{G/N}\cong (B\rtimes_\delta\dualG)_r^N$ of Theorem~\ref{thm:GPFA(BxG)N}
for normal coactions.

In case of maximal coactions $\delta=\delta_u$, there are canonical \Star{}homomorphisms
$$l_B: B\to \M((B\rtimes_\delta\dualG)_u^N)\quad\text{and}\quad
l_{G/N}: C_0(G/N)\to  \M((B\rtimes_\delta\dualG)_u^N)$$
in which for $b\in B_c$, the element $l_B(b)$ acts on the dense subalgebra $\D_N$ via multiplication
inside $\M(B\rtimes_\delta\dualG)$ (see equation (\ref{eq-multiply})). Similarly,
 if $f\in C_c(G/N)$, then $l_{G/N}(f)$ is  determined via the  obvious left and right actions
of $j_G(f)$ on $\D_N$. The following corollary is then a straightforward consequence
 of Theorem~\ref{thm:GPFA(BxG)N}:

\begin{corollary}\label{cor-max}
Let $(B,\delta)$ be a maximal coaction of $G$ and let $N$ be a normal subgroup of $G$. Then
there is a unique covariant homomorphism $(l_B,l_{G/N})$ of $(B,\delta|)$ into
 $\M((B\rtimes_\delta\dualG)_u^N)$  given on $B_c$ and $C_c(G/N)$ as above such that
 the integrated form $l_B\rtimes l_{G/N}$ implements  the isomorphism
$$l_B\rtimes l_{G/N}:B\rtimes_{\delta|}\widehat{G/N}\to (B\rtimes_\delta\dualG)_u^N$$
of Theorem~\ref{thm:GPFA(BxG)N}.
\end{corollary}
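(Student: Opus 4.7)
Since $(B,\delta)$ is maximal, $(B,\delta) = (B_u,\delta_u)$, and Theorem~\ref{thm:GPFA(BxG)N} specializes to an isomorphism $\Phi\colon B\rtimes_{\delta|}\widehat{G/N} \congto (B\rtimes_\delta\dualG)_u^N$. The plan is to set $l_B := \bar\Phi\circ i_B$ and $l_{G/N} := \bar\Phi\circ i_{G/N}$, where $(i_B, i_{G/N})$ is the canonical covariant pair for $(B,\delta|)$ inside $\M(B\rtimes_{\delta|}\widehat{G/N})$ and $\bar\Phi$ is the strict extension of $\Phi$ to multiplier algebras. Covariance of $(l_B, l_{G/N})$ with respect to $\delta|$ and the identity $l_B\rtimes l_{G/N} = \Phi$ are then automatic from the universal property of the crossed product. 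Uniqueness is also immediate, since $B_c$ is norm dense in $B$ (see~\eqref{eq-Bc}) and $C_c(G/N)$ is dense in $C_0(G/N)$, so any bounded \Star{}homomorphism out of $B$ or $C_0(G/N)$ is determined by its restriction to these subsets. What remains is thus to verify that $\bar\Phi\circ i_B$ and $\bar\Phi\circ i_{G/N}$ implement the multiplication prescriptions stated in the corollary.

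For $l_B$, I would unpack the chain of isomorphisms
\[
B\rtimes_{\delta|}\widehat{G/N}\;\cong\; A_u^G\rtimes_{\delta_u^G|}\widehat{G/N}\;\cong\;(A_u^N)_u^{G/N}\rtimes_{\delta_u^{G/N}}\widehat{G/N}\;\cong\; A_u^N
\]
(with $A := B\rtimes_\delta\dualG$) extracted from the proof of Theorem~\ref{thm:GPFA(BxG)N}. The first arrow is the identification $B\cong A_u^G$ of \cite{Buss-Echterhoff:Exotic_GFPA}*{Theorem~4.6}, which by Lemma~\ref{lem-B0} carries $b\in B_c$ to $j_B(b) \in \D_G \subseteq A_c^G$. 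The second arrow is the identity on $A_c^G$ by Theorem~\ref{thm:FixingStages}, so it preserves $j_B(b)$. The third arrow, a second application of \cite{Buss-Echterhoff:Exotic_GFPA}*{Theorem~4.6} to the weakly proper $G/N\rtimes G/N$-algebra $A_u^N$, realizes elements of $(A_u^N)_u^{G/N}$ as their tautological images in $\M(A_u^N)$. Composing, $l_B(b)$ for $b\in B_c$ is the class of $j_B(b) \in \D_G\subseteq A_c^G\subseteq A_c^N\subseteq \M(A)^N$ viewed as a multiplier of $A_u^N$; its action on $\D_N\subseteq A_c^N\subseteq A_u^N$ reduces to left and right multiplication by $j_B(b)$ in $\M(A)$, since multiplication in $A_u^N$ among elements coming from $A_c^N\subseteq \M(A)$ is just multiplication inside $\M(A)$.

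For $l_{G/N}$ the same chain carries $i_{G/N}(f)$ to the structure map $j_G^N(f) \in \M(A_u^N)$ of the weakly proper $G/N\rtimes G/N$-algebra $A_u^N$. By the defining formulas for $j_G^N$ recorded at~\eqref{eq-phiN}--\eqref{eq-leftaction}, $j_G^N(f)$ acts on $A_c^N$, and in particular on $\D_N$, by left and right multiplication with $j_G(f)\in \M(A)$, which is exactly what the corollary asserts. The main obstacle is not any single calculation but the careful bookkeeping required to confirm that each of the three intermediate isomorphisms sends the indicated generators to the indicated targets; once this is verified, covariance, the identity $l_B\rtimes l_{G/N} = \Phi$, uniqueness, and the explicit descriptions of $l_B$ and $l_{G/N}$ all drop out simultaneously.
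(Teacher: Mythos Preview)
Your proposal is correct and follows essentially the same route as the paper: define $(l_B,l_{G/N})$ by transporting the canonical covariant pair through the isomorphism of Theorem~\ref{thm:GPFA(BxG)N}, then trace the chain of isomorphisms in that theorem's proof to see that on $B_c$ and $\contc(G/N)$ the maps reduce to multiplication by $j_B(b)$ and $j_G(f)$ inside $\M(B\rtimes_\delta\dualG)$. The paper's own argument is simply a terser version of exactly this bookkeeping.
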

\begin{proof} In the notation of Theorem~\ref{thm:GPFA(BxG)N}, we have $B=B_u$ and $A=B\rtimes_\delta\dualG$.
If we follow the arguments given in the proof of that theorem
we see that the isomorphism $B\rtimes_{\delta|}\dual{G/N}\cong (B\rtimes_\delta\dualG)_u^N$
is given on the level of $(B\rtimes_\delta\dualG)_c^N\subseteq \M(B\rtimes_\delta\dualG)$
by sending $B_c\subseteq (B\rtimes_\delta\dualG)_c^G$ to $j_B(B_c) \subseteq\M(B\rtimes_\delta\dualG)$ and $C_c(G/N)$
to $j_G|(C_c(G/N))\subseteq \M(B\rtimes_\delta\dualG)$. But this means that
it coincides on these dense subspaces of $B$ and $C_0(G/N)$ with $l_B$ and $l_{G/N}$,
which implies the result.
\end{proof}

Of course, the results presented in this section also provide versions of Mansfield's imprimitivity theorems
for "crossed products by homogeneous spaces" as considered  in
\cite{Echterhoff-Kaliszewski-Raeburn:Crossed_products_dual, anHuef-Raeburn:Mansfield}:
if $\delta$ is a  coaction of $G$ and $H$ is a closed
subgroup of $G$, then the {\em reduced crossed product} $B\rtimes_{\delta,r}\dual{G/H}$ of
$B$ by the homogeneous space $G/H$ is defined in \cite{Echterhoff-Kaliszewski-Raeburn:Crossed_products_dual}
as
$$B\rtimes_{\delta,r}\dual{G/H}=\overline{j_B(B)j_G(C_0(G/H))}\subseteq \M(B\rtimes_{\delta}\dualG).$$
Since the reduced fixed-point algebra is the closure of $(B\rtimes_\delta\dualG)_c^H$ inside
$\M(B\rtimes_{\delta}\dualG)$, which coincides with the closure of $\D_H$ by
Lemma \ref{lem-compact supports}, we see that
$$B\rtimes_{\delta,r}\dual{G/H}\cong (B\rtimes_\delta\dualG)_r^H$$
and our results provide us with an imprimitivity bimodule $\F_r^H(B\rtimes_\delta\dualG)$
between $B\rtimes_{\delta,r}\dual{G/H}$ and $(B\rtimes_\delta\dualG)\rtimes_rH$.
This coincides with the one obtained in \cite{ anHuef-Raeburn:Mansfield}.
On the other extreme, it makes perfect sense to {\em define} the {\em universal crossed product $B\rtimes_{\delta, u}\widehat{G/H}$}
of $B$ by the homogeneous space $G/H$ as the universal fixed-point algebra
\begin{equation}\label{eq:DefRestrictedCPG/H}
B\rtimes_{\delta, u}\widehat{G/H}:=(B\rtimes_\delta\dualG)_u^H,
\end{equation}
which then provides us with the Morita equivalence $\F_u^H(B\rtimes_{\delta}\dualG)$ between
$B\rtimes_{\delta, u}\widehat{G/H}$ and $(B\rtimes_\delta\dualG)\rtimes_uH$.
Note that for normal $N$ and arbitrary coactions $\delta$ we get isomorphisms
$$B\rtimes_{\delta,r}\dual{G/N}\cong B_r\rtimes_{\delta_r|}\widehat{G/N}\quad\text{and}\quad
B\rtimes_{\delta, u}\widehat{G/N}\cong B_u\rtimes_{\delta_u|}\widehat{G/N}$$
but it is important here to use the normalization $(B_r,\delta_r)$ in the first isomorphism and the maximalization
$(B_u,\delta_u)$ in the second, since in general we do not have isomorphisms between
$B_u\rtimes_{\delta_u|}\widehat{G/N}$ and $B_r\rtimes_{\delta_r|}\widehat{G/N}$ (\eg, take
$N=G$, in which we obtain the algebras $B_u$ and $B_r$, which are often different if $G$ is not
amenable).

As far as we know, there was  no general definition of the universal crossed product by homogeneous spaces as in~\eqref{eq:DefRestrictedCPG/H} before.
However, such crossed products have been defined in the special case of dual coactions,
\ie, in the case $B=A\rtimes_\alpha G$
with  dual coaction $\delta=\dual\alpha$ for some $G$-algebra $(A,\alpha)$.
In this situation the crossed product $B\rtimes_{\delta,u}\dual{G/H}$ has been defined
in \cite{Echterhoff-Kaliszewski-Raeburn:Crossed_products_dual}
as the crossed product
$C_0(G/H,A)\rtimes_{\tau\otimes\alpha}G$, where here $\tau$ denotes left translation of $G$ on $G/H$
(see the discussion before Lemma~2.4 in \cite{Echterhoff-Kaliszewski-Raeburn:Crossed_products_dual}).
Let us now check  that both definitions agree in this case.
By the Imai-Takai Duality Theorem (\cite{Echterhoff-Kaliszewski-Quigg-Raeburn:Categorical}*{Theorem~A.67}), we have a canonical isomorphism
of weak $G\rtimes G$\nb-algebras:
$$B\rtimes_\delta\dualG\cong A\otimes \K(L^2G)$$
where $A\otimes\K(L^2G)$ is endowed with the $G$-action $\alpha\otimes\Ad_\rho$ and the structure map $1\otimes M\colon \contz(G)\to \M(A\otimes\K(L^2G))$.
But then Proposition~5.7 in \cite{Buss-Echterhoff:Imprimitivity} shows that
$$(B\rtimes_\delta\dualG)^H_\un\cong (A\rtimes\K(L^2G))^H_\un\cong \Ind_H^G(A)\rtimes_{\Ind_{\alpha}}G\cong \contz(G/H,A)\rtimes_{\tau\otimes\alpha} G.$$

\section{Twisted Landstad Duality}
\label{sec:Twisted-Landstad}

Let $G$ be a locally compact group and $N$ a closed normal subgroup of $G$. In this section we are going to study weak $G\rtimes N$-algebras, that is,
\cstar{}algebras $A$ endowed with an $N$-action $\alpha$ of $N$ and an $N$-equivariant nondegenerate \Star{}homomorphism $\phi\colon \contz(G)\to \M(A)$, where
$\contz(G)$ is endowed with right translation action of $N$. Since this action is free and proper, the corresponding Hilbert $A\rtimes_{\alpha,\pn}N$-module $\F_\pn(A)$ implements a Morita equivalence between $A^N_\pn\cong \K(\F_\pn(A))$ and $A\rtimes_{\alpha,\pn}N$ for every crossed-product norm $\|\cdot \|_\pn$ on $\contc(N,A)$. Assume now that $\|\cdot\|_\pn$ is a norm for which the dual $N$-coaction $\dualalpha$ on $A\rtimes_{\alpha,\un}N$ factors through a coaction $\dualalpha_\pn$ on $A\rtimes_{\alpha,\pn}N$.
By Lemma~4.12 in \cite{Buss-Echterhoff:Exotic_GFPA}, we know that $\F^N_\pn(A)$ carries a $G$-coaction $\delta_\F$ given by:
\begin{equation}\label{eq:FormulaCoationOnF(A)}
\delta_\F(\xi)=(\phi\otimes\Id)(w_G)(\xi\otimes 1)\quad\mbox{for all }\xi\in \F_c(A)=\phi(\contc(G))A
\end{equation}
which implements a Morita equivalence between the inflation $\Inf\dualalpha_\pn$ of the dual coaction $\dualalpha_\pn$ on $A\rtimes_{\alpha,\pn} N$, and the $G$-coaction $\delta^N_\pn$ on $A^N_\pn$ induced by $\delta_\F$ which is given by:
\begin{equation}\label{eq-deltap}
\delta^N_\pn(m)=(\phi\otimes\id)(\omega_G)(m\otimes 1)(\phi\otimes\id)(\omega_G)^*\quad\mbox{for all }m\in A^N_c.
\end{equation}
(see \cite[Remark~4.14 ]{Buss-Echterhoff:Exotic_GFPA}).
Our first goal is to show that $\delta_\pn$ is a twisted coaction in the following sense:

\begin{definition}\label{def:TwistedCoaction}
A twisted coaction of $(G,G/N)$ on a \cstar{}algebra $B$ is a pair $(\delta,\twu)$ consisting of a (nondegenerate) $G$-coaction $\delta\colon B\to \M(B\otimes \Cst(G))$ of $G$ on $B$
and a \emph{twisting unitary} over $G/N$, meaning a unitary multiplier $\twu\in \U\M(B\otimes \Cst(G/N))$ satisfying:
\begin{enumerate}[(i)]
\item $(\twu\otimes 1)(\Id\otimes\sigma_{G/N,G/N})(\twu\otimes 1)=(\Id\otimes \delta_{G/N})(\twu)$;
\item $(\delta\otimes\Id_{G/N})(\twu)=(\Id_A\otimes\sigma_{G/N,G})(\twu\otimes 1)$; and
\item $\delta|(b)=\twu(b\otimes 1)\twu^*$ for all $b\in B$,
\end{enumerate}
where $\sigma_{G/N,G/N}$ and $\sigma_{G/N,G}$ denote the flip isomorphisms on $\Cst(G/N)\otimes \Cst(G/N)$ and $\Cst(G/N)\otimes \Cst(G)$, respectively,
and $\delta|\defeq (\Id\otimes q_N)\circ\delta$ denotes the restriction of $\delta$ to $G/N$, where $q_N\colon \Cst(G)\to \Cst(G/N)$ is the quotient map.

Equivalently (see \cite{Quigg-Raeburn:Induced}), a twisted coaction can be defined
 as a pair $(\delta,\twh)$ consisting of a coaction $\delta:B\to \M(B\otimes C^*(G))$
 and a nondegenerate \Star{}homomorphism $\twh\colon \contz(G/N)\to \M(B)$ satisfying:
\begin{enumerate}[(1)]
\item $(\iota,\twh)$ is a covariant representation of $(B,\delta|)$ into $\M(B)$, where
$\iota\colon B\to \M(B)$ denotes the inclusion map, that is,
$$\delta|(b)=(\twh\otimes\id)(\omega_{G/N})(b\otimes 1)(\twh\otimes \id)(\omega_{G/N}^*)\quad\mbox{for all }b\in B;\mbox{ and }$$
\item $\delta(\twh(f))=\twh(f)\otimes 1$ for all $f\in C_0(G/N)$.
\end{enumerate}
In this case, $\twh$ is called the \emph{twisting homomorphism} for $(B,\delta)$.
\end{definition}

If the twisting homomorphism $\twh$ is given, the unitary twist $\twu$ can be recovered from $\twh$ by $\twu=(\twh\otimes\id)(\omega_{G/N})$. Conversely, every unitary twist is of this form by \cite[Lemma~A.1]{Quigg-Raeburn:Induced}, and in this case we say that $\twh$ is the twisting homomorphism associated to $\twu$ or that $\twu$ is the
twisting unitary associated to $\twh$.
We refer to  \cite{Quigg-Raeburn:Induced,Echterhoff-Raeburn:The_stabilisation_trick,Phillips.Raeburn:Twisted} for further information on twisted coactions.

To simplify the writing, we shall use standard leg numbering notations like $\twu_{12}\defeq \twu\otimes 1$, $\twu_{23}\defeq 1\otimes \twu$ and $\twu_{13}=(\Id\otimes \sigma)(\twu)$, where $\sigma$ is some suitable flip automorphism (like $\sigma_{G/N,G/N}$ or $\sigma_{G/N,G}$ as above). With these notations, the two first conditions in the above definition can be  reformulated as:
$$\text{(i) $\twu_{12}\twu_{13}=(\Id\otimes \delta_{G/N})(\twu)$}\quad\quad \text{and}\quad\quad \text{(ii) $(\delta\otimes\Id_{G/N})(\twu)=\twu_{13}$}.$$
The first condition can be interpreted by saying that $\twu\in\U\M(B\otimes \Cst(G/N))$ is a corepresentation of $G/N$ on $B$. Observe that, in this case, if $\psi\colon B\to \M(C)$ is a \Star{}homomorphism, then $(\psi\otimes\id)(\twu)$ is a corepresentation of $G/N$ on $C$.

\begin{definition}
Let $(B,\delta,\twu)$ be a twisted coaction of $(G,G/N)$. We say that a covariant representation $(\pi,\sigma)$ of $(B,\delta)$ \emph{preserves the twist} if
\begin{equation}\label{eq-twist}
(\sigma\otimes\Id_{G/N})(\omega_{G/N})=(\pi\otimes\Id_{G/N})(\twu).
\end{equation}
 In this case we also say that $(\pi,\sigma)$ is a covariant representation of $(B,\delta, \twu)$.

A \emph{twisted crossed product} for $(B,\delta,\twu)$ is a \cstar{}algebra $C$ endowed with a covariant representation $(k_B,k_G)$
of $(B,G,\twu)$ into $\M(C)$
 such that $k_B(B)k_G(\contz(G))$ is linearly dense in $C$ and such that for every other twisted covariant representation $(\pi,\sigma)$ into
 $\M(D)$ there
 exists a unique nondegenerate representation $\pi\rtimes_\twu\sigma: C\to \M(D)$ such that $\pi=(\pi\rtimes_\twu\sigma)\circ k_B$ and
 $\sigma=(\pi\rtimes_\twu\sigma)\circ k_G$.
\end{definition}

\begin{remark}\label{rem:PreservationOfTwist}
If $\twh\colon \contz(G/N)\to \M(B)$ is the twisting homomorphism associated to $\twu$, then a covariant representation $(\pi,\sigma)$ preserves the twist if and only if
$\sigma|_{\contz(G/N)}=\pi\circ \twh$ (see \cite{Phillips.Raeburn:Twisted}*{Remark~2.6}).
\end{remark}

Every twisted coaction admits a twisted crossed product which is uniquely determined up to isomorphism and denoted by $B\rtimes_{\delta,\twu}\dualG$.
If $B\rtimes_\delta \dualG$ denotes the (untwisted) crossed product for $(B,\delta)$ and $(j_B,j_G)$ is its universal covariant representation, then $B\rtimes_{\delta,\twu}\dualG$ can be realized as the quotient $B\rtimes_{\delta,\twu}\dualG=(B\rtimes_\delta\dualG)/{I_\twu}$, where
$$I_\twu=\cap\{\ker(\pi\rtimes \sigma):(\pi,\sigma) \mbox{ is covariant representation preserving the twist}\}$$
is the \emph{twisting ideal}. In this picture, the universal covariant representation $(k_B,k_G)$ is just the composition $(q_\twu\circ j_B,q_\twu\circ j_G)$, where $q_\twu\colon B\rtimes_\delta \dualG\to B\rtimes_{\delta}\dualG/I_\twu$ is the quotient map and $\pi\rtimes_\twu\sigma$ is the unique factorization of $\pi\rtimes\sigma$ through $(B\rtimes_\delta\dualG)/{I_\twu}$.

\begin{example}\label{ex:twisted_coactions}
(1) The inflation $\Inf\delta$ of an $N$-coaction $\delta\colon B\to \M(B\otimes \Cst(N))$ is a \emph{trivially twisted coaction} over $G/N$, that is, a twisted coaction of $(G,G/N)$ with respect to the trivial unitary twist $\twu=1$ (this corresponds to the \emph{trivial twisting homomorphism} $\twh\colon\contz(G/N)\to\M(B)$ defined by $\twh(f)=f(eN)1_B$ for all $f\in\contz(G/N)$). The twisted crossed product $B\rtimes_{\Inf\delta,1}\dualG$ is canonically isomorphic to the original (untwisted) crossed product $B\rtimes_\delta\dual{N}$ (see \cite{Phillips.Raeburn:Twisted, Echterhoff-Raeburn:The_stabilisation_trick}).

(2) Given an arbitrary coaction $\delta\colon B\to \M(B\otimes \Cst(G))$ of $G$ on $B$, the restricted crossed product $B\rtimes_{\delta|}\dualGN$ carries a canonical twisted $(G,G/N)$-coaction $(\tilde\delta,\tilde\twu)$: the coaction $\tilde\delta$ is the integrated form $\pi\rtimes\sigma$ of the covariant representation
$$(\pi,\sigma)=((j_B\otimes\Id)\circ\delta,j_{G/N}\otimes 1),$$
where $(j_B,j_{G/N})$ denotes the universal covariant representation of $(B,\delta|)$
and the twisting homomorphism for $\tilde\delta$ is $j_{G/N}$, hence $\tilde\twu=(j_{G/N}\otimes \id)(\twu_{G/N})$.
Moreover, there is a canonical isomorphism (see \cite{Phillips.Raeburn:Twisted} and also Remark~7.12 in \cite{Quigg-Raeburn:Induced}):
$$(B\rtimes_{\delta|}\dualGN)\rtimes_{\tilde\delta,\tilde\twu}\dualG\cong B\rtimes_{\delta}\dualG.$$
In  Corollary~\ref{cor:PR-DecompositionTheo} below we derive this decomposition isomorphism also as a consequence of our results.
\end{example}

If $\twu$ a twisting unitary over $G/N$ for $(B,\delta)$, then the twisting ideal $I_\twu$ is $N$-invariant with respect to the dual action $\widehat{\delta}$, so that
$\widehat{\delta}$ induces an $N$-action on the twisted crossed product $B\rtimes_{\delta,\twu}\dualG$, which we  denote by $\dual\delta^\twu$. If $(k_B,k_G)$ denotes the
universal twisted covariant representation, the homomorphism $k_G\colon\contz(G)\to \M(B\rtimes_{\delta,\twu}\dualG)$ is $N$-equivariant with respect
to the right translation action of $N$ on $G$, and hence $B\rtimes_{\delta,\twu}\dualG$
carries a canonical structure as a weakly proper $G\rtimes N$-algebra.

The following result is well-known. In \cite[Theorem~4.4]{Quigg-Raeburn:Induced}  it is shown for normal \emph{amenable} subgroups and \emph{reduced coactions}, that is, injective nondegenerate coactions of $\Cst_\red(G)$. But it is pointed out in the proof of \cite[Theorem~4.3]{Kaliszewski-Quigg:Imprimitivity} that the  proof of \cite[Theorem~4.4]{Quigg-Raeburn:Induced} extends to arbitrary (\ie, also non-amenable) normal subgroups if one replaces reduced coactions by  \emph{full} coactions of $C^*(G)$ as we are using here.

\begin{proposition}[Quigg-Raeburn]
\label{QR-Cocrossedproduct=IndAlg}
For a twisted $(G,G/N)$-coaction $(B,\delta,\twu)$, there is a $G$-equivariant isomorphism
$\chi\colon B\rtimes_\delta\dualG\congto \Ind_N^G(B\rtimes_{\delta,\twu}\dualG)$ sending $x\in B\rtimes_\delta\dualG$ to the function
$t\mapsto (k_B\rtimes k_G)(\dual\delta_{t^{-1}}(x))\in \Ind_N^G(B\rtimes_{\delta,\twu}\dualG)$.\end{proposition}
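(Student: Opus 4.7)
The plan is to verify that $\chi$ is a well-defined $G$-equi\-va\-riant $*$-homomorphism and then to establish bijectivity.

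First, I would check that $\chi(x)\in \Ind_N^G(B\rtimes_{\delta,\twu}\dualG)$ for every $x\in B\rtimes_\delta\dualG$. The $N$-covariance $\chi(x)(tn)=\dual\delta^\twu_{n^{-1}}(\chi(x)(t))$ is immediate from the fact that $k_B\rtimes k_G$ intertwines the restriction $\dual\delta|_N$ with the quotient $N$-action $\dual\delta^\twu$. For the $\contz(G/N)$-decay of $\|\chi(x)(\cdot)\|$ it suffices to treat the spanning elements $x=j_B(b)j_G(f)$ with $b\in B$ and $f\in \contc(G)$, so that $\chi(x)(t)=k_B(b)k_G(\tau_{t^{-1}}(f))$. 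Picking $\psi\in \contc(G/N)$ (viewed as an $N$-invariant element of $\contb(G)$, using normality of $N$) with $\psi\cdot f=f$ and applying the twisting relation $k_G|_{\contz(G/N)}=k_B\circ\twh$ from Remark~\ref{rem:PreservationOfTwist}, one rewrites
\[
\chi(x)(t)=k_B\!\bigl(b\cdot\twh(\tau_{t^{-1}}(\psi))\bigr)\,k_G(\tau_{t^{-1}}(f)).
\]
Nondegeneracy of $\twh$ together with the observation that the translated cutoffs $\tau_{t^{-1}}(\psi)$ leave any fixed compact subset of $G/N$ as $tN\to\infty$ then forces $\|b\cdot \twh(\tau_{t^{-1}}(\psi))\|\to 0$, which gives the required decay.

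Next, I would recognize $\chi$ as the integrated form of the $(B,\delta)$-covariant pair $(\bar\pi,\bar\sigma)$ valued in $\M(\Ind_N^G(B\rtimes_{\delta,\twu}\dualG))$ with $\bar\pi(b)=[t\mapsto k_B(b)]$ (constant) and $\bar\sigma(f)=[t\mapsto k_G(\tau_{t^{-1}}(f))]$; the coaction-covariance of $(\bar\pi,\bar\sigma)$ is inherited pointwise in $t$ from that of $(k_B,k_G)$, so $\chi$ is a $*$-homomorphism by the universal property of $B\rtimes_\delta\dualG$. Equivariance is the direct computation
\[
\chi(\dual\delta_s(x))(t)=(k_B\rtimes k_G)(\dual\delta_{t^{-1}s}(x))=\chi(x)(s^{-1}t)=(\Ind\dual\delta^\twu)_s(\chi(x))(t).
\]

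Finally, I would establish bijectivity of $\chi$. For surjectivity it suffices to show that $\chi$ has dense image: a dense subspace of $\Ind_N^G(B\rtimes_{\delta,\twu}\dualG)$ is spanned by sections of the form $t\mapsto \int_N \varphi(tn)\dual\delta^\twu_n(a)\,\dd n$ for $\varphi\in \contc(G)$ and $a\in B\rtimes_{\delta,\twu}\dualG$, and taking $a=k_B(b)k_G(g)$ one exhibits such sections as limits of $\chi(j_B(b)j_G(f))$ for suitably chosen $f$. The main obstacle is injectivity, since by construction $\ker\chi=\bigcap_{t\in G}\dual\delta_t(I_\twu)$ is the largest $\dual\delta$-invariant ideal contained in the twisting ideal $I_\twu$. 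To show this intersection is trivial, I would combine the Landstad-type correspondence between $\dual\delta$-invariant ideals of $B\rtimes_\delta\dualG$ and $\delta$-invariant ideals $I\subseteq B$ (via $I\mapsto I\rtimes_\delta\dualG$, at least after passing to the maximalization $(B_u,\delta_u)$ where $j_B$ is faithful) with the observation that for any nonzero such $I$, the restricted twisted coaction $(B/I, \delta|_{B/I},\twu|_{B/I})$ is a nonzero twisted coaction and hence admits a twisted covariant representation that does not annihilate $I$, contradicting $I\rtimes_\delta\dualG\subseteq I_\twu$.
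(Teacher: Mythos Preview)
The paper does not supply its own proof of this proposition; it is quoted from \cite{Quigg-Raeburn:Induced}*{Theorem~4.4} (together with \cite{Kaliszewski-Quigg:Imprimitivity}*{Theorem~4.3} for the non-amenable case). So the comparison is with the Quigg--Raeburn argument rather than with anything in the present paper.

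Your treatment of well-definedness, the identification of $\chi$ as a \Star{}homomorphism, and $G$-equivariance is fine. The surjectivity sketch is thin but salvageable. The injectivity argument, however, has a genuine gap. First, a slip: it is the \emph{normalization} $(B_\red,\delta_\red)$, not the maximalization, for which $j_B$ is faithful. More seriously, your contradiction is stated backwards: a covariant representation of $(B/I,\delta|_{B/I},\twu|_{B/I})$ \emph{does} annihilate $I$, so it cannot supply a twisted covariant representation with $\pi(I)\neq 0$. What you actually need is the existence of a twisted covariant representation $(\pi,\sigma)$ of $(B,\delta,\twu)$ with $\pi$ faithful (for normal $\delta$); equivalently, that $\ker k_B=\ker j_B$. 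But in the paper this equality is obtained \emph{as a corollary of} the very proposition you are trying to prove, so invoking it here is circular unless you give an independent argument --- and none is indicated.

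The Quigg--Raeburn route avoids this by first constructing, directly from the twisting homomorphism $\twh$, a \emph{central} map $\psi\colon \contz(G/N)\to \Zentrum\M(B\rtimes_\delta\dualG)$ (the ``convolution of $j_G|_{\contz(G/N)}$ and $j_B\circ\twh$'' mentioned in the remark following the proposition). This makes $B\rtimes_\delta\dualG$ a $\contz(G/N)$-algebra, one checks that $\chi$ is $\contz(G/N)$-linear, and identifies the fibre over $tN$ with $B\rtimes_{\delta,\twu}\dualG$ via $\dual\delta_t$; injectivity of $\chi$ then follows from $\bigcap_t\dual\delta_t(I_\twu)=\bigcap_{tN}I_{tN}=0$, which is automatic for $\contz(G/N)$-algebras. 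If you want to keep your invariant-ideal strategy, you should replace the flawed $(B/I)$-step by an explicit construction of a twisted covariant pair with faithful $\pi$ (for normal $\delta$) that does not already presuppose the proposition.
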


\begin{remark}\label{rem-chi-iso} It is clear that the extension of $\chi$ to the multiplier algebra $\M(B\rtimes_\delta\dualG)$  sends
$j_B(b)$ to the constant function $\tilde{k}_B(b)=(t\mapsto k_B(b))\in \M(\Ind_N^G(B\rtimes_{\delta,\twu}\dualG))$ and $j_G(f)$ to the element
$\tilde{k}_G(f)\in \M( \Ind_N^G(B\rtimes_{\delta,\twu}\dualG))$ determined
by the function $t\mapsto k_G(\tau_{t^{-1}}(f))\in \M(B\rtimes_{\delta,\twu}\dualG)$ with $\tau_t(f)|_s=f(st)$.
Hence,  $\chi$ becomes an isomorphism between the
weak $G\rtimes G$-algebras $(B\rtimes_\delta\dualG, j_G, \widehat\delta)$ and $(\Ind_N^G(B\rtimes_{\delta,\twu}\dualG), \tilde{k}_G, \Ind\dual\delta^\twu)$.

 Observe that the above proposition implies, in particular, that there is a central homomorphism $\psi\colon\contz(G/N)\to \Zt\M(B\rtimes_\delta\dualG)$ corresponding to the canonical homomorphism $f\mapsto f\otimes 1$ from $\contz(G/N)$ into $\Zt\M\big(\Ind_N^G(B\rtimes_{\delta,\twu}\dualG)\big)$. A formula for $\psi$ is given in \cite{Quigg-Raeburn:Induced}*{Theorem~4.4}: it is a certain convolution of $j_G|_{\contz(G/N)}$ and $j_B\circ \twh$, where $\twh$ is the twisting homomorphism $\contz(G/N)\to \M(B)$ associated to $\twu$.
\end{remark}

An easy consequence of the above proposition is the following:

\begin{corollary}
For a twisted coaction $(B,\delta,\twu)$, the (untwisted) coaction $(B,\delta)$ is normal if and only if $k_B$ is injective.
Moreover, we have $\ker(j_B)=\ker(k_B)$.
\end{corollary}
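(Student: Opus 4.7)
The plan is to deduce both assertions directly from the Quigg--Raeburn isomorphism of Proposition~\ref{QR-Cocrossedproduct=IndAlg}, using the explicit formula for $\chi$ on $j_B(B)$ recorded in Remark~\ref{rem-chi-iso}. Since normality of $(B,\delta)$ is by definition the injectivity of $j_B$, the first assertion is an immediate consequence of the equality $\ker(j_B)=\ker(k_B)$, so I would concentrate on proving this equality.

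First, I would extend the isomorphism $\chi\colon B\rtimes_\delta\dualG\congto \Ind_N^G(B\rtimes_{\delta,\twu}\dualG)$ in a standard way to an isomorphism $\bar\chi$ of the corresponding multiplier algebras (this is legitimate because $\chi$ is a nondegenerate \Star{}isomorphism of \cstar{}algebras). By the explicit formula in Remark~\ref{rem-chi-iso} we have
\[
\bar\chi(j_B(b))=\tilde{k}_B(b)\in\M\big(\Ind_N^G(B\rtimes_{\delta,\twu}\dualG)\big),
\]
where $\tilde{k}_B(b)$ is the constant multiplier corresponding to the function $t\mapsto k_B(b)$. One should briefly verify that this function really defines a multiplier of $\Ind_N^G(B\rtimes_{\delta,\twu}\dualG)$: since $j_B(b)$ is fixed by the dual action $\widehat\delta$ on the untwisted crossed product, the element $k_B(b)$ is fixed by the induced action $\widehat\delta^\twu$ on the twisted quotient, so the constant function automatically satisfies the equivariance condition $F(sn)=\widehat\delta^\twu_{n^{-1}}(F(s))$ required of elements of $\M(\Ind_N^G(\cdot))$.

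Next, I would observe that the constant multiplier $\tilde{k}_B(b)$ is zero if and only if $k_B(b)=0$. This is clear from the description of $\M(\contz(G)\otimes D)$ as bounded strictly continuous functions $G\to\M(D)$: the constant function with value $c$ vanishes iff $c=0$ (equivalently, one can pair $\tilde{k}_B(b)$ with an element of $\Ind_N^G$ of the form $\psi\cdot F$ with $\psi\in\contc(G)$ not identically zero and notice that the result is $\psi\cdot k_B(b)F$). Combining this with the injectivity of $\bar\chi$ gives $j_B(b)=0\iff k_B(b)=0$, i.e.\ $\ker(j_B)=\ker(k_B)$.

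Finally, the normality statement follows at once: $(B,\delta)$ is normal by definition when $j_B$ is injective, which by the previous step is equivalent to $k_B$ being injective. There is no serious obstacle here; the only detail worth checking is the $N$-fixedness of $k_B(b)$ in the twisted crossed product, and that is automatic from the corresponding property of $j_B(b)$ in the untwisted one, since $k_B=q_\twu\circ j_B$ is equivariant for $\widehat\delta$ and $\widehat\delta^\twu$.
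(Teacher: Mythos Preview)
Your proof is correct and follows essentially the same approach as the paper: both arguments use the Quigg--Raeburn isomorphism $\chi$ of Proposition~\ref{QR-Cocrossedproduct=IndAlg} to identify $\chi(j_B(b))$ with the constant function $t\mapsto k_B(b)$ and then invoke injectivity of $\chi$. The paper computes this directly from the formula for $\chi$ and the $\dual\delta$-invariance of $j_B(b)$, whereas you cite Remark~\ref{rem-chi-iso} and pass explicitly to multipliers, but the content is the same.
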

\begin{proof}
Recall that $\delta$ is normal if and only if $j_B\colon B\to \M(B\rtimes_\delta\dualG)$ is injective. Thus it is enough to prove the final assertion.
Let $q\defeq k_B\rtimes k_G\colon B\rtimes_\delta\dualG\to B\rtimes_{\delta,\twu}\dualG$ denote the canonical surjection and let $\chi\colon B\rtimes_\delta\dualG\congto \Ind_N^G(B\rtimes_{\delta,\twu}\dualG)$ be the isomorphism of Proposition~\ref{QR-Cocrossedproduct=IndAlg}. Then, for all $b\in B$, since $j_B(b)$ is $\dual\delta$-invariant, we have
$$\chi(j_B(b))|_t=q(\dual\delta_{t^{-1}}(j_B(b)))=q(j_B(b))=k_B(b).$$
This implies $\ker(j_B)=\ker(k_B)$ because $\chi$ is injective.
\end{proof}

The following result appears as Corollary~4.10 in \cite{Quigg-Raeburn:Induced} where, again, amenability of $N$ is required due to the use of reduced coactions. However, using Proposition~\ref{QR-Cocrossedproduct=IndAlg} above, the same proof as given  in \cite{Quigg-Raeburn:Induced} applies to full normal coactions and non-amenable $N$.

\begin{lemma}[Quigg-Raeburn]
\label{lem:injectivityCovRep}
Let $(\pi,\sigma)$ be a covariant representation of a twisted $(G,G/N)$-coaction $(B,\delta,\twu)$ into $\M(D)$. Assume that $(B,\delta)$ is normal.
Then $\pi\rtimes_\twu\sigma$ is faithful if and only if $\pi$ is faithful and there is an action of $N$ on the image of $\pi\rtimes_\twu\sigma$ making $\sigma$ into an $N$-equivariant homomorphism.
(The last condition is equivalent to saying that $\ker(\pi\rtimes_\twu\sigma)$ is an $N$-invariant ideal in $B\rtimes_{\delta,\twu}\dualG$.)
\end{lemma}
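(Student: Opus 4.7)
The forward direction is immediate: if $\pi\rtimes_\twu\sigma$ is faithful, then $\pi=(\pi\rtimes_\twu\sigma)\circ k_B$ is injective because $k_B$ is injective by the preceding corollary (normality of $\delta$). Moreover, the dual $N$-action $\dual\delta^\twu$ on $B\rtimes_{\delta,\twu}\dualG$ transports through $\pi\rtimes_\twu\sigma$ to an $N$-action on $\im(\pi\rtimes_\twu\sigma)$ with respect to which $\sigma=(\pi\rtimes_\twu\sigma)\circ k_G$ is $N$-equivariant, since $k_G$ is so.

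For the converse, set $I\defeq\ker(\pi\rtimes_\twu\sigma)$; by hypothesis $I$ is an $N$-invariant ideal of $B\rtimes_{\delta,\twu}\dualG$, and the goal is to show $I=0$. The plan is to transfer the question to the untwisted crossed product via the $G$-equivariant isomorphism $\chi\colon B\rtimes_\delta\dualG\congto \Ind_N^G(B\rtimes_{\delta,\twu}\dualG)$ of Proposition~\ref{QR-Cocrossedproduct=IndAlg}. The $N$-invariance of $I$ ensures that
$$\Ind_N^G I \defeq \{F\in \Ind_N^G(B\rtimes_{\delta,\twu}\dualG): F(s)\in I \text{ for all } s\in G\}$$
is an ideal of $\Ind_N^G(B\rtimes_{\delta,\twu}\dualG)$ invariant under the induced action $\Ind\dual\delta^\twu$; hence $J\defeq\chi^{-1}(\Ind_N^G I)$ is a $\dual\delta$-invariant ideal of $B\rtimes_\delta\dualG$. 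Using the explicit formula in Remark~\ref{rem-chi-iso}, $\chi(j_B(b))$ is the constant function $t\mapsto k_B(b)$, so $j_B(b)\in J$ iff $k_B(b)\in I$ iff $\pi(b)=(\pi\rtimes_\twu\sigma)(k_B(b))=0$, iff $b=0$ by faithfulness of $\pi$. Consequently $j_B(B)\cap J=0$.

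The argument concludes by invoking the following classical fact for normal coactions: any $\dual\delta$-invariant ideal $J$ of $B\rtimes_\delta\dualG$ satisfying $j_B(B)\cap J=0$ must itself be zero. This is a form of Landstad duality and can be obtained, for instance, from the Imai-Takai/Katayama duality theorem, or equivalently as the Rieffel ideal correspondence coming from the Morita equivalence $B\sim_M (B\rtimes_\delta\dualG)\rtimes_{\dual\delta,r}G$ given by Mansfield's theorem (Theorem~\ref{thm:GPFA(BxG)N}) applied with $N=G$. With this in hand, $J=0$ forces $\Ind_N^G I=0$ and therefore $I=0$. The only real obstacle is this last appeal to Landstad duality for normal coactions; note that no amenability of $N$ is needed, since the entire reduction is routed through the induced algebra and the isomorphism $\chi$ rather than through any reduced crossed product by $N$.
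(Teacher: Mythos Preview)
Your argument is correct and follows exactly the route the paper indicates: the paper gives no independent proof but refers to Corollary~4.10 of \cite{Quigg-Raeburn:Induced}, noting that the Quigg--Raeburn argument carries over to full normal coactions once one replaces their induced-algebra isomorphism by Proposition~\ref{QR-Cocrossedproduct=IndAlg}---which is precisely your reduction via $\chi$ and $\Ind_N^G I$. Your final appeal to the untwisted ($N=G$) Landstad criterion is legitimate and not circular, though citing \cite{Quigg:Landstad_duality} directly would be cleaner than routing through Theorem~\ref{thm:GPFA(BxG)N} and the Rieffel ideal correspondence.
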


Suppose that $(B,\delta,\twu)$ is a twisted $(G,G/N)$-coaction and let $(B_\red,\delta_\red)$ be the normalization of $(B,\delta)$. Then
it is well-known that $B\rtimes_\delta\dualG\cong B_\red\rtimes_{\delta_\red}\dualG$. In particular,
covariant representations of $(B,\delta)$ correspond bijectively to covariant representations of $(B_\red,\delta_\red)$.
This correspondence can be described as follows: recall that $(B_\red,\delta_\red)$ can be realized as $B_\red=j_B(B)\cong B/\ker{j_B}$ and $\delta_\red$ is given
on $j_B(B)$ by conjugation with the unitary $(j_G\otimes\id)(\omega_G)$.
If $(\pi,\sigma)$ is a covariant representation of $(B,\delta)$, the equation $(\pi\rtimes\sigma)\circ j_B=\pi$ implies that $\ker(j_B)\sbe \ker(\pi)$, so that $\pi$
factors through a homomorphism $\pi_\red$ of $B_\red$ and the pair $(\pi_\red,\sigma)$ is a covariant representation of $(B_\red,\delta_\red)$.
The assignment $(\pi,\sigma)\mapsto (\pi_\red,\sigma)$ is then a bijective correspondence between covariant representations of $(B,\delta)$ and $(B_\red,\delta_\red)$.

Moreover, if $\varrho\colon B\onto B_\red$ denotes the quotient map, the unitary twist $\twu$ for $\delta$ induces a twist $\twu_\red:=(\varrho\otimes \id)(\twu)$ for $\delta_\red$,
and a covariant representation $(\pi,\sigma)$ of $(B,\delta)$ preserves the twist $\twu$ if and only if $(\pi_\red,\sigma)$ preserves the twist $\twu_\red$.
It follows that the canonical surjection $\varrho\colon B\to B_\red$ induces an isomorphism of weak $G\rtimes N$-algebras:
\begin{equation}\label{eq:IsomorphismTwistedCrossedProducts=Normalization}
\varrho\rtimes_\twu\dualG\colon B\rtimes_{\delta,\twu}\dualG\congto B_\red\rtimes_{\delta_\red,\twu_\red}\dualG.
\end{equation}
Using this observation, we obtain the following generalization of Lemma \ref{lem:injectivityCovRep}  to arbitrary twisted coactions:

\begin{lemma}\label{lem:gen-injectivityCovRep}
Let  $(\pi,\sigma)$ be a covariant representation of the $(G,G/N)$-twisted coaction $(B,\delta,\twu)$
such that there exists an action of $N$ on the image of $\pi\rtimes_\twu\sigma$ making $\sigma$ into an $N$-equivariant homomorphism.
Then $\pi\rtimes_\twu\sigma$ is faithful if and only if $\ker\pi= \ker k_B\; (=\ker\big(\varrho\colon B\onto B_\red))$.
\end{lemma}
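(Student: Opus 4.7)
The plan is to reduce the claim to the normal case already handled by Lemma~\ref{lem:injectivityCovRep} via the normalization $(B_\red,\delta_\red,\twu_\red)$ of $(B,\delta,\twu)$. Recall from the corollary following Proposition~\ref{QR-Cocrossedproduct=IndAlg} that $\ker\varrho=\ker j_B=\ker k_B$, where $\varrho\colon B\onto B_\red$ is the normalization quotient, and that~\eqref{eq:IsomorphismTwistedCrossedProducts=Normalization} gives an isomorphism $\varrho\rtimes_\twu\dualG\colon B\rtimes_{\delta,\twu}\dualG\congto B_\red\rtimes_{\delta_\red,\twu_\red}\dualG$ of weak $G\rtimes N$-algebras, under which the hypotheses on $(\pi,\sigma)$ transfer verbatim to the normalized system.

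The forward direction is purely formal: if $\pi\rtimes_\twu\sigma$ is faithful, then as a nondegenerate faithful \Star{}homomorphism its canonical extension to $\M(B\rtimes_{\delta,\twu}\dualG)$ is also faithful (if $m$ lies in the kernel of the extension, then $mc=0$ for every $c$ by injectivity on $B\rtimes_{\delta,\twu}\dualG$, whence $m=0$). Combining this with the identity $\pi=(\pi\rtimes_\twu\sigma)\circ k_B$ gives $\ker\pi=\ker k_B$. Conversely, assume $\ker\pi=\ker k_B=\ker\varrho$. Then $\pi$ factors through an injective homomorphism $\pi_\red\colon B_\red\to\M(D)$, and using $\delta_\red\circ\varrho=(\varrho\otimes\id)\circ\delta$ together with $\twu_\red=(\varrho\otimes\id)(\twu)$ a direct check shows that $(\pi_\red,\sigma)$ is a twist-preserving covariant representation of $(B_\red,\delta_\red,\twu_\red)$. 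Moreover, $\sigma$ retains its $N$-equivariance property when viewed inside the image of $\pi_\red\rtimes_{\twu_\red}\sigma$, because this image coincides with the image of $\pi\rtimes_\twu\sigma$ via~\eqref{eq:IsomorphismTwistedCrossedProducts=Normalization}. Since $(B_\red,\delta_\red)$ is normal and $\pi_\red$ is injective, Lemma~\ref{lem:injectivityCovRep} applies to yield that $\pi_\red\rtimes_{\twu_\red}\sigma$ is faithful.

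To finish the backward direction I would verify the factorization $\pi\rtimes_\twu\sigma=(\pi_\red\rtimes_{\twu_\red}\sigma)\circ(\varrho\rtimes_\twu\dualG)$, which follows from checking both sides on the generators $k_B(b)k_G(f)$ (each is sent to $\pi(b)\sigma(f)=\pi_\red(\varrho(b))\sigma(f)$) together with density of the span. Composing with the inverse of the isomorphism then transports faithfulness of $\pi_\red\rtimes_{\twu_\red}\sigma$ back to $\pi\rtimes_\twu\sigma$. I do not anticipate a serious obstacle here; the only care required is in the routine bookkeeping that covariance, twist preservation and the $N$-equivariance hypothesis all descend cleanly along $\varrho$, which is essentially the functoriality of the twisted crossed-product construction with respect to equivariant surjections.
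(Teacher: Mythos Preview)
Your proof is correct and follows exactly the approach the paper intends: the paper does not give an explicit proof of this lemma, but states it as a direct consequence of the preceding discussion on normalization and the isomorphism~\eqref{eq:IsomorphismTwistedCrossedProducts=Normalization}, which is precisely the reduction to Lemma~\ref{lem:injectivityCovRep} that you carry out. Your write-up fills in the details the paper leaves to the reader.
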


Recall that two twisted coactions $(B,\delta_B,\twu_B)$ and $(C,\delta_C,\twu_C)$ are \emph{Morita equivalent} if there is an imprimitivity $A$--$B$-bimodule
 $\E$ carrying a $G$-coaction $\delta_\E$ compatible with $\delta_A$ and $\delta_B$ and satisfying $\delta_\E|(x)\defeq (\Id\otimes q_N)\circ\delta_\E(x)=\twu_A(x\otimes 1)\twu_B^*$ for all
 $x\in \E$.
The following result is a twisted version of the Landstad Duality Theorem for coactions we proved in \cite{Buss-Echterhoff:Exotic_GFPA} (which is, in turn, a generalization of the main result in \cite{Quigg:Landstad_duality}).

\begin{theorem}\label{theo:LandstadTwistedTheorem1}
Let $(A,\alpha,\phi)$ be a weak $G\rtimes N$-algebra and let $\|\cdot\|_\pn$ be a crossed-product norm on $\contc(N,A)$ for which the dual $N$-coaction $\dualalpha$ on $A\rtimes_{\alpha,\un}N$ factors through a coaction $\dualalpha_\pn$ on $A\rtimes_{\alpha,\pn}N$. Then
\begin{enumerate}[(i)]
\item the $G$-coaction $\delta^N_\pn$ on $A^N_\pn$ given by Equation~\eqref{eq-deltap} is twisted over $G/N$ with twisting homomorphism $\phi^N\colon \contz(G/N)\to \M(A^N_\pn)$ induced from the structural homomorphism $\phi\colon \contz(G)\to \M(A)$ as in (\ref{eq-leftaction}), \ie,
    $\omega_\pn\defeq (\phi^N\otimes\id)(\omega_{G/N})$ is the twisting unitary for $\delta^N_\pn$.
\item The coaction $\delta_\F$ on $\F=\F_\pn^N(A)$ implements a Morita equivalence between $(A^N_\pn,\delta^N_\pn,\twu_\pn)$ and the trivially twisted coaction $(A\rtimes_{\alpha,\pn}N,\Inf\dualalpha_\pn,1)$.
Moreover, $(A^N_\un,\delta^N_\un)$ is a maximal $G$-coaction and $(A^N_\red,\delta^N_\red)$ is a normal $G$-coaction.
\item If $\kappa\colon A^N_\pn\to \M(A)$ is the canonical representation given by the extension of the inclusion map $A^N_c\into \M(A)$
\textup{(}see \cite{Buss-Echterhoff:Exotic_GFPA}*{Proposition~3.5}\textup{)},
then the pair $(\kappa,\phi)$ is a covariant representation of $(A^N_\pn,\delta^N_\pn,\twu_\pn)$ into $\M(A)$
and the corresponding integrated form $\kappa\rtimes_\twu\phi$ is an isomorphism $A^N_\pn\rtimes_{\delta^N_\pn,\twu_\pn}\dualG\congto A$
of weak $G\rtimes N$-algebras.
\end{enumerate}
\end{theorem}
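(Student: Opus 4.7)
The approach is to verify the three parts sequentially, building on the untwisted Landstad duality established in \cite{Buss-Echterhoff:Exotic_GFPA}*{Theorem 4.6} and \cite{Buss-Echterhoff:Exotic_GFPA}*{Lemma 4.12}. The unifying computational ingredient is the identity
\begin{equation*}
(\id \otimes q_N)\bigl((\phi \otimes \id)(\omega_G)\bigr) = (\kappa \circ \phi^N \otimes \id)(\omega_{G/N})
\end{equation*}
in $\M(A \otimes C^*(G/N))$, which records that $t \mapsto q_N(u_t) = u_{tN}$ factors through $G/N$ and is therefore captured by $\phi$ restricted to $\contz(G/N) \sbe \contb(G)$.

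For part (i), I would verify the two axioms of Definition~\ref{def:TwistedCoaction} with twisting homomorphism $\phi^N$. Axiom (2), $\delta^N_\pn(\phi^N(f)) = \phi^N(f) \otimes 1$ for $f \in \contz(G/N)$, reduces via $\kappa \otimes \id$ to showing that $\phi(f) \otimes 1$ commutes with $(\phi \otimes \id)(\omega_G)$; this is immediate since $\omega_G \in \M(\contz(G) \otimes \Cst(G))$ corresponds to $t \mapsto u_t$ and $f \otimes 1$ to $t \mapsto f(t)$, while scalars commute with $u_t$. Axiom (1), $\delta^N_\pn|(m) = \twu_\pn (m \otimes 1) \twu_\pn^*$, then follows by applying $\id \otimes q_N$ to the defining formula \eqref{eq-deltap} and invoking the key identity.

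For part (ii), \cite{Buss-Echterhoff:Exotic_GFPA}*{Lemma 4.12} already provides an untwisted Morita equivalence via $\delta_\F$. To upgrade this to an equivalence of twisted coactions it suffices to check $\delta_\F|(\xi) = \twu_\pn \cdot (\xi \otimes 1)$ for $\xi \in \F^N_c(A)$; this follows by applying $\id \otimes q_N$ to formula \eqref{eq:FormulaCoationOnF(A)}, using the key identity, and recalling that the left action of $A^N_\pn$ on $\F^N_\pn(A)$ factors through $\kappa$. Maximality of $\delta^N_u$ and normality of $\delta^N_r$ then follow from Morita invariance of these properties across trivially twisted equivalences, together with the classical facts that dual coactions on full, respectively reduced, crossed products are maximal, respectively normal.

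For part (iii), the covariance of $(\kappa, \phi)$ for the untwisted $(A^N_\pn, \delta^N_\pn)$ is precisely the defining formula \eqref{eq-deltap}, and twist preservation $(\phi \otimes \id)(\omega_{G/N}) = (\kappa \otimes \id)(\twu_\pn)$ follows from $\phi|_{\contz(G/N)} = \kappa \circ \phi^N$. The integrated form $\kappa \rtimes_\twu \phi$ manifestly intertwines the $\contz(G)$-structures and the $N$-actions, and surjectivity onto $A$ is clear because its image contains the dense subspace $\kappa(A^N_c) \cdot \phi(\contc(G))$. The main obstacle is faithfulness: here I would invoke Lemma~\ref{lem:gen-injectivityCovRep}, reducing the problem to verifying $\ker(\kappa) = \ker(k_{A^N_\pn})$. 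This equality should follow by combining the twisted Morita equivalence of (ii) with the fact that, for the norms $\pn = u, r$, the dual $N$-coaction on $A \rtimes_{\alpha, \pn} N$ enjoys Katayama-type duality, forcing the integrated form to induce an isomorphism at the level of compact operators on $\F^N_\pn(A)$ and hence to be injective.
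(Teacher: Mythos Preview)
Your approach to parts (i) and (ii), and to covariance, twist preservation, equivariance, and surjectivity in (iii), is essentially the same as the paper's: the ``key identity'' you isolate is exactly what the paper proves (by multiplying $(\phi\otimes q_N)(\omega_G)$ against $m\otimes q_N(z)$ with $m\in A^N_c$ and observing that $(\id\otimes q_N)(\omega_G(f\otimes z))$ factors through $G/N$), and the rest follows as you indicate.

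The injectivity argument at the end of (iii), however, is vague and does not go through as written. Invoking ``Katayama-type duality for $\pn=u,r$'' does not directly establish $\ker(\kappa)=\ker(k_{A^N_\pn})$ for a general norm $\pn$, and the appeal to ``an isomorphism at the level of compact operators on $\F^N_\pn(A)$'' is unclear. The paper's argument is simpler and more direct: the reduced fixed-point algebra $A^N_\red$ is, by construction, the closure of $A^N_c$ inside $\M(A)$, so the canonical map $\kappa_\red\colon A^N_\red\to\M(A)$ is faithful. Since $\kappa$ factors as $A^N_\pn\onto A^N_\red\xrightarrow{\kappa_\red}\M(A)$, one gets $\ker(\kappa)=\ker(A^N_\pn\onto A^N_\red)$. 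But part (ii) shows $(A^N_\red,\delta^N_\red)$ is the normalization of $(A^N_\pn,\delta^N_\pn)$, so this kernel is exactly $\ker(k_{A^N_\pn})$, and Lemma~\ref{lem:gen-injectivityCovRep} applies. You should replace your final sentence with this observation.
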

\begin{proof}
To prove (i), we have to verify the conditions in (1) and (2) in Definition~\ref{def:TwistedCoaction} for the homomorphism $\phi^N$ and the coaction $\delta^N_\pn$.
The  condition $\delta^N_\pn\circ\phi^N=\phi^N\otimes 1$ follows from Equation~\eqref{eq-deltap} and the relation
$(f\otimes 1)\omega_G=\omega_G(f\otimes 1)$ for all $f\in\contz(G)$ (remember that $\omega_G\in \M(\contz(G)\otimes \Cst(G))$).

In order to prove the  condition $\delta^N_\pn|(m)=\twu_\pn(m\otimes 1)\twu_\pn^*$ for all $m\in A^N_c$
we choose $f\in C_c(G/N)$ such that $m=\phi(f)m$.
We then compute, for $z\in \Cst(G)$,
$$(\phi\otimes q_N)(\omega_G)(m\otimes q_N(z))=(\phi\otimes \id)(\id\otimes q_N)(\omega_G(f\otimes z))(m\otimes 1)$$
and also
$$(\phi^N\otimes \id)(\omega_{G/N})(m\otimes q_N(z))=(\phi^N\otimes \id)(\omega_{G/N}(f\otimes q_N(z)))(m\otimes 1).$$
Now observe that $(\id\otimes q_N)(\omega_G(f\otimes z))$ is the function in $\contb(G, \Cst(G/N))$
 given by $s\mapsto f(sN)u_{sN}q_N(z)$, which is constant on $N$-cosets and factors to the function  $\omega_{G/N}(f\otimes q_N(z))$
 in $\contc(G/N, C^*(G/N))$.
  Since $\phi\otimes \id$ restricts to $\phi^N\otimes\id$ on
 this space, we conclude that $(\phi\otimes q_N)(\omega_G)(m\otimes q_N(z))=(\phi^N\otimes \id)(\omega_{G/N})(m\otimes q_N(z))$ for all
 $m\in A_c^N$ and $z\in C^*(G)$, which then implies that $(\phi\otimes q_N)(\omega_G)=(\phi^N\otimes \id)(\omega_{G/N})$ in  $\M(A^N_\pn\otimes \Cst(G/N))$.
 Therefore
\begin{multline*}
\delta^N_\pn|(m)=(\phi\otimes q_N)(\omega_G)(m\otimes 1)(\phi\otimes  q_N)(\omega^*)\\
=(\phi^N\otimes \id)(\omega_{G/N})(m\otimes 1)(\phi^N\otimes  \id)(\omega_{G/N}^*)=\twu_\pn(m\otimes 1)\twu_\pn^*.
\end{multline*}
Therefore $(\delta^N_\pn,\twu_\pn)$ is a twisted action of $(G,G/N)$ on $A^N_\pn$.
Moreover, the same argument just used to verify axiom (1) in Definition~\ref{def:TwistedCoaction} yields, for all $\xi\in \F_c(A)$,
$$\delta_\F|(\xi)=(\id\otimes q_N)\delta_\F(\xi)=(\phi\otimes q_N)(\omega_G)(\xi\otimes 1)=(\phi^N\otimes \id)(\omega_{G/N})(\xi\otimes 1)=\twu_\pn(\xi\otimes 1)$$
which is saying that $(\F,\delta_\F)$ implements the desired Morita equivalence between $(A^N_\pn,\delta^N_\pn,\twu_\pn)$ and $(A\rtimes_{\alpha,\pn}N,\Inf\dualalpha,1)$.
This proves the first assertion in (ii) and the second assertion follows from the fact that maximality and normality of coactions  are preserved by Morita equivalence and by inflation of coactions
 (see \cite{Echterhoff-Kaliszewski-Quigg:Maximal_Coactions}*{Proposition~3.5}, \cite{Kaliszewski-Quigg:Mansfield}*{Proposition~7.3} and
 \cite{Echterhoff-Kaliszewski-Quigg-Raeburn:Categorical}*{Lemma~3.19}).

Finally, to prove (iii) we first observe that $(\kappa,\phi)$ is a covariant representation of $(A^N_\pn,\delta^N_\pn)$, that is, that
$$(\kappa\otimes\id)(\delta^N_\pn(a))=(\phi\otimes\id)(\omega_G)(\kappa(a)\otimes 1)(\phi\otimes\id)(\omega_G^*)$$
 for all $a\in A^G_\pn$. Of course, it suffices to verify this equation for $a\in A^G_c$ and then it follows directly from formula~\eqref{eq-deltap}. Therefore $(\kappa,\phi)$ is a covariant representation of $(A^N_\pn,\delta^N_\pn)$ into $\M(A)$ and an argument similar to that given in the proof of \cite[Lemma~3.10(2)]{Quigg:Landstad_duality} (replacing $G$ by $N$ where appropriate)
shows that the image of $\kappa\rtimes\phi$ is $A$, so that we may view $\kappa\rtimes\phi$ as a surjective \Star{}homomorphism from $A^N_\pn\rtimes_{\delta^N_\pn}\dualG$ onto $A$.
Moreover, it is easy to see that $\kappa\rtimes\phi$ commutes with the $N$- and $\contz(G)$-actions.
Since $\phi|_{\contz(G/N)}=\kappa\circ\phi^N$, it follows from Remark~\ref{rem:PreservationOfTwist} that the covariant representation $(\kappa,\phi)$ preserves the twist
$\twu_\pn=\phi^N\otimes\id(\omega_{G/N})$.
We  need to show that the  $G\rtimes N$-equivariant \Star{}homomorphism
$$\kappa\rtimes_{\twu_\pn}\phi:A_\pn^N\times_{\delta^N_\pn,\twu_\pn}\dualG\onto A$$ is injective. But this follows from Lemma \ref{lem:gen-injectivityCovRep}
and the fact that $\kappa:A_\pn^N\to \M(A)$ factors through a faithful map $\kappa_\red:A_\red^N\to \M(A)$, hence $\ker\kappa$ coincides with the kernel
of the normalization morphism $A_\pn^N\onto A_\red^N$.
\end{proof}

In what follows next we want to show that,  conversely, every twisted coaction $(\delta,\twu)$ is  of the kind as in
Theorem~\ref{theo:LandstadTwistedTheorem1} for the weak $G\rtimes N$-algebra  $(A,\alpha,\phi)$
with
$$A=B\rtimes_{\delta,\twu}\dual G,\quad \alpha=\dual\delta^\twu, \quad\text{and}\quad \phi=k_{C_0(G/N)}.$$
In order to prepare the result, we show

\begin{lemma}\label{lem-inducedcoact}
Suppose that $(\delta,\twu)$ is a twisted coaction of $(G,G/N)$ on the \cstar{}algebra $B$.
Let $\|\cdot\|_\pn$ denote either the full crossed-product norm $\|\cdot\|_u$ or the reduced crossed-product norm
$\|\cdot\|_\red$ for crossed products by $N$ and $G$.
Then the cosystems
$$\big((B\rtimes_{\delta}\dualG)_\pn^G, \delta_\pn^G\big)\quad\text{and}\quad \big((B\rtimes_{\delta,\twu}\dualG)_\pn^N, \delta_\pn^N\big)$$
are isomorphic.
 The isomorphism maps an element $b$ of the inductive limit dense subalgebra $B_c\cong j_B(B_c)$ of
$(B\rtimes_{\delta}\dualG)_c^G$ to the element $k_B(b)$ in the inductive limit dense subalgebra $k_B(B_c)\subseteq (B\rtimes_{\delta,\twu}\dualG)_c^N$,
where $B_c=\delta_{A_c(G)}(B)\subseteq B$ (compare with Remark \ref{rem-Bc}).
In particular, we get $(B\rtimes_{\delta}\dualG)_\red^G=k_B(B)\subseteq \M(B\rtimes_{\delta,\twu}\dualG)$.
\end{lemma}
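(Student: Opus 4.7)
The plan is to compose two isomorphisms: the Quigg--Raeburn identification of $B\rtimes_\delta\dualG$ with an induced algebra, and the imprimitivity-type passage from Proposition~\ref{prop:IndPreservesFix}. By Remark~\ref{rem-chi-iso}, the map $\chi$ of Proposition~\ref{QR-Cocrossedproduct=IndAlg} is an isomorphism of weakly proper $G\rtimes G$-algebras
\[
(B\rtimes_\delta\dualG,\, j_G,\, \widehat{\delta})\;\cong\;\big(\Ind_N^G(B\rtimes_{\delta,\twu}\dualG),\, \tilde k_G,\, \Ind\dual\delta^\twu\big),
\]
where the right-hand side carries the $G\rtimes G$-structure of Remark~\ref{rem:SpecialStructure} (viewing $B\rtimes_{\delta,\twu}\dualG$ as a weakly proper $G\rtimes N$-algebra with structure map $k_G$ and $N$-action $\dual\delta^\twu$). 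Taking $G$-fixed-point algebras yields
\[
(B\rtimes_\delta\dualG)_\pn^G\;\cong\;\big(\Ind_N^G(B\rtimes_{\delta,\twu}\dualG)\big)_\pn^G.
\]

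Next I would apply Proposition~\ref{prop:IndPreservesFix} with $H=N$ and $A=B\rtimes_{\delta,\twu}\dualG$ (in the form of Remark~\ref{rem:SpecialStructure}) to obtain a canonical isomorphism $(B\rtimes_{\delta,\twu}\dualG)_\pn^N \cong (\Ind_N^G(B\rtimes_{\delta,\twu}\dualG))_\pn^G$, sending $m\in (B\rtimes_{\delta,\twu}\dualG)_c^N$ to the constant function $t\mapsto m$. Inverting and composing gives the desired identification $(B\rtimes_\delta\dualG)_\pn^G\cong (B\rtimes_{\delta,\twu}\dualG)_\pn^N$. To track its action on $B_c$, observe that by Remark~\ref{rem-chi-iso} the map $\chi$ sends $j_B(b)$ to the constant function $t\mapsto k_B(b)$, which is then sent back to $k_B(b)$ by the inverse of the Proposition~\ref{prop:IndPreservesFix} isomorphism; hence $j_B(b)\mapsto k_B(b)$, and inductive-limit density of $j_B(B_c)$ inside $(B\rtimes_\delta\dualG)_c^G$ (Remark~\ref{rem-Bc}) transports to that of $k_B(B_c)$ inside $(B\rtimes_{\delta,\twu}\dualG)_c^N$.

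Compatibility with the $G$-coactions is built in: by formula~\eqref{eq-deltap}, both $\delta_\pn^G$ and $\delta_\pn^N$ are implemented by conjugation with $(\phi\otimes\id)(\omega_G)$ using the respective $C_0(G)$-structure maps $j_G$ and $k_G$, and the composite isomorphism matches $j_G$ with $k_G$ via the intermediate identification $j_G(f)\leftrightarrow \tilde k_G(f)\leftrightarrow k_G(f)$ for constant-valued multipliers. The most delicate bookkeeping is checking that the $G\rtimes G$-structure on $\Ind_N^G(B\rtimes_{\delta,\twu}\dualG)$ transported through $\chi$ agrees with the one prescribed by Remark~\ref{rem:SpecialStructure} under which Proposition~\ref{prop:IndPreservesFix} applies; this is immediate on comparing the explicit formula for $\psi$ in Remark~\ref{rem:SpecialStructure} with the formula for $\tilde k_G$ in Remark~\ref{rem-chi-iso}, and I expect this to be the only step requiring care.

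For the final assertion, take $\pn=\red$. By Theorem~\ref{theo:LandstadTwistedTheorem1}(iii), the canonical map $\kappa\colon (B\rtimes_{\delta,\twu}\dualG)_\red^N\to \M(B\rtimes_{\delta,\twu}\dualG)$ is injective, so the reduced fixed-point algebra coincides with the norm closure in $\M(B\rtimes_{\delta,\twu}\dualG)$ of its inductive-limit dense subalgebra $k_B(B_c)$. Since $k_B$ factors through the normalization $B\onto B_\red$ (indeed $\ker k_B=\ker j_B$, noted earlier) and $B_\red$ is norm-closed in the subspace norm from $\M(B\rtimes_{\delta,\twu}\dualG)$, this closure equals $k_B(B)$, giving $(B\rtimes_\delta\dualG)_\red^G=k_B(B)\subseteq \M(B\rtimes_{\delta,\twu}\dualG)$ as required.
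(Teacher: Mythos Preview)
Your overall architecture matches the paper's: compose the Quigg--Raeburn isomorphism $\chi$ with the fixed-point isomorphism of Proposition~\ref{prop:IndPreservesFix}, and track $B_c$ through both. The treatment of the final assertion is also fine.

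The gap is in your paragraph on coaction compatibility. You assert that the composite isomorphism ``matches $j_G$ with $k_G$ via the intermediate identification $j_G(f)\leftrightarrow \tilde k_G(f)\leftrightarrow k_G(f)$ for constant-valued multipliers.'' The first arrow is correct (Remark~\ref{rem-chi-iso}), but the second is not: $\tilde k_G(f)$ is the function $t\mapsto k_G(\tau_{t^{-1}}(f))$, which is \emph{not} constant, so it does not correspond to $k_G(f)$ under the constant-function embedding of Proposition~\ref{prop:IndPreservesFix}. Consequently $(\tilde k_G\otimes\id)(\omega_G)|_t=(k_G\otimes\id)(\omega_G)(1\otimes u_{t^{-1}})$, not $(k_G\otimes\id)(\omega_G)$, and your argument that $\delta_\pn^G$ and $\delta_\pn^N$ are ``the same conjugation'' breaks down as stated.

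What actually saves the day---and what the paper computes explicitly---is that when you conjugate the constant function $\psi(m)\otimes 1$ by $(\tilde k_G\otimes\id)(\omega_G)$, the extra factors $1\otimes u_{t^{-1}}$ and $1\otimes u_t$ commute past $\psi(m)\otimes 1$ and cancel, leaving precisely $\delta_\pn^N(m)$ at every $t\in G$. This is a short computation, but it is the content of the equivariance claim and cannot be declared ``built in.'' Insert this computation (or the equivalent observation about the cancellation of the $u_{t^{\pm1}}$ factors) and your proof is complete.
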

\begin{proof}
Write $A\defeq B\rtimes_{\delta,\omega}\dualG$.
By Proposition~\ref{QR-Cocrossedproduct=IndAlg}, we have $\Ind_N^G(A)\cong B\rtimes_\delta\dualG$ as weak $G\rtimes G$-algebras.
Then \cite[Theorem~4.6]{Buss-Echterhoff:Exotic_GFPA} implies that $\big((\Ind_N^G(A))^G_u,\delta^G_u\big)$ is the
maximalization and $\big((\Ind_N^G(A))^G_\red,\delta^G_\red\big)$ is the normalization of $(B,\delta)$.
The coactions  $\delta^G_\pn$ for $\pn=u,\red$ are given by the formula (as in Equation~\eqref{eq-deltap}):
\begin{equation}\label{eq:coactionInd-formula}
\delta^G_\pn(x)=(\tilde\phi\otimes\id)(\omega_G)(x\otimes 1)(\tilde\phi\otimes\id)(\omega_G^*)\quad\mbox{for all }x\in (\Ind_N^G(A))^G_c,
\end{equation}
where $\tilde\phi$ is the structural homomorphism $\contz(G)\to \M(\Ind_N^G(A))$, which is given by $\tilde\phi(f)|_t=\phi(\tau_{t^{-1}}(f))$
(with $\tau_t(f)|_s=f(st)$) acting on an element $F\in \Ind_N^GA$ by pointwise multiplication.
Now, Proposition~\ref{prop:IndPreservesFix} yields a canonical isomorphism $\psi\colon A^N_\pn\congto \big(\Ind_N^G(A)\big)^G_\pn$, which
sends $m\in A^N_c$ to the constant function $G\to \M(A)$, $t\mapsto m$, which defines an element of $(\Ind_N^G(A))^G_c\sbe \M(\Ind_N^G(A))^G$.

We now show that $\psi$ is equivariant with respect to the $G$-coactions $\delta^N_\pn$ on $A^N_\pn$
and $\delta^G_\pn$ on $(\Ind_N^G(A))^G_\pn$, that is, $\delta^G_\pn\circ\psi=(\psi\otimes\id)\circ\delta^N_\pn$, so that $\psi$ becomes an isomorphism of $\dualG$-algebras.
To prove this, recall that $\delta^N_\pn$ is given as in Equation~\ref{eq-deltap} by the formula
$$\delta^N_\pn(m)=(\phi\otimes\id)(\omega_G)(m\otimes 1)(\phi\otimes\id)(\omega_G^*)\quad\mbox{for all }m\in A^N_c.$$
As explained in \cite{Buss-Echterhoff:Exotic_GFPA}*{Remark~4.14}, the right hand side of this equation is, a priori, an element of $\M(A\otimes C^*(G))$, but can be interpreted as
an element of $(A\otimes C^*(G))^N_c\into A^N_\pn\otimes C^*(G)$. A similar interpretation is used for $\delta^G_\pn$ in~\eqref{eq:coactionInd-formula}.
Now we observe that the isomorphism $\psi\otimes \id\colon A^N_\pn\otimes C^*(G)\congto (\Ind_N^G(A))^G_\pn\otimes C^*(G)$
sends an element $x\in (A\otimes C^*(G))^N_c$ to the constant function $t\mapsto x$ from $G$ to $\M(A\otimes C^*(G))$ viewed as an element of
$\M\big((\Ind_N^G(A))^G_\pn\otimes C^*(G)\big)$. We will apply this to $x=\delta^N_\pn(m)$. We need to show that the element
$\delta^G_\pn(\psi(m))\in \M(\Ind_N^G(A)\otimes C^*(G))$ is sent via the canonical  inclusion $\M(\Ind_N^G(A)\otimes C^*(G))\into \M(\contz(G,A\otimes C^*(G)))$
to the constant function $t\mapsto \delta^N_\pn(m)$ from $G$ to $\M(A\otimes C^*(G))$. But given $t\in G$, observe that $(\tilde\phi\otimes\id)(\omega)|_t=(\phi\otimes\id)(\omega_G)(1\otimes u_{t^{-1}})$, where $t\mapsto u_t$ denotes the universal representation of $G$ into $\M(C^*(G))$ (remember that $\omega_G(s)=u_s$). Therefore,
\begin{align*}
    \delta^G_\pn(\psi(m))|_t&=(\tilde\phi\otimes\id)(\omega_G)(\psi(m)\otimes 1)(\tilde\phi\otimes\id)(\omega_G^*)|_t\\
        &=(\phi\otimes\id)(\omega_G)(1\otimes u_{t^{-1}})(\psi(m)\otimes 1)(1\otimes u_{t})(\phi\otimes\id)(\omega_G^*)\\
        &=(\phi\otimes\id)(\omega_G)(\psi(m)\otimes 1)(\phi\otimes\id)(\omega_G^*)=\delta^N_\pn(m).
\end{align*}
This proves that $\psi$ is $\dualG$-equivariant.
Finally, it follows from
Remark \ref{rem-chi-iso} and the description of the inclusion of $A_c^N$ into $(\Ind_N^G(A))^G_c$ via constant functions
that the isomorphism $A_\pn^N\cong (\Ind_N^GA)_\pn^G\cong (B\rtimes_{\delta}\dualG)_\pn^G$ maps
$k_B(B_c)$  bijectively onto $j_B(B_c)\subseteq (B\rtimes_{\delta}\dualG)_c^G$. This implies the last assertion of the lemma.
\end{proof}

\begin{remark}\label{rem-Bc1}
The above lemma together with Lemma \ref{lem-B0}  imply in particular that $k_B: B_c\to k_B(B_c)$ is an isomorphism
of \Star{}algebras. Hence we may regard $B_c$ as an inductive limit dense subalgebra of $(B\rtimes_{\delta,\twu}\dualG)_c^N$.
In particular, we see that for any crossed-product norm $\|\cdot\|_\pn$ on $C_c(N, B\rtimes_\delta\dualG)$, the
corresponding fixed-point algebra $B_\mu^N:=(B\rtimes_{\delta,\twu}\dualG)_\pn^N$ can be regarded as a completion of $B_c$
with respect to a suitable norm. Moreover, if the chosen norm $\|\cdot\|_\pn$ admits a dual coaction on
$(B\rtimes_{\delta,\twu}\dualG)\rtimes_\pn N$, we obtain a twisted coaction $(\delta_\pn^N, \twu_\pn)$ as
in Theorem~\ref{theo:LandstadTwistedTheorem1}.
\end{remark}

\begin{theorem}\label{theo:pn-twisted-coactions}
Let  $(B,\delta,\twu)$ be a twisted coaction of $(G, G/N)$. Then there exist $(G, G/N)$-equivariant epimorphisms
$$B_u^N\;\stackrel{q_u}{\onto}\; B\;\stackrel{q_{\red}}{\onto} \;B_\red^N$$
given by the identity map on $B_c$ viewed as a dense \Star{}subalgebra of all three algebras such that the
induced morphisms
$$B_u^N\rtimes_{\delta_u^N,\twu_u}\dualG\stackrel{q_u\rtimes\dualG}{\longrightarrow}
B\rtimes_{\delta,\twu}\dualG\stackrel{q_\red\rtimes\dualG}{\longrightarrow}
B_\red^N\rtimes_{\delta_\red^N,\twu_\red}\dualG$$
are isomorphisms of weakly proper $G\rtimes N$-algebras.
Moreover there exists a unique crossed-product norm $\|\cdot\|_\pn$ on $C_c(N, B\rtimes_{\delta,\twu}\dualG)$ which admits a
$(\delta_\pn^N, \twu_\pn)$--$(\delta,\twu)$ equivariant isomorphism $B_\pn^N\cong B$ extending the identity on $B_c$, and then
$\F_\pn^N(B\rtimes_{\delta,\twu}\dualG)$ induces a Morita equivalence between the twisted cosystems
$$(B,\delta,\twu)\quad\text{and}\quad \big((B\rtimes_{\delta,\twu}\dualG)\rtimes_\pn N, \Inf(\widehat{\dual\delta^\twu})_\pn, 1_{G/N}\big).$$
\end{theorem}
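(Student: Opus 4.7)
The plan is to set $A := B\rtimes_{\delta,\twu}\dualG$ with its canonical weak $G\rtimes N$-algebra structure $(\dual\delta^\twu, k_G)$ and transfer all the structure through the identifications provided by Lemma~\ref{lem-inducedcoact}, Remark~\ref{rem-Bc}, and Theorem~\ref{theo:LandstadTwistedTheorem1}. For $\pn \in \{u, r\}$, Lemma~\ref{lem-inducedcoact} gives $\dualG$-equivariant isomorphisms $(B_\pn^N, \delta_\pn^N) = (A_\pn^N, \delta_\pn^N) \cong ((B\rtimes_\delta\dualG)_\pn^G, \delta_\pn^G)$, and by Remark~\ref{rem-Bc} the latter is the maximalization $(B_u, \delta_u)$ or the normalization $(B_r, \delta_r)$ of $(B, \delta)$, all identifications restricting to the identity map on the dense \Star{}subalgebra $B_c$. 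The twisting homomorphism $\phi^N$ from Theorem~\ref{theo:LandstadTwistedTheorem1}(i) applied to $\phi = k_G$, combined with the equality $k_G|_{\contz(G/N)} = k_B \circ \twh$ of Remark~\ref{rem:PreservationOfTwist}, transports to twisting homomorphisms $\twh_u\colon\contz(G/N)\to \M(B_u)$ and $\twh_r\colon\contz(G/N)\to\M(B_r)$ that lift $\twh$. The canonical maximalization/normalization maps $q_u\colon B_u^N\to B$ and $q_r\colon B\to B_r^N$ thus extend the identity on $B_c$, are $\dualG$-equivariant, and intertwine the twisting data; hence they are $(G, G/N)$-equivariant. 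Combining this with Theorem~\ref{theo:LandstadTwistedTheorem1}(iii) applied to both $\pn=u$ and $\pn=r$ yields the asserted isomorphism chain $B_u^N\rtimes_{\delta_u^N, \twu_u}\dualG \cong A \cong B_r^N \rtimes_{\delta_r^N, \twu_r}\dualG$ of weak $G\rtimes N$-algebras.

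For the second part, take $I := \ker(q_u \colon B_u^N \to B)$, a $\delta_u^N$-invariant ideal of $A_u^N = B_u^N$, and transport it through the Morita equivalence $\F_u^N(A)$ of Theorem~\ref{theo:LandstadTwistedTheorem1}(ii) to an ideal $J$ of $A\rtimes_u N$. The equivariance of the bimodule $(\F_u^N(A), \delta_\F)$ with respect to $\delta_u^N$ and $\Inf\dual{\dual\delta^\twu}_u$ forces $J$ to be $\dual{\dual\delta^\twu}_u$-invariant. Define $\|\cdot\|_\pn$ as the quotient norm on $\contc(N, A)$ coming from $(A\rtimes_u N)/J$. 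The inclusion $I \subseteq \ker(B_u^N \to B_r^N)$ yields $\|\cdot\|_r \le \|\cdot\|_\pn \le \|\cdot\|_u$, and the $N$-coaction invariance of $J$ ensures that $\dual{\dual\delta^\twu}$ factors through $A\rtimes_\pn N$; by construction $A_\pn^N \cong B_u/I \cong B$ extending the identity on $B_c$. Uniqueness follows because any crossed-product norm $\pn'$ satisfying $A_{\pn'}^N \cong B$ on $B_c$ induces the same $C^*$-norm on $B_c$, hence (via the $A_{\pn'}^N$-valued inner product) the same completion $\F_{\pn'}^N(A)$ of $\F_c(A)$, and therefore the same norm on $\contc(N,A)$. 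The final Morita equivalence statement is then an immediate instance of Theorem~\ref{theo:LandstadTwistedTheorem1}(ii) applied with this distinguished norm.

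The chief difficulty I foresee lies in the first paragraph: verifying that under the chain of isomorphisms in Lemma~\ref{lem-inducedcoact} and Remark~\ref{rem-Bc}, the twisting homomorphisms produced by Theorem~\ref{theo:LandstadTwistedTheorem1}(i) really correspond to well-defined lifts of $\twh$ to $\M(B_u)$ and $\M(B_r)$, so that $q_u$ and $q_r$ become morphisms of twisted cosystems rather than merely untwisted ones. A secondary technical point in the second paragraph is the coaction-invariance transfer for the Morita correspondence of ideals; this compatibility should be built into the equivariance of $\F_u^N(A)$ asserted in Theorem~\ref{theo:LandstadTwistedTheorem1}(ii), but spelling out the inflation step carefully will require some care.
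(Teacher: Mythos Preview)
Your proposal is correct and follows essentially the same route as the paper. Both arguments set $A=B\rtimes_{\delta,\twu}\dualG$, use Lemma~\ref{lem-inducedcoact} together with Remark~\ref{rem-Bc} to identify $(B_\pn^N,\delta_\pn^N)$ with the maximalization/normalization of $(B,\delta)$, obtain the distinguished norm $\|\cdot\|_\pn$ via the Rieffel correspondence through $\F_u^N(A)$, and verify twist compatibility using the identity $k_G|_{\contz(G/N)}=k_B\circ\twh$ from Remark~\ref{rem:PreservationOfTwist}; the only differences are organizational (you check the twist compatibility for $q_u,q_\red$ up front, whereas the paper postpones it to the end and does it once for the general $\pn$) and that you spell out the Rieffel-correspondence step and the uniqueness argument more explicitly than the paper does.
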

\begin{proof} It follows from item (ii) of Theorem~\ref{theo:LandstadTwistedTheorem1} together with
Lemma \ref{lem-inducedcoact} that $(B_u^N, \delta_u^N)$ is a maximalization of $(B,\delta)$ and $(B_\red^N,\delta_\red^N)$
is a normalization of $(B,\delta)$. Thus it follows from Remark \ref{rem-Bc} that the identity map on $B_c$ induces
$\delta_u^N, \delta, \delta_\red^N$ equivariant epimorphisms $B_u^N\;\stackrel{q_u}{\onto}\; B\;\stackrel{q_{\red}}{\onto} \;B_\red^N$.
By continuity, the composition $q_\red\circ q_u$ extends the identity map on $(B\rtimes_{\delta,\twu}\dualG)_c^G$, hence we
see that $B$ can be obtained as a completion of $B_c^N:=(B\rtimes_{\delta,\twu}\dualG)_c^G$ with respect to a suitable norm
$\|\cdot\|_\nu$. It follows then from the Rieffel-correspondence  applied to the $(G,G/N)$-equivariant
 $B_u^N$--$(B\rtimes_{\delta,\twu}\dualG)\rtimes_uN$ equivalence bimodule $\F_u^N(B\rtimes_{\delta,\twu}\dualG)$
 that there exists a unique crossed-product norm $\|\cdot\|_\pn$ on $C_c(N, B\rtimes_{\delta,\twu}\dualG)$
 which admits a dual coaction $(\widehat{\dual\delta^\twu})_\pn$ of $N$ such that
 $\F_u^N(B\rtimes_{\delta,\twu}\dualG)$ factors through a $\delta$--$\Inf(\widehat{\dual\delta^\twu})_\pn$ equivariant
 $B$--$(B\rtimes_{\delta,\twu}\dualG)\rtimes_\pn N$ equivalence bimodule. Since all bimodule operations
 extend the operations on the dense submodule $\F_c^N(B\rtimes_{\delta,\twu}\dualG)$, it follows that
 $B\cong B_\pn^N$ with isomorphism given via the identity on $B_c$ (or even on $(B\rtimes_{\delta,\twu}\dualG)_c^N$).
 Thus, the theorem will follow from item (iii) of Theorem~\ref{theo:LandstadTwistedTheorem1}
 if we can show that this isomorphism intertwines the twists $\twu_\pn$ and $\twu$. The latter
 will follow if we can show that the corresponding homomorphisms $\zeta_\twu, \zeta_{\twu_\pn}:C_0(G/N)\to \M(B)$ coincide.
 Recall that we regard $B_c$ as a subset of $B_c^N=(B\rtimes_{\delta,\twu}\dualG)_c^N$ via the identification
 $B_c\cong k_B(B_c)\subseteq B_c^N$.
 By item (i) of Theorem~\ref{theo:LandstadTwistedTheorem1} we have $\zeta_{\twu_\pn}(f)k_B(b)=k_G(f)k_B(b)$ for
 all $f\in C_c(G/N)$, $b\in B_c$. On the other hand, since $(k_B, k_G)$ preserves the twist $\twu$, Remark \ref{rem:PreservationOfTwist}
 implies that $k_B(\zeta(f)b)=k_B(\zeta(f))k_B(b)=k_G(f)k_B(b)$, which shows the desired identity.
\end{proof}

As a direct corollary of the last assertion of the theorem we get:

\begin{corollary}[Stabilization trick for arbitrary coactions]
Every twisted coaction $(\delta,\twu)$ of $(G,G/N)$ is Morita equivalence to an inflated
twisted coaction $(\inf \epsilon, 1)$ for some coaction $\epsilon$ of $N$.
\end{corollary}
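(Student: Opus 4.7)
The plan is to read the corollary off the final assertion of Theorem~\ref{theo:pn-twisted-coactions}, which has already packaged all the real work. Given a twisted coaction $(B,\delta,\twu)$ of $(G,G/N)$, that theorem produces a canonical crossed-product norm $\|\cdot\|_\pn$ on $C_c(N, B\rtimes_{\delta,\twu}\dualG)$ admitting a dual $N$-coaction $(\widehat{\dual\delta^\twu})_\pn$ on $C := (B\rtimes_{\delta,\twu}\dualG)\rtimes_\pn N$, and asserts that the Hilbert module $\F_\pn^N(B\rtimes_{\delta,\twu}\dualG)$ implements a Morita equivalence of twisted cosystems between
\[
(B,\delta,\twu)\quad\text{and}\quad \big(C,\;\Inf(\widehat{\dual\delta^\twu})_\pn,\;1_{G/N}\big).
\]
Hence setting $\epsilon := (\widehat{\dual\delta^\twu})_\pn$ gives a genuine $N$-coaction on $C$, and the Morita equivalence just recalled is precisely the one demanded by the corollary. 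No further verification is needed beyond noticing that the right-hand trivially twisted cosystem is, by definition, the inflation of $(C,\epsilon)$.

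It is worth saying a word on what makes this extension of \cite{Echterhoff-Raeburn:The_stabilisation_trick} genuinely non-trivial and where the obstacle would otherwise lie. For amenable $N$ the dual coaction automatically factors through the reduced crossed product, so one can simply take $\epsilon$ to be the reduced dual coaction. For non-amenable $N$ neither the full nor the reduced dual coaction need give the correct descent of $(B,\delta,\twu)$, and one is forced to pass to an exotic crossed-product norm lying between them. Selecting this norm is the substantive step, and it is carried out inside Theorem~\ref{theo:pn-twisted-coactions} via the Rieffel correspondence applied to the $(G,G/N)$-equivariant imprimitivity bimodule $\F_u^N(B\rtimes_{\delta,\twu}\dualG)$. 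Once that has been done the corollary is a bookkeeping statement: the ``hard part'' has already been absorbed upstream, and nothing more is required here.
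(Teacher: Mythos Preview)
Your proof is correct and matches the paper's approach exactly: the paper states the corollary as ``a direct corollary of the last assertion of the theorem'' (Theorem~\ref{theo:pn-twisted-coactions}) without further proof, and you have simply spelled out that deduction by taking $\epsilon = (\widehat{\dual\delta^\twu})_\pn$. Your additional paragraph explaining why the exotic norm is needed for non-amenable $N$ is helpful commentary but not part of the argument itself.
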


\begin{remark}
In \cite{Echterhoff-Raeburn:The_stabilisation_trick} the stabilization trick has been proved for twisted (reduced) coactions of $(G,G/N)$ with $N$ amenable. As remarked in \cite{Kaliszewski-Quigg:Imprimitivity} (see comments before Theorem~5.5 in \cite{Kaliszewski-Quigg:Imprimitivity}), the same ideas carry over to prove a stabilization trick for twisted (full) coactions for arbitrary (non-amenable) $N$ under the assumption that the underlying $G$-coaction is normal. Our result works for all twisted coactions $(\delta,\twu)$.
\end{remark}

Using the above results, we may now generalize the notion of "maximal coactions", "normal coactions", and
"$\pn$-coactions" for a given crossed-product norm $\|\cdot\|_\pn$ on $C_c(G, B\rtimes_{\delta}\dualG)$ as discussed
in \cite{Echterhoff-Kaliszewski-Quigg:Maximal_Coactions, Buss-Echterhoff:Exotic_GFPA, Kaliszewski-Landstad-Quigg:Exotic-coactions} to the category of twisted coactions:

\begin{definition}
Let $(\delta,\twu)$ be a twisted coaction of $(G, G/N)$ on a \cstar{}algebra $B$.
Let $\|\cdot\|_\pn$ be the unique norm on $C_c(N, B\rtimes_{\delta,\twu}\dualG)$
(given by Theorem~\ref{theo:pn-twisted-coactions}) such that $\F_\un^N(B\rtimes_{\delta,\twu}\dualG)$
factors through a $B$--$(B\rtimes_{\delta,\twu}\dualG)\rtimes_\pn N$ Morita equivalence.
We then say that $(\delta,\twu)$ is a  $\pn$-twisted coaction on $B$.
If $\pn=\un$, we say that $(\delta,\twu)$ is a maximal twisted-coaction and if $\pn=\red$, we say that $(\delta,\twu)$
is a normal twisted-coaction.
\end{definition}

\begin{remark}\label{rem-maximalization}
Let $(B,\delta,\twu)$ be a twisted coaction of $(G,G/N)$, and consider the weak $G\rtimes N$-algebra $A=B\rtimes_{\delta,\twu}\dualG$.
Then the twisted coaction $(\delta_u^N,\twu_u)$ on $A^N_\un$ serves as a {\em maximalization} of $(\delta,\twu)$ and $(\delta_\red^N, \twu_\red)$ on $A^N_\red$
serves as a {\em normalization} of $(\delta,\twu)$, while, for an arbitrary crossed-product norm $\|\cdot\|_\pn$ which admits a
dual coaction $(\dual\delta^\twu)_\pn$, $(\delta_\pn^N, \twu_\pn)$ may be regarded as a \emph{$\pn$-ization} of $(\delta,\twu)$.

In this language, $(\delta,\twu)$ is a $\mu$-coaction if and only if $(\delta,\twu)\cong (\delta_\pn^N,\twu_\pn)$.
Thus we see that we get complete twisted analogues of the results obtained in \cite{Buss-Echterhoff:Exotic_GFPA}.
Although we do not develop this here, we remark that it is also possible to obtain an analogue of the categorical
Landstad Duality Theorem  \cite{Buss-Echterhoff:Exotic_GFPA}*{Theorem~7.2} for twisted coactions by using essentially the same ideas as used there.
\end{remark}

The following result follows immediately from item (ii) of Theorem~\ref{theo:LandstadTwistedTheorem1}.

\begin{corollary}
A twisted coaction $(B,\delta,\twu)$ of $(G,G/N)$ is maximal (resp. normal) if and only if the (untwisted) coaction $(B,\delta)$
is a maximal (resp. normal) coaction of $G$. In particular, if $N$ is an amenable closed subgroup of $G$, then
every $G$-coaction $(B,\delta)$ which is twisted over $G/N$ is both maximal and normal.
\end{corollary}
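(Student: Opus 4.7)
The plan is to read off the corollary directly from the definition of maximal/normal twisted coactions just given and from item (ii) of Theorem~\ref{theo:LandstadTwistedTheorem1}, mediated by Theorem~\ref{theo:pn-twisted-coactions}. By that definition, $(B,\delta,\twu)$ is maximal (resp.\ normal) precisely when the unique crossed-product norm $\|\cdot\|_\pn$ on $C_c(N,B\rtimes_{\delta,\twu}\dualG)$ supplied by Theorem~\ref{theo:pn-twisted-coactions} coincides with $\|\cdot\|_\un$ (resp.\ $\|\cdot\|_\red$); equivalently, when the canonical isomorphism of that theorem gives $(B,\delta,\twu)\cong (B_\un^N,\delta_\un^N,\twu_\un)$ (resp.\ $\cong (B_\red^N,\delta_\red^N,\twu_\red)$) via a $(G,G/N)$-equivariant map extending the identity on the dense subalgebra $B_c$.

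First I would handle the forward implication. If $(B,\delta,\twu)$ is maximal, then in particular $(B,\delta)\cong (B_\un^N,\delta_\un^N)$ as $G$-cosystems; but Theorem~\ref{theo:LandstadTwistedTheorem1}(ii) guarantees that $(B_\un^N,\delta_\un^N)$ is a maximal $G$-coaction, so $(B,\delta)$ is maximal. The normal case is the same argument with $\un$ replaced by $\red$, using the second part of Theorem~\ref{theo:LandstadTwistedTheorem1}(ii).

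Next I would prove the converse. Suppose $(B,\delta)$ is maximal. Theorem~\ref{theo:pn-twisted-coactions} provides the $(G,G/N)$-equivariant epimorphism $q_\un\colon B_\un^N\onto B$, equivariant for $\delta_\un^N$ and $\delta$, that extends the identity on $B_c$ and that induces an isomorphism of (untwisted) crossed products $B_\un^N\rtimes_{\delta_\un^N}\dualG\congto B\rtimes_\delta\dualG$. This is exactly the data of a maximalization of $(B,\delta)$, and Theorem~\ref{theo:LandstadTwistedTheorem1}(ii) says $(B_\un^N,\delta_\un^N)$ is itself maximal. By uniqueness of the maximalization, $q_\un$ must then already be an isomorphism of $G$-cosystems. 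Its $(G,G/N)$-equivariance automatically identifies the twists $\twu_\un$ and $\twu$, so $(B,\delta,\twu)\cong (B_\un^N,\delta_\un^N,\twu_\un)$ and the twisted coaction is maximal. The normal case runs identically with $q_\red$ and uniqueness of the normalization.

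For the last assertion, if $N$ is amenable then $A\rtimes_\un N = A\rtimes_\red N$ for every $N$-algebra $A$; applied to $A=B\rtimes_{\delta,\twu}\dualG$, the unique norm $\|\cdot\|_\pn$ from Theorem~\ref{theo:pn-twisted-coactions} is simultaneously $\un$ and $\red$, so $(B,\delta,\twu)$ is both maximal and normal, hence by the equivalence just proved $(B,\delta)$ is as well. The only subtle point in the whole argument is the identification of twists in the converse direction; it will not be an obstacle because the twisting unitary $\twu_\pn=(\phi^N\otimes\id)(\omega_{G/N})$ is determined by the intrinsic structure map $\phi^N$, and $q_\un$ extends the identity on $B_c$ on which $\twu$ was already shown (in the proof of Theorem~\ref{theo:pn-twisted-coactions}) to act through the same covariant data.
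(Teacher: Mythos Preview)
Your proof is correct and follows essentially the same approach the paper indicates (which simply says the result follows immediately from item (ii) of Theorem~\ref{theo:LandstadTwistedTheorem1}); you have spelled out the details via Theorem~\ref{theo:pn-twisted-coactions}, using that $(B_\un^N,\delta_\un^N)$ and $(B_\red^N,\delta_\red^N)$ are the maximalization and normalization of $(B,\delta)$ so that $q_\un$ (resp.\ $q_\red$) is an isomorphism precisely when $(B,\delta)$ is already maximal (resp.\ normal). The only minor quibble is that the isomorphism of \emph{untwisted} crossed products you invoke in the converse direction is not the literal conclusion of Theorem~\ref{theo:pn-twisted-coactions} (which addresses the twisted crossed products) but rather a consequence of $(B_\un^N,\delta_\un^N)$ being a maximalization, established in the proof of that theorem via Lemma~\ref{lem-inducedcoact} and Theorem~\ref{theo:LandstadTwistedTheorem1}(ii).
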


Recall that a \emph{unitary coaction} is a $G$-coaction $(B,\delta)$ which is twisted over $G$ (that is, $N=\{e\}$ is the trivial group in the above notation).
Equivalently, this is the same as a weak $G\rtimes\{e\}$-algebra, that is, a \cstar{}algebra $B$ with a nondegenerate representation $\phi\colon \contz(G)\to \M(B)$.
The $G$-coaction $\delta$ is then recovered by the formula $\delta(b)=(\phi\otimes\id)(\omega_G)(b\otimes 1)(\phi\otimes\id)(\omega_G)^*$.
The above corollary immediately implies the following result (see also \cite{Deicke:Pointwise}*{Proposition~A1}).

\begin{corollary}
Every unitary coaction is maximal and normal.
\end{corollary}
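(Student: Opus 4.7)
The plan is to derive this as an immediate consequence of the previous corollary, which was itself deduced from Theorem~\ref{theo:LandstadTwistedTheorem1}. A unitary coaction is, by definition, a coaction twisted over $G$, i.e., a twisted $(G,G/N)$-coaction in the special case $N = \{e\}$. Since the trivial group is amenable, the final assertion of the preceding corollary applies directly and gives both maximality and normality at once.

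If one prefers a self-contained derivation bypassing the amenability clause, the argument can be run straight through Theorem~\ref{theo:LandstadTwistedTheorem1}. When $N = \{e\}$, a weak $G \rtimes N$-algebra is just a \cstar{}algebra $B$ equipped with a nondegenerate homomorphism $\phi \colon \contz(G) \to \M(B)$, and there is only one crossed-product norm on $\contc(\{e\}, B) = B$ itself, namely $\|\cdot\|_\un = \|\cdot\|_\red$. Consequently the maximalization $(B^{\{e\}}_\un, \delta^{\{e\}}_\un)$ and the normalization $(B^{\{e\}}_\red, \delta^{\{e\}}_\red)$ produced by Theorem~\ref{theo:LandstadTwistedTheorem1}(ii) literally coincide; by Theorem~\ref{theo:pn-twisted-coactions} this common object is isomorphic to $(B,\delta)$, so $(B,\delta)$ is simultaneously (isomorphic to) a maximalization and a normalization of itself, hence both maximal and normal.

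No real obstacle is expected here: both routes reduce to observing that the case $N = \{e\}$ collapses the distinction between the universal and reduced crossed-product norms on $N$, which is what drives the gap between maximal and normal coactions in the general theory. The only thing worth spelling out carefully is the identification of a unitary coaction with a weak $G \rtimes \{e\}$-algebra via the formula $\delta(b) = (\phi\otimes\id)(\omega_G)(b\otimes 1)(\phi\otimes\id)(\omega_G)^*$, which is already recorded in the text preceding the statement.
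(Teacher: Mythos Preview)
Your first route is exactly the paper's own argument: the corollary is stated as an immediate consequence of the preceding one, applied with $N=\{e\}$ amenable. Your second, self-contained derivation via Theorem~\ref{theo:LandstadTwistedTheorem1}(ii) and Theorem~\ref{theo:pn-twisted-coactions} is also correct and simply unwinds what the amenability clause is doing in this degenerate case, but it is not needed.
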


As already mentioned previously, the following decomposition theorem is well-known (it has been proved by Phillips and Raeburn
in \cite{Phillips.Raeburn:Twisted} for amenable $N$ and reduced coactions. But in \cite{Quigg-Raeburn:Induced}*{Remark~7.12}
Quigg and Raeburn stated that the amenability of $N$ is actually not necessary if one works with full coactions).
As an application of our methods, we now derive an alternative proof for this theorem:

\begin{corollary}[Phillips-Raeburn]
\label{cor:PR-DecompositionTheo}
For an arbitrary $G$-coaction $(B,\delta)$ and a normal closed subgroup $N\sbe G$, there is a canonical isomorphism of weak $G\rtimes N$\nb-algebras:
$$B\rtimes_\delta\dualG\cong (B\rtimes_{\delta|}\dual{G/N})\rtimes_{\tilde\delta,\tilde\twu}\dualG,$$
where $(\tilde\delta,\tilde\twu)$ denotes the twisted $(G,G/N)$-coaction on $B\rtimes_{\delta|}\dual{G/N}$ as described  in
Example~\ref{ex:twisted_coactions} above.
\end{corollary}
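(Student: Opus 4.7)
The plan is to construct the isomorphism directly from the universal properties of both sides, then verify the $G\rtimes N$-equivariance. Write $A := B\rtimes_\delta\dualG$ and $C := B\rtimes_{\delta|}\dual{G/N}$, and denote by $(i_B, i_{G/N})$ the universal covariant representation of $(B, \delta|)$ into $\M(C)$, so that by Example~\ref{ex:twisted_coactions}(2) the twisted coaction on $C$ is determined by $\tilde\delta\circ i_B = (i_B\otimes\id)\circ\delta$ and $\tilde\delta\circ i_{G/N}(f) = i_{G/N}(f)\otimes 1$, with $\tilde\twu = (i_{G/N}\otimes\id)(\omega_{G/N})$.

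First I will apply $\id\otimes q_N$ to the covariance identity $(j_B\otimes\id)\delta(b) = (j_G\otimes\id)(\omega_G)(j_B(b)\otimes 1)(j_G\otimes\id)(\omega_G)^*$ for the universal pair $(j_B, j_G)$ of $A$. Using that $(\id\otimes q_N)(\omega_G)$ equals the image of $\omega_{G/N}$ under the pullback $C_0(G/N)\hookrightarrow C_b(G)$ in the first leg, this shows that $(j_B, j_G|_{C_0(G/N)})$ is covariant for $(B, \delta|)$ in $\M(A)$ and hence integrates to a nondegenerate homomorphism
\[
\Psi := j_B\rtimes j_G|_{C_0(G/N)} \colon C \longrightarrow \M(A).
\]
Next I verify that $(\Psi, j_G)$ is a twisted covariant representation of $(C, \tilde\delta, \tilde\twu)$: for the covariance of $\tilde\delta$, I check the identity on the two families of generators $i_B(b)$ and $i_{G/N}(f)$ separately; on $i_B(b)$ it reduces to the covariance of $(j_B, j_G)$ for $(B, \delta)$, while on $i_{G/N}(f)$ both sides equal $j_G(f)\otimes 1$ because scalar-valued functions on $G$ commute with $\omega_G$ in $\M(C_0(G)\otimes C^*(G))$. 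Preservation of $\tilde\twu$ is immediate from Remark~\ref{rem:PreservationOfTwist}, since $j_G|_{C_0(G/N)} = \Psi\circ i_{G/N}$. Integrating therefore yields a nondegenerate $*$\nb-homomorphism
\[
\tilde\Psi \colon C\rtimes_{\tilde\delta,\tilde\twu}\dualG \longrightarrow A,
\]
whose image contains $\Psi(i_B(b))\,j_G(h) = j_B(b)j_G(h)$ for all $b\in B$ and $h\in C_0(G)$, and so is dense in $A$; hence $\tilde\Psi$ is surjective.

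For injectivity, I will establish a natural bijection between covariant representations of $(B,\delta)$ and twisted covariant representations of $(C,\tilde\delta,\tilde\twu)$. Given $(\pi,\sigma)$ covariant for $(B,\delta)$ into $\M(D)$, the restriction $(\pi,\sigma|_{C_0(G/N)})$ is covariant for $(B,\delta|)$ and integrates to $\tilde\pi := \pi\rtimes(\sigma|_{C_0(G/N)}) \colon C\to\M(D)$; a direct check on the generators of $C$ (using again that $\sigma(f)\otimes 1$ commutes with $\omega_G$ for $f \in C_0(G)$) shows that $(\tilde\pi,\sigma)$ is twisted covariant for $(C,\tilde\delta,\tilde\twu)$. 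Conversely $(\tilde\pi,\tilde\sigma)\mapsto(\tilde\pi\circ i_B,\tilde\sigma)$ provides the inverse assignment. Hence faithful representations of $A$ correspond under $\tilde\Psi$ to faithful representations of $C\rtimes_{\tilde\delta,\tilde\twu}\dualG$, so $\tilde\Psi$ must be injective and is an isomorphism. Finally, the $G\rtimes N$-structures are preserved by construction: $\tilde\Psi$ sends the universal map $k_G \colon C_0(G)\to \M(C\rtimes_{\tilde\delta,\tilde\twu}\dualG)$ to $j_G$, and intertwines the dual $N$-actions because on both sides the $N$-action is implemented by conjugation by the image of the translation unitaries, which corresponds under $\tilde\Psi\circ k_G = j_G$. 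The main obstacle is the generator-by-generator verification of covariance for $\tilde\delta$, which is elementary but requires careful tracking of the interplay between $C_0(G/N)\hookrightarrow C_b(G)$ and $\omega_G$; the remaining steps are formal consequences of the relevant universal properties.
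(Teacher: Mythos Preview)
Your argument is correct and essentially reconstructs the classical Phillips--Raeburn proof: you build the map directly from the universal properties, verifying twisted covariance of $(\Psi,j_G)$ generator by generator, and then deduce bijectivity from the natural bijection between covariant representations of $(B,\delta)$ and twist-preserving covariant representations of $(C,\tilde\delta,\tilde\twu)$. The only point where your phrasing is slightly loose is the injectivity step: rather than ``faithful representations correspond to faithful representations,'' the cleaner statement is that every representation $\rho$ of $C\rtimes_{\tilde\delta,\tilde\twu}\dualG$ factors as $\rho'\circ\tilde\Psi$ for a representation $\rho'$ of $A$ (this is what your bijection actually gives), and taking $\rho$ faithful forces $\ker\tilde\Psi=0$.

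The paper, however, deliberately takes a different route in order to exhibit this result as a corollary of its own machinery. It first proves the isomorphism for the maximalization $(B_\un,\delta_\un)$ and the normalization $(B_\red,\delta_\red)$ separately, by combining Mansfield's theorem (Theorem~\ref{thm:GPFA(BxG)N}), which identifies $(B\rtimes_\delta\dualG)^N_\pn$ with $B_\pn\rtimes_{\delta_\pn|}\dual{G/N}$, with the twisted Landstad duality (Theorem~\ref{theo:LandstadTwistedTheorem1}(iii)), which reconstructs $A$ from $A^N_\pn$. It then sandwiches the arbitrary coaction $(B,\delta)$ between these two via the surjections $B_\un\onto B\onto B_\red$, observing that since the induced surjections at the twisted-crossed-product level compose to an isomorphism, each must itself be an isomorphism. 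Your approach is more elementary and self-contained; the paper's approach is less direct but illustrates that the decomposition theorem is a formal consequence of the fixed-point-algebra framework developed in the earlier sections.
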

\begin{proof}
Let $A$ be the weak $G\rtimes N$-algebra $B\rtimes_\delta\dualG$.
For the crossed-product norms $\pn=\un$ or $\pn=\red$, it follows from
Theorem~\ref{thm:GPFA(BxG)N} that
$A_\pn^N\cong B\rtimes_{\delta_\pn|}\dual{G/N}$ and we leave it as an exercise for the reader to check that
the isomorphism  is equivariant for the twisted coaction $(\delta_\pn^N, \twu_\pn)$ and the decomposition coaction
$(\tilde\delta_\pn,\tilde\twu_\pn)$.
It follows from Theorem~\ref{theo:LandstadTwistedTheorem1}(iii) that we have a natural isomorphism
\begin{equation}\label{eq-isotwist}
\big(B_\pn\rtimes_{\delta_\pn|}\dual{G/N}\big)\rtimes_{\tilde\delta_\pn,\tilde\twu_\pn}\dualG\cong A^N_\pn\rtimes_{\delta^N_\pn,\twu^N_\pn}\dualG\cong A
\end{equation}
 of weak $G\rtimes N$-algebras for $\mu=u$ and $\mu=\red$.
On the other hand, since $(B_\un,\delta_\un)$ is the maximalization and $(B_\red,\delta_\red)$ is the normalization of $(B,\delta)$
there are equivariant surjections $B_u\onto B\onto B_\red$
which therefore induce surjections
$$B_\un\rtimes_{\delta_u|}\dual{G/N}\onto B\rtimes_{\delta|}\dual{G/N}\onto B_\red\rtimes_{\delta_\red|}\dual{G/N}$$
 which are morphisms of $(G,G/N)$-coactions and hence also induce surjections
$$\big(B_\un\rtimes_{\delta_u|}\dual{G/N}\big)\rtimes_{\tilde\delta_\un,\tilde\twu_\un}\dualG\onto \big(B\rtimes_{\delta|}\dual{G/N}\big)\rtimes_{\tilde\delta,\tilde\twu}\dualG\onto
\big(B_\red\rtimes_{\delta_\red|}\dual{G/N}\big)\rtimes_{\tilde\delta_\red,\tilde\twu_\red}\dualG.$$
Moreover, by Equation~\eqref{eq-isotwist}, the composition of the two epimorphisms above is an isomorphism and the first and the third algebra are isomorphic to
$B\rtimes_\delta\dualG$, so  $\big(B\rtimes_{\delta|}\dual{G/N}\big)\rtimes_{\tilde\delta,\tilde\twu}\dualG$
must be also isomorphic to $B\rtimes_\delta\dualG$, as desired.
\end{proof}

We finish with the following consequence of  the Landstad Duality Theorem~\ref{theo:LandstadTwistedTheorem1},
which shows that Mansfield's Imprimitivity Theorem~\ref{thm:GPFA(BxG)N} can be enriched to an equivalence of twisted coactions.
This therefore yields a natural connection between the two main topics of this paper.

\begin{corollary}
Let $(B,\delta)$ be a  maximal coaction of $G$.
Then there is a coaction $\delta_{\F_u^N}$ on  Mansfield's
$B\rtimes_{\delta|}\dual{G/N}$--$B\rtimes_\delta\dualG\rtimes_{\dual\delta|}N$
imprimitivity bimodule $\F_u^N(B\rtimes_\delta\dualG)$
which is compatible with the canonical twisted coactions on both algebras, namely,
the decomposition twisted coaction $(\tilde\delta,\tilde\omega)$ on
$B\rtimes_{\delta|}\dual{G/N}$ and the trivially twisted coaction $(\Inf\dual{\,\dual\delta|\,},1)$ on $B\rtimes_\delta\dualG\rtimes_{\dual\delta|}N$.
In other words, $(\F_u^N(B\rtimes_\delta\dualG),\delta_{\F^N_u})$ is a Morita equivalence of twisted coactions
$$(B\rtimes_{\delta|}\dual{G/N},\tilde\delta,\tilde\omega)\sim (B\rtimes_\delta\dualG\rtimes_{\dual\delta|}N,\Inf\widehat{\,\dual\delta|\,},1).$$
A similar result holds for normal coactions $(B,\delta)$ if we replace the universal crossed products by the reduced crossed products everywhere.
\end{corollary}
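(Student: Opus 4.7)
The plan is to deduce this corollary by applying Theorem~\ref{theo:LandstadTwistedTheorem1} to the weak $G\rtimes N$-algebra $A\defeq B\rtimes_\delta\dualG$, with $N$-action the restricted dual action $\dual\delta|$ and structure map $j_G\colon\contz(G)\to\M(A)$, and by transporting the resulting twisted cosystem along Mansfield's isomorphism of Theorem~\ref{thm:GPFA(BxG)N}.

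First, Theorem~\ref{theo:LandstadTwistedTheorem1}(ii) applied with $\pn=\un$ supplies a $G$-coaction $\delta_\F$ on $\F_\un^N(A)$ implementing a Morita equivalence between the twisted coaction $(A_\un^N,\delta_\un^N,\twu_\un)$ and the trivially twisted coaction $(A\rtimes_{\dual\delta|,\un}N,\Inf\widehat{\dual\delta|}_\un,1)$. I would take $\delta_{\F_\un^N}\defeq \delta_\F$. Since $(B,\delta)$ is assumed maximal, we have $(B_\un,\delta_\un)\cong(B,\delta)$, so Mansfield's Theorem~\ref{thm:GPFA(BxG)N} identifies $A_\un^N=(B\rtimes_\delta\dualG)_\un^N$ with $B\rtimes_{\delta|}\dual{G/N}$.

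The key step is to verify that this Mansfield isomorphism intertwines the Landstad twisted coaction $(\delta_\un^N,\twu_\un)$ coming from Theorem~\ref{theo:LandstadTwistedTheorem1}(i) with the Phillips--Raeburn decomposition twisted coaction $(\tilde\delta,\tilde\twu)$ of Example~\ref{ex:twisted_coactions}(2). On the Mansfield side, chasing through the proof of Theorem~\ref{thm:GPFA(BxG)N}, the isomorphism sends the dense subalgebra spanned by $i_B(b)i_{G/N}(f)$ (with $b\in B$, $f\in\contc(G/N)$) to the image of $j_B(b)j_G(f)$ inside $\M(B\rtimes_\delta\dualG)$, with the Landstad twist $\twu_\un=(j_G^N\otimes\id)(\omega_{G/N})$ being identified with $\tilde\twu=(i_{G/N}\otimes\id)(\omega_{G/N})$ via $j_G^N\leftrightarrow i_{G/N}$, and with the formula~\eqref{eq-deltap} for $\delta_\un^N$ matching the integrated form $((i_B\otimes\id)\circ\delta)\rtimes(i_{G/N}\otimes 1)$ defining $\tilde\delta$. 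This is precisely the compatibility asserted (but left as an exercise) in the proof of Corollary~\ref{cor:PR-DecompositionTheo}, so I would simply record it here as a consequence of the explicit formulas.

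Once this intertwining is in hand, the Morita equivalence provided by $(\F_\un^N(A),\delta_\F)$ from Theorem~\ref{theo:LandstadTwistedTheorem1}(ii) is automatically a Morita equivalence of twisted cosystems between $(B\rtimes_{\delta|}\dual{G/N},\tilde\delta,\tilde\twu)$ and $(B\rtimes_\delta\dualG\rtimes_{\dual\delta|,\un}N,\Inf\widehat{\dual\delta|}_\un,1)$, as claimed. For the normal case, one repeats the argument verbatim with $\pn=\red$ throughout: since $(B,\delta)$ normal means $(B,\delta)\cong(B_\red,\delta_\red)$, Theorem~\ref{thm:GPFA(BxG)N} yields $A_\red^N\cong B\rtimes_{\delta|}\dual{G/N}$, and Theorem~\ref{theo:LandstadTwistedTheorem1}(ii) produces the corresponding reduced Morita equivalence via $\F_\red^N(A)$. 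The main (and only real) obstacle is the equivariance check in the previous paragraph; this is essentially bookkeeping because the two twists both come from $\contz(G/N)$ acting in the canonical way on the fixed-point algebra, but it requires careful tracking of how the Landstad structure map $\phi^N$ and the Phillips--Raeburn twisting homomorphism $i_{G/N}$ are matched under the Mansfield identification.
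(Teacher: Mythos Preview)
Your proposal is correct and follows essentially the same route as the paper: apply Theorem~\ref{theo:LandstadTwistedTheorem1}(ii) to $A=B\rtimes_\delta\dualG$, then transport the resulting twisted cosystem along the Mansfield isomorphism of Theorem~\ref{thm:GPFA(BxG)N}, invoking the equivariance check already noted in the proof of Corollary~\ref{cor:PR-DecompositionTheo}. The paper's proof is simply a two-sentence compression of exactly this argument.
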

\begin{proof}
This follows directly from Theorem~\ref{theo:LandstadTwistedTheorem1}(ii) (applied to $A=B\rtimes_\delta\dualG$) and the fact (already observed in the proof Corollary~\ref{cor:PR-DecompositionTheo}) that the decomposition twisted coaction $(\tilde\delta,\tilde\omega)$ corresponds to the twisted coaction $(\delta^N_\pn,\omega_\pn)$ under the canonical isomorphism $B\rtimes_{\delta|}\dual{G/N}\cong (B\rtimes_\delta\dualG)^N_\pn$.
\end{proof}

\begin{remark}
We should remark that for maximal coactions, the equivalence
$$(B\rtimes_{\delta|}\dual{G/N},\tilde\delta)\sim (B\rtimes_\delta\dualG\rtimes_{\dual\delta|}N,\Inf\widehat{\,\dual\delta|\,})$$
is one of the main results of \cite{Kaliszewski-Quigg:Mansfield}, where it has been also proved that $(B,\delta)\mapsto (\F^N_\un(B\rtimes_\delta\dualG),\delta_{\F^N_\un})$ may be
interpreted as an equivalence between the crossed-product functors $(B,\delta)\mapsto (B\rtimes_{\delta|}\dual{G/N},\tilde\delta)$ and $(B,\delta)\mapsto (B\rtimes_\delta\dualG\rtimes_{\dual\delta|}N,\Inf\widehat{\,\dual\delta|\,})$ if we restrict to maximal coactions $(B,\delta)$. Our result shows that the natural twists involved match up,
so that  $\F^N_\un(B\rtimes_\delta\dualG)$ may be  viewed also as an equivalence between the functors $(B,\delta)\mapsto (B\rtimes_{\delta|}\dual{G/N},\tilde\delta,\tilde\twu)$ and $(B,\delta)\mapsto (B\rtimes_\delta\dualG\rtimes_{\dual\delta|}N,\Inf\widehat{\,\dual\delta|\,},1)$. Moreover, it  follows from our Proposition~\ref{prop:FixingN} that $\F^N_\un(B\rtimes_\delta\dualG)$
 carries a $G$-action which is compatible with the natural twisted actions of $(G,N)$ on the left and right coefficient algebras, namely, the inflation $\Inf\dual{\delta|}$ of the dual $G/N$-action on $B\rtimes_{\delta|}\dual{G/N}$ (viewed as a trivially twisted action of $(G,N)$), and the decomposition twisted action $(\tilde{\dual\delta|},\iota_N)$ on $B\rtimes_\delta\dualG\rtimes_{\dual\delta|}N$. Therefore $\F^N_\un(B\rtimes\dualG)$ also provides an equivalence between the functors $(B,\delta)\mapsto (B\rtimes_{\delta|}\dual{G/N},\Inf\dual{\delta|},1)$ and $(B,\delta)\mapsto (B\rtimes_\delta\dualG\rtimes_{\dual\delta|}N,\tilde{\dual\delta|},\iota_N)$.

An analogue of these equivalences follows also for normal coactions $(B,\delta)$ via the bimodule $\F^N_\red(B\rtimes_\delta\dualG)$. This
case has  been  shown before in \cite[Theorem 4.21]{Echterhoff-Kaliszewski-Quigg-Raeburn:Categorical}; see also \cite{Kaliszewski-Quigg-Raeburn:ProperActionsDuality}.
\end{remark}

\begin{bibdiv}
 \begin{biblist}
\bib{Buss-Echterhoff:Exotic_GFPA}{article}{
  author={Buss, Alcides},
  author={Echterhoff, Siegfried},
  title={Universal and exotic generalized fixed-point algebras for weakly proper actions and duality},
  status={eprint},
  note={\arxiv {1304.5697}},
  date={2013},
}

\bib{Buss-Echterhoff:Imprimitivity}{article}{
  author={Buss, Alcides},
  author={Echterhoff, Siegfried},
  title={Imprimitivity theorems for weakly proper actions of locally compact groups},
  status={eprint},
  note={\arxiv {1305.5100}},
  date={2013},
}

\bib{Chabert-Echterhoff:Twisted}{article}{
  author={Chabert, J\'er\^ome},
  author={Echterhoff, Siegfried},
  title={Twisted equivariant $KK$-theory and the Baum--Connes conjecture for group extensions},
  journal={$K$\nobreakdash -Theory},
  volume={23},
  date={2001},
  number={2},
  pages={157--200},
  issn={0920-3036},
  review={\MRref {1857079}{2002m:19003}},
  doi={10.1023/A:1017916521415},
}

\bib{Deicke:Pointwise}{article}{
  author={Deicke, Klaus},
  title={Pointwise unitary coactions on {$C^*$}-algebras with continuous trace},
  journal={J. Operator Theory},
  volume={43},
  year={2000},
  number={2},
  pages={295--327},
  issn={0379-4024},
  mrclass={46L55 (46L05)},
  review={\MRref {1753413}{2001d:46098}},
}

\bib{Echterhoff:Morita_twisted}{article}{
  author={Echterhoff, Siegfried},
  title={Morita equivalent twisted actions and a new version of the Packer--Raeburn stabilization trick},
  journal={J. London Math. Soc. (2)},
  volume={50},
  date={1994},
  number={1},
  pages={170--186},
  issn={0024-6107},
  review={\MRref {1277761}{96a:46118}},
  doi={10.1112/jlms/50.1.170},
}

\bib{Echterhoff-Kaliszewski-Raeburn:Crossed_products_dual}{article}{
  author={Echterhoff, Siegfried},
  author={Kaliszewski, Steven P.},
  author={Raeburn, Iain},
  title={Crossed products by dual coactions of groups and homogeneous spaces},
  journal={J. Operator Theory},
  volume={39},
  year={1998},
  number={1},
  pages={151--176},
  issn={0379-4024},
  review={\MRref {1610318}{99h:46124}},
}

\bib{Echterhoff-Kaliszewski-Quigg:Maximal_Coactions}{article}{
  author={Echterhoff, Siegfried},
  author={Kaliszewski, Steven P.},
  author={Quigg, John},
  title={Maximal coactions},
  journal={Internat. J. Math.},
  volume={15},
  year={2004},
  number={1},
  pages={47--61},
  issn={0129-167X},
  doi={10.1142/S0129167X04002107},
  review={\MRref {2039211}{2004j:46087}},
}

\bib{Echterhoff-Kaliszewski-Quigg-Raeburn:Categorical}{article}{
  author={Echterhoff, Siegfried},
  author={Kaliszewski, Steven P.},
  author={Quigg, John},
  author={Raeburn, Iain},
  title={A categorical approach to imprimitivity theorems for $C^*$\nobreakdash -dynamical systems},
  journal={Mem. Amer. Math. Soc.},
  volume={180},
  date={2006},
  number={850},
  pages={viii+169},
  issn={0065-9266},
  review={\MRref {2203930}{2007m:46107}},
  doi={10.1090/memo/0850},
}

\bib{Echterhoff-Raeburn:The_stabilisation_trick}{article}{
  author={Echterhoff, Siegfried},
  author={Raeburn, Iain},
  title={The stabilisation trick for coactions},
  journal={J. Reine Angew. Math.},
  volume={470},
  year={1996},
  pages={181--215},
  issn={0075-4102},
  review={\MRref {1370212}{98c:46142}},
  doi={10.1515/crll.1996.470.181},
}

\bib{Exel:Unconditional}{article}{
  author={Exel, Ruy},
  title={Unconditional integrability for dual actions},
  journal={Bol. Soc. Brasil. Mat. (N.S.)},
  volume={30},
  number={1},
  date={1999},
  pages={99--124},
  issn={0100-3569},
  review={\MRref {1686980}{2000f:46071}},
  doi={10.1007/BF01235677},
}

\bib{Green:Local_twisted}{article}{
  author={Green, Philip},
  title={The local structure of twisted covariance algebras},
  journal={Acta Math.},
  volume={140},
  date={1978},
  number={3-4},
  pages={191--250},
  issn={0001-5962},
  review={\MRref {0493349}{58\,\#12376}},
  doi={10.1007/BF02392308},
}

\bib{anHuef-Raeburn:Mansfield}{article}{
  author={an Huef, Astrid},
  author={Raeburn, Iain},
  title={Mansfield's imprimitivity theorem for arbitrary closed subgroups},
  journal={Proc. Amer. Math. Soc.},
  volume={132},
  year={2004},
  number={4},
  pages={1153--1162},
  issn={0002-9939},
  doi={10.1090/S0002-9939-03-07189-2},
  review={\MRref {2045432}{2005b:46116}},
}

\bib{Huef-Kaliszewski-Raeburn-Williams:Naturality_Rieffel}{article}{
  author={an Huef, Astrid},
  author={Kaliszewski, Steven P.},
  author={Raeburn, Iain},
  author={Williams, Dana P.},
  title={Naturality of Rieffel's Morita equivalence for proper actions},
  journal={Algebr. Represent. Theory},
  volume={14},
  date={2011},
  number={3},
  pages={515--543},
  issn={1386-923X},
  review={\MRref {2785921}{}},
  doi={10.1007/s10468-009-9201-2},
}

\bib{Huef-Kaliszewski-Raeburn-Williams:Fixed}{article}{
  author={an Huef, Astrid},
  author={Kaliszewski, Steven P.},
  author={Raeburn, Iain},
  author={Williams, Dana P.},
  title={Fixed-point algebras for proper actions and crossed products by homogeneous spaces},
  journal={Illinois J. Math.},
  volume={55},
  date={2011},
  number={1},
  pages={205--236 (2012)},
  issn={0019-2082},
  review={\MRref {3006686}{}},
  eprint={http://projecteuclid.org/euclid.ijm/1355927034},
}

\bib{Kaliszewski-Landstad-Quigg:Exotic}{article}{
  author={Kaliszewski, Steven P.},
  author={Landstad, Magnus B.},
  author={Quigg, John},
  title={Exotic group C*-algebras in noncommutative duality},
  status={preprint},
  date={2012},
  note={\arxiv {1211.4982}},
}

\bib{Kaliszewski-Landstad-Quigg:Exotic-coactions}{article}{
  author={Kaliszewski, Steven P.},
  author={Landstad, Magnus B.},
  author={Quigg, John},
  title={Exotic coactions},
  status={preprint},
  date={2013},
  note={\arxiv {1305.5489}},
}

\bib{Kaliszewski-Quigg:Imprimitivity}{article}{
  author={Kaliszewski, Steven P.},
  author={Quigg, John},
  title={Imprimitivity for $C^*$\nobreakdash -coactions of non-amenable groups},
  journal={Math. Proc. Cambridge Philos. Soc.},
  volume={123},
  year={1998},
  number={1},
  pages={101--118},
  issn={0305-0041},
  doi={10.1017/S0305004197001692},
  review={\MRref {1474869}{99a:46118}},
}

\bib{Kaliszewski-Quigg:Mansfield}{article}{
  author={Kaliszewski, Steven P.},
  author={Quigg, John},
  title={Mansfield's imprimitivity theorem for full crossed products},
  journal={Trans. Amer. Math. Soc.},
  volume={357},
  year={2005},
  number={5},
  pages={2021--2042},
  issn={0002-9947},
  doi={10.1090/S0002-9947-04-03683-9},
  review={\MRref {2115089}{2006e:46076}},
}

\bib{Kaliszewski-Quigg-Raeburn:ProperActionsDuality}{article}{
  author={Kaliszewski, Steven P.},
  author={Quigg, John},
  author={Raeburn, Iain},
  title={Proper actions, fixed-point algebras and naturality in nonabelian duality},
  journal={J. Funct. Anal.},
  volume={254},
  date={2008},
  number={12},
  pages={2949--2968},
  doi={10.1016/j.jfa.2008.03.010},
  review={\MRref{2418615}{2010a:46160}},
}

\bib{Katayama:Takesaki_Duality}{article}{
  author={Katayama, Yoshikazu},
  title={Takesaki's duality for a nondegenerate co-action},
  journal={Math. Scand.},
  volume={55},
  date={1984},
  number={1},
  pages={141--151},
  issn={0025-5521},
  review={\MRref {769030}{86b:46112}},
  eprint={http://www.mscand.dk/article.php?id=2740},
}

\bib{Kirchberg-Wassermann:permanence}{article}{
  author={Kirchberg, Eberhard},
  author={Wassermann, Simon},
  title={Permanence properties of {$C\sp *$}-exact groups},
  journal={Doc. Math.},
  volume={4},
  year={1999},
  pages={513--558 (electronic)},
  issn={1431-0635},
  mrclass={46L05 (22D05 46L80)},
  review={\MRref {1725812}{2001i:46089}},
}

\bib{Mansfield:Induced}{article}{
  author={Mansfield, Kevin},
  title={Induced representations of crossed products by coactions},
  journal={J. Funct. Anal.},
  volume={97},
  year={1991},
  number={1},
  pages={112--161},
  issn={0022-1236},
  doi={10.1016/0022-1236(91)90018-Z},
  review={\MRref {1105657}{92h:46095}},
}

\bib{Phillips.Raeburn:Twisted}{article}{
  author={Phillips, John},
  author={Raeburn, Iain},
  title={Twisted crossed products by coactions},
  journal={J. Austral. Math. Soc. Ser. A},
  volume={56},
  date={1994},
  number={3},
  pages={320--344},
  review={\MRref {1271525}{95e:46079}},
}

\bib{Quigg:Landstad_duality}{article}{
  author={Quigg, John C.},
  title={Landstad duality for $C^*$\nobreakdash -coactions},
  journal={Math. Scand.},
  volume={71},
  year={1992},
  number={2},
  pages={277--294},
  issn={0025-5521},
  review={\MRref {1212711}{94e:46119}},
  eprint={http://www.mscand.dk/article.php?id=956},
}

\bib{Quigg:FullAndReducedCoactions}{article}{
  author={Quigg, John C.},
  title={Full and reduced $C^*$\nobreakdash -coactions},
  journal={Math. Proc. Cambridge Philos. Soc.},
  volume={116},
  date={1994},
  number={3},
  pages={435--450},
  issn={0305-0041},
  review={\MRref {1291751}{95g:46126}},
  doi={10.1017/S0305004100072728},
}

\bib{Quigg-Raeburn:Induced}{article}{
  author={Quigg, John},
  author={Raeburn, Iain},
  title={Induced $C^*$\nobreakdash -algebras and Landstad duality for twisted coactions},
  journal={Trans. Amer. Math. Soc.},
  volume={347},
  year={1995},
  number={8},
  pages={2885--2915},
  issn={0002-9947},
  doi={10.2307/2154760},
  review={\MRref {1297536}{95j:46080}},
}

\bib{Rieffel:Proper}{article}{
  author={Rieffel, Marc A.},
  title={Proper actions of groups on $C^*$\nobreakdash-algebras},
  conference={
    title={Mappings of operator algebras},
    address={Philadelphia, PA},
    date={1988},
  },
  book={
    series={Progr. Math.},
    volume={84},
    publisher={Birkh\"auser Boston},
    place={Boston, MA},
  },
  date={1990},
  pages={141--182},
  review={\MRref{1103376}{92i:46079}},
}

\bib{Rieffel:Integrable_proper}{article}{
  author={Rieffel, Marc A.},
  title={Integrable and proper actions on $C^*$\nobreakdash-algebras, and square-integrable representations of groups},
  journal={Expo. Math.},
  volume={22},
  date={2004},
  number={1},
  pages={1--53},
  issn={0723-0869},
  review={\MRref{2166968}{2006g:46108}},
}

\bib{Williams:crossed-products}{book}{
  author={Williams, Dana P.},
  title={Crossed products of $C^*$-algebras},
  series={Mathematical Surveys and Monographs},
  volume={134},
  publisher={American Mathematical Society},
  address={Providence, RI},
  year={2007},
  pages={xvi+528},
  isbn={978-0-8218-4242-3; 0-8218-4242-0},
  review={\MRref {2288954}{2007m:46003}},
}
 \end{biblist}
\end{bibdiv}

\end{document}